\renewcommand\MR[1]{\relax\ifhmode\unskip\spacefactor3000
\space\fi \MRhref{#1}{#1}}
\renewcommand{\MRhref}[2]%
{\href{http://www.ams.org/mathscinet-getitem?mr=#1}{MR #2}}
\newtheorem{theorem}{Theorem}[section]
\newtheorem{lemma}[theorem]{Lemma}
\newtheorem{proposition}[theorem]{Proposition}
\newtheorem{corollary}[theorem]{Corollary}
\newtheorem{claim}[theorem]{Claim}
\theoremstyle{definition}
\theoremstyle{remark}
\numberwithin{equation}{section}
\newcounter{constant}
\newcommand{\newconstant}[1]{\refstepcounter{constant}\label{#1}}
\newcommand{\useconstant}[1]{c_{\textnormal{\tiny \ref{#1}}}}
\renewcommand{\chi}{\alpha}
\def\laweq{\overset{\mathrm{law}}=}
\def\Max{{\mathrm{max}}}
\def\RI{\mathrm{RI}}
\def\Var{\mathop{\mathrm{Var}}\nolimits}
\def\diam{\mathop{\mathrm{diam}}\nolimits}
\def\dist{\mathop{\mathrm{dist}}\nolimits}
\def\Cap{\mathop{\mathrm{cap}}\nolimits}
\def\d{\mathrm{d}}
\def\bbone{\boldsymbol 1 }
\def\<{\langle}
\def\>{\rangle}
\def\PZ{P^{\mathbb Z^d}}
\def\EZ{E^{\mathbb Z^d}}
\def\vec#1{{\boldsymbol #1}}
\begin{document}
\title{Random walks on torus and random interlacements: macroscopic coupling and phase transition}

  \author[J. Černý]{Jiří Černý}
  \address{Jiří Černý,\newline
    Faculty of Mathematics, University of Vienna,\newline
  Oskar-Morgenstern-Platz 1, 1090 Vienna, Austria}
  \email{jiri.cerny@univie.ac.at}

  \author[A. Teixeira]{Augusto Teixeira}
  \address{Augusto Teixeira, \newline Instituto Nacional de Matem\'atica Pura e Aplicada --
  IMPA,\newline Estrada Dona Castorina 110, 22460-320, Rio de Janeiro,
  Brazil}
  \email{augusto@impa.br}

  \date{\today}

\begin{abstract}
  For $d\ge 3$ we construct a new coupling of the trace left by a random
  walk on a large $d$-dimensional discrete torus with the random
  interlacements on $\mathbb{Z}^d$. This coupling has the advantage of
  working up to \emph{macroscopic} subsets of the torus. As an
  application, we show a sharp phase transition for the diameter of the
  component of the vacant set on the torus containing a given point. The threshold where
  this phase transition takes place coincides with the critical value
  $u_\star(d)$ of random interlacements on $\mathbb Z^d$. Our main tool
  is a variant of the \emph{soft-local time} coupling technique of
  \cite{PT12}.
\end{abstract}

\maketitle
\section{Introduction}
\label{s:introduction}

In this paper we study the trace of a simple random walk $X_n$ on a large $d$-dimensional discrete torus $\mathbb T_N^d= (\mathbb Z/N\mathbb Z)^d$ for $d \geq 3$.
In particular, we investigate the percolative properties of its vacant set
\begin{equation}
  \label{e:VNu}
  \mathcal V_N^u
  = \mathbb T_N^d \setminus \{X_0,\dots,X_{\lfloor uN^d \rfloor}\},
\end{equation}
for a fixed $u\in[0,\infty)$ as $N$ tends to infinity.

Intuitively speaking, the parameter $u$ plays the role of a density of
the random walk trace. More precisely, for small values of $u$ and as $N$
grows, the vacant set occupies a large proportion of the torus.
Therefore, $\mathcal{V}^u_N$ should consists of a single large cluster
together with small finite components. In contrast, for large values of
$u$, the asymptotic density of $\mathcal{V}^u_N$ should be small and it
should have been fragmented into small pieces.

In analogy with the Bernoulli percolation behavior, it is actually
expected that there is a phase transition. Namely, there is a critical
value $u_c(d)$ such that the first behavior holds true for all $u<u_c(d)$
and the second for all $u>u_c(d)$, with high probability as $N$ tends to
infinity.

The percolative properties of $\mathcal V^u_N$ have been studied in
several recent works. In \cite{BS08}, the authors showed that, for large
dimensions $d$ and small enough $u > 0$, the vacant set has a (unique, to
  some extent) connected component with a non-negligible density. In
order to understand the vacant set $\mathcal V^u_N$ more in detail,
Sznitman introduced in \cite{Szn10} a model of random interlacements,
which can be viewed as an analogue of the random walk trace in the torus,
but constructed on the infinite lattice $\mathbb{Z}^d$. In
\cite{Szn10,SS09}, it was then shown that the vacant set of random
interlacements exhibit a percolation phase transition at some level
$u_\star(d)\in (0,\infty)$. It is believed that the critical threshold of
the torus, $u_c(d)$ coincides with $u_\star(d)$.

Later, in \cite{Win08}, it was established that as $N$ grows, the set
$\mathcal{V}_N^u$ converges locally in law to the vacant set of random
interlacements $\mathcal{V}^u$, but this didn't have immediate
consequences on the percolative behavior of the $\mathcal V_N^u$. In
\cite{TW11}, a more quantified control of $\mathcal V^u_N$ in terms of
$\mathcal V^u$ improved our understanding of the behavior of the largest
connected component $\mathcal C^\Max_{u,N}$ of $\mathcal V_N^u$. In
particular, it was shown that, for any dimension $d \geq 3$, with high
probability as $N$ goes to infinity:
\begin{itemize}
  \item for $u$ small enough, there is $\varepsilon >0$ such that
  \begin{equation*}
    |\mathcal C_{u,N}^\Max|\ge \varepsilon N^d,
  \end{equation*}
  \item for $u>u_\star(d)$,
  \begin{equation*}
    |\mathcal C_{u,N}^\Max|=o(N^d),
  \end{equation*}
  \item for $u$ large enough, for some $\lambda(u) >0$
  \begin{equation*}
    |\mathcal C_{u,N}^\Max|=O(\log^\lambda N).
  \end{equation*}
\end{itemize}
Note that this implies the existence of a certain transition in the
asymptotic behavior of $\mathcal{V}^u_N$ as $u$ varies. However it was
not known until now where this transition occurs, whether it is sharp, or
whether it is related to the model of random interlacements. The results
of this paper shed more light on this question.

Unfortunately, we are not able to control directly the volume of the
largest connected component $\mathcal C_{u,N}^\Max$. We thus define another
observable that is better suited to our analysis. To this end
we let $P$ to stand for the law of the simple random walk $(X_n)_{n\ge 0}$
on $\mathbb T_N^d$ started from its invariant distribution (which is
  uniform on $\mathbb T_N^d$), and write $\mathcal C_N(u)$ for the
connected component of $\mathcal V_N^u$ containing some given point, say
$0\in \mathbb T_N^d$. We define the observable
\begin{equation}
  \eta_N(u) = P[\diam \mathcal C_N(u) \ge N/4],
\end{equation}
where the diameter is understood in the Euclidean sense, not in the one
induced by the graph $\mathcal C_N(u)$.

Let us point out that the observable $\eta_N(u)$ is \emph{macroscopic},
that is it depends on the properties of the vacant set $\mathcal V_N^u$
in the box of size comparable with $N$.

The next theorem establishes a phase transition for this observable and
gives its asymptotic behavior in terms of related quantities for random
interlacements.

\begin{theorem}
  \label{t:phasetransition}
  The observable $\eta_N(u)$ exhibits a phase transition at $u_\star(d)$.
  More precisely, for $u>u_\star (d)$,
  \begin{equation}
    \label{e:phasetransitionsubcritical}
    \lim_{N\to\infty} \eta_N(u)=0,
  \end{equation}
  and for $u < u_\star(d)$,
  \begin{equation}
    \label{e:phasetransitionsupercritical}
    \lim_{N\to\infty} \eta_N(u)=\eta (u)>0,
  \end{equation}
  where $\eta (u)$ is the probability that $0\in \mathbb Z^d$ is
  contained in the infinite component of the vacant set $\mathcal V^u$ of
  random interlacements at level $u$.
\end{theorem}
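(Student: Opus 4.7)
The plan is to deduce the theorem from the main technical result of the paper, namely a coupling of the vacant set $\mathcal{V}_N^u$ on the torus with the vacant set $\mathcal{V}^u$ of random interlacements on $\mathbb{Z}^d$, valid up to \emph{macroscopic} Euclidean balls. Concretely, the plan is to use a coupling with the following property: for any fixed $\varepsilon>0$ one constructs on a common probability space the torus vacant set at level $u$ together with two random interlacement vacant sets at levels $u\pm\varepsilon$, in such a way that with probability $1-o_N(1)$ the sprinkled inclusion
\[
\mathcal{V}^{u+\varepsilon}\cap B_N \;\subseteq\; \mathcal{V}_N^u\cap B_N \;\subseteq\; \mathcal{V}^{u-\varepsilon}\cap B_N
\]
holds, where $B_N$ is the Euclidean ball of radius $N/2$ around the origin, viewed simultaneously as a subset of $\mathbb{T}_N^d$ and of $\mathbb{Z}^d$.

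Granted such a coupling, introduce the auxiliary quantity $\eta^{(L)}(v)$, defined as the probability that the component of $0$ in $\mathcal{V}^v$ has Euclidean diameter at least $L$. This is non-increasing in $v$ and satisfies $\eta^{(L)}(v)\downarrow \eta(v)$ as $L\to\infty$. Since the event $\{\diam \mathcal{C}_N(u)\ge N/4\}$ depends only on $\mathcal{V}_N^u\cap B_N$, the sandwich above yields, up to an $o_N(1)$ error,
\[
\eta^{(N/4)}(u+\varepsilon)\;-\;o_N(1)\;\le\;\eta_N(u)\;\le\;\eta^{(N/4)}(u-\varepsilon)\;+\;o_N(1).
\]
For $u>u_\star(d)$, choose $\varepsilon$ so that $u-\varepsilon>u_\star(d)$; then $\eta(u-\varepsilon)=0$, hence $\eta^{(N/4)}(u-\varepsilon)\to 0$ as $N\to\infty$ and the upper bound gives \eqref{e:phasetransitionsubcritical}. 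For $u<u_\star(d)$, choose $\varepsilon$ so that $u+\varepsilon<u_\star(d)$ as well, send first $N\to\infty$ and then $\varepsilon\downarrow 0$; combined with continuity of $\eta$ on $(0,u_\star(d))$, a known property of the random interlacements vacant set, this yields \eqref{e:phasetransitionsupercritical}.

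The main obstacle of the whole argument is the construction of the macroscopic coupling itself. The soft local time technique of \cite{PT12} was designed to couple excursions of the torus walk across \emph{mesoscopic} boxes of side $N^{\alpha}$ with $\alpha<1$ with an independent interlacement Poisson process; the per-excursion error is small enough to survive a subpolynomial number of excursions. A ball of radius $N/2$, however, attracts a polynomially large number of such excursions, so a naive accumulation of errors blows up. Overcoming this presumably requires both a refinement of the soft local time error estimates and a multiscale or iterative scheme that exploits the mixing of the torus walk outside the box between consecutive excursion epochs. Once the coupling is in place, the phase transition statement is a short corollary, as sketched above.
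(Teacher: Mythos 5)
Your argument matches the paper's proof: both apply the macroscopic coupling (Theorem~\ref{t:toruscouplingweak}) to sandwich $\eta_N(u)$ between diameter-event probabilities for $\mathcal V^{u\pm\varepsilon}$, then conclude the subcritical case from $\eta(u-\varepsilon)=0$ and the supercritical case from the continuity of $\eta$ on $[0,u_\star)$ (Corollary~1.2 of \cite{Tei09b}), which is exactly what the paper does. One small imprecision worth flagging: the event $\{\diam\mathcal C_N(u)\ge N/4\}$ is not literally measurable with respect to $\mathcal V_N^u\cap B_N$ (the component could escape $B_N$ and realize its diameter elsewhere); what one actually uses is that, once $B_N$ contains a ball of radius $>N/4$ around $0$, the component of $0$ in $\mathcal V_N^u\cap B_N$ already has diameter $\ge N/4$ whenever $\mathcal C_N(u)$ does, which is the inequality needed for the sandwich, and the paper glosses over this point in the same way.
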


The main ingredient of the proof of Theorem~\ref{t:phasetransition} is a
new coupling between $\mathcal V_N^u$ and $\mathcal V_u$ in macroscopic
boxes of the torus which is of independent interest. This is stated
precisely in the following result.

\begin{theorem}
\label{t:toruscouplingweak}
Let $\mathcal B_N = [0, (1-\delta )N]^d$ for some $\delta >0$. Then for
every $u \ge 0$ and $\varepsilon >0$ there exist couplings $\mathbb Q_N$
of the random walk on $\mathbb T_N^d$ with the random interlacements such
that
\begin{equation}
  \lim_{N\to\infty}
  \mathbb Q_N\big[(\mathcal V^{u(1+\varepsilon )}\cap \mathcal B_N)
    \subset (\mathcal V_N^u \cap \mathcal B_N)
    \subset (\mathcal V^{u(1-\varepsilon )}\cap \mathcal B_N)\big] = 1.
\end{equation}
\end{theorem}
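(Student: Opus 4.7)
The plan is to express both $\mathcal V_N^u \cap \mathcal B_N$ and $\mathcal V^u \cap \mathcal B_N$ as functionals of sequences of excursions into a slightly enlarged concentric box, and then to couple these two sequences via a variant of the soft-local time technique of \cite{PT12}. Fix auxiliary macroscopic concentric boxes $\mathcal B_N \subset A_N \subset B_N \subsetneq \mathbb T_N^d$, pairwise separated by macroscopic gaps. Let $(Z_k)_{k \ge 1}$ be the successive entrance points in $\partial A_N$ of the excursions of the torus walk from $\partial A_N$ to $\partial B_N$, and let $N_N(u)$ denote the number of such excursions initiated before time $\lfloor uN^d \rfloor$. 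The trace $\mathcal V_N^u \cap \mathcal B_N$ is a measurable function of the first $N_N(u)$ excursions (the entrance points $Z_k$ together with the corresponding finite trajectories). By the standard excursion decomposition of random interlacements, $\mathcal V^u \cap \mathcal B_N$ admits a structurally identical description in which $N_N(u)$ is replaced by a $\mathrm{Poisson}(u\,\Cap(A_N))$ random variable and the $Z_k$ are replaced by i.i.d.\ samples from the normalized equilibrium measure $\tilde e_{A_N}$ on $\partial A_N$.

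I would then proceed in two main steps. First, establish that $N_N(u)$ is concentrated near $u\,\Cap(A_N)$ with relative error smaller than $\varepsilon/2$; this reduces to Green-function estimates on the torus which, for macroscopic sets, are comparable to those on $\mathbb Z^d$ up to controllable corrections. Second, and more delicately, show that the conditional law of $Z_{k+1}$ given $(Z_1,\dots,Z_k)$ and the intermediate trajectory has Radon-Nikodym derivative with respect to $\tilde e_{A_N}$ uniformly between $1-\varepsilon'$ and $1+\varepsilon'$, with $\varepsilon'$ arbitrarily small as $N \to \infty$. This estimate is precisely what the soft-local time machinery requires in order to couple $(Z_k)$ with an i.i.d.\ sequence $(\widetilde Z_k)$ drawn from $\tilde e_{A_N}$ so that, on an event of probability tending to $1$, the first $(1-\varepsilon/2)M$ entries of $(\widetilde Z_k)$ are a subset of the first $M$ entries of $(Z_k)$, and conversely the first $M$ entries of $(Z_k)$ are contained in the first $(1+\varepsilon/2)M$ entries of a slightly longer i.i.d.\ sequence. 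Combined with the concentration of $N_N(u)$ and a standard Poisson coupling of excursion counts, this yields the two inclusions of Theorem~\ref{t:toruscouplingweak} after the sprinkling $u \mapsto u(1\pm \varepsilon)$ absorbs all slack.

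The main obstacle is the Radon-Nikodym bound in the second step. Earlier couplings, notably those of \cite{TW11}, worked for mesoscopic target boxes, where a short mixing time fully erases the dependence of the re-entrance law on the previous exit location before the walk returns. Here the sets $A_N$, $B_N$ and their complement are all of diameter of order $N$, so no clean separation of scales is available, and one must compare directly the hitting distribution of $\partial A_N$ for the walk launched from any point of $\partial B_N$, or from a distribution on $\mathbb T_N^d \setminus B_N$, with $\tilde e_{A_N}$. The natural route is a sharp comparison of the torus Green function with the $\mathbb Z^d$ Green function at macroscopic distances, combined with the capacity-based representation of hitting measures on $A_N$; pushing this comparison to a uniform pointwise estimate on $\partial A_N$ is where the novel input of the paper should appear.
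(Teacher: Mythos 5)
Your proposal identifies the right obstacle but proposes a route that cannot work, and the paper circumvents that obstacle in a fundamentally different way. Your crucial second step requires the conditional law of the next entrance point $Z_{k+1}$, given the excursion history, to have Radon--Nikodym derivative with respect to the normalized equilibrium measure uniformly in $(1-\varepsilon', 1+\varepsilon')$, with $\varepsilon' \to 0$. This is precisely the hypothesis under which the original soft-local-time estimates of \cite{PT12} and the coupling of \cite{TW11} deliver the conclusion, and you correctly note that it holds only when the scales separate (mesoscopic target inside macroscopic torus). But for a macroscopic annular gap this statement is simply \emph{false}: the re-entrance distribution from a given exit point on the outer boundary is concentrated on a small portion of the inner boundary near that exit point, and no pointwise Green-function comparison (torus vs.\ $\mathbb Z^d$) will make it look uniformly like the equilibrium measure. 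The paper's own Lemma~\ref{l:hittingprobas} quantifies this: from $x\in\partial\Delta$, the hitting measure of $\partial B$ places mass of order $N^{-\gamma(d-1)}$ on only about $N^{\gamma(d-1)}$ of the $\asymp N^{d-1}$ boundary points, which is nowhere near $\bar e_B$. So the ``sharp Green-function comparison'' you point to as the expected novelty would not supply the bound your argument requires.

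The paper's actual workaround has two components that are absent from your outline. First, the annular buffer between the inner (rounded) box $B_N$ and the outer set $\Delta_N$ is taken \emph{mesoscopic}, of width $N^\gamma$ with $\gamma\in(1/(d-1),1)$, not macroscopic; the excursions are encoded by a Markov chain on pairs $(\text{entrance},\text{exit})\in\partial B\times\partial\Delta$, and the chain then has mixing time of order $N^{1-\gamma}$, which is small compared with the number of excursions $\asymp N^{d-1-\gamma}$. Second, and this is the real novelty, the soft-local-time error is no longer controlled by a one-step Radon--Nikodym bound; instead Theorem~\ref{t:couplingmc} bounds it in terms of the chain's mixing time and the $\pi$-variance of the transition density $\rho_z$, using a Chernov-type concentration inequality for additive functionals of Markov chains (proved in the appendix). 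The soft local time $G_n(z) = \xi_0\nu(z)/\mu(z) + \sum_i \xi_i\rho_z(Z_{i-1})$ is treated as such an additive functional, and concentration is obtained even though individual transition kernels are far from equilibrium. To make your proposal work you would need to replace your step two with something of this type; as written, that step has no valid proof and the argument does not go through.
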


We give a more quantitative version of this theorem later (see
  Theorem~\ref{t:toruscoupling}). Observe again that the box
$\mathcal B_N$ is macroscopic, and that $|\mathcal B_N|/N^d$ can be made
arbitrarily close to one. Theorem~\ref{t:toruscouplingweak} thus improves
considerably the best previously known coupling of the same objects
working with boxes of size $N^{1-\varepsilon}$, see \cite{TW11} (cf.~also
  \cite{Bel13} for another related coupling).

The principal tool for the construction of the above coupling is a
streamlined version of the technique of \emph{soft local times}, which
was recently developed in \cite{PT12} in order to prove new decorrelation
inequalities for random interlacements. This technique allows  to couple two
Markov chains so that their ranges almost coincide. Our formulation,
stated as Theorem~\ref{t:couplingmc} below, provides more explicit bounds
on the probability that the coupling fails, and more importantly, it is well
adapted to situations where one can estimate the mixing time of the
chains in question. See introduction to Section~\ref{s:coupling} for more
details.

Let us now briefly describe the organization of this paper. In
Section~\ref{s:notation} we introduce some basic notation and recall
several useful known results. In Section~\ref{s:coupling}, we extend the
soft local times method and prove our main technical result on the
coupling of ranges of Markov chains. The precise version of
Theorem~\ref{t:toruscouplingweak} giving a coupling between the random
walk on $\mathbb{T}^d_N$ and the vacant set of random interlacements is
stated in Theorem~\ref{t:toruscoupling} in Section~\ref{s:toruscoupling}.
Sections \ref{s:rwproperties}--\ref{s:number} provide estimates on the
simple random walk, equilibrium measures, mixing times and the number of
excursions of the walker which are needed in order to apply the results
of Section~\ref{s:coupling}. Finally, Section~\ref{s:proofs} contains the
proofs of our main results. In the appendix we include a suitable version
of classic Chernov bounds on the concentration of additive functionals of
Markov chains.

\section{Notation and some results}
\label{s:notation}

Let us first introduce some basic notation to be used in the sequel. We
consider torus $\mathbb T_N^d=(\mathbb Z^d/N\mathbb Z^d)$ which we
identify, for sake of concreteness, with the set
$\{0,\dots,N-1\}^d \subset \mathbb Z^d$. On $\mathbb Z^d$, we
respectively denote by $|\,\cdot\,|$ and $|\,\cdot\,|_\infty$ the
Euclidean and $\ell^\infty$-norms. For any $x\in\mathbb Z^d$ and $r\ge 0$,
we let $B(x,r)=\{y\in \mathbb Z^d:|y-x|\le r\}$ stand for the Euclidean
ball centered at $x$ with radius $r$. Given $K,U\subset \mathbb Z^d$,
$K^c=\mathbb Z^d\setminus K$ stands for the complement of $K$ in
$\mathbb Z^d$ and $\dist(K,U)=\inf\{|x-y|:x\in K, y\in U\}$ for the
Euclidean distance of $K$ and $U$. Finally, we define the inner boundary
of $K$ to be the set $\partial K=\{x\in K:\exists y\in K^c, |y-x|=1\}$,
and the outer boundary of $K$ as $\partial_e K= \partial (K^c)$.
Analogous notation is used on $\mathbb T_N^d$.

We endow $\mathbb Z^d$ and $\mathbb T_N^d$ with the nearest-neighbor
graph structure. We write $P_x$ for the law on
$(\mathbb T_N^d)^{\mathbb N}$ of the canonical simple random walk on
$\mathbb T_N^d$ started $x\in \mathbb T_N^d$, and denote the canonical
coordinate process by $X_n$, $n\ge 0$. We use $P$ to denote the law of
the random walk with a uniformly chosen starting point, that is
$P=\sum_{x\in \mathbb T_N^d} N^{-d}P_x$. We write $P_x^{\mathbb Z^d}$ for
the canonical law of the simple random walk on $\mathbb Z^d$ started from
$x$, and (with slight abuse of notation) $X_n$ for the coordinate
process as well. Finally, $\theta_k$ denotes the canonical shifts of the
walk, defined on either $(\mathbb T_N^d)^{\mathbb N}$ or
$(\mathbb Z^d)^{\mathbb N}$,
\begin{equation}
  \theta_k (x_0,x_1,\dots) = (x_k,x_{k+1},\dots).
\end{equation}

Throughout the text we denote by $c$ positive finite constants whose
value might change during the computations, and which may depend on the
dimension $d$. Starting from Section~\ref{s:rwproperties}, the constants
may additionally depend on $\gamma$, $\chi$ which we will introduce later
(this will be mentioned again when appropriate). Given two sequences
$a_N, b_N$, we write $a_N \asymp b_N$ to mean that
$c^{-1} a_N \leq b_N \leq c a_N$, for some constant $c\ge 1$.

For $K\subset \mathbb Z^d$ finite, as well as for $K\subset \mathbb T_N^d$,
we use $H_K$, $\tilde H_K$ to denote entrance and hitting times of $K$
\begin{equation}
  H_K=\inf\{k\ge 0: X_k\in K\}, \qquad
  \tilde H_K=\inf\{k\ge 1: X_k\in K\}.
\end{equation}
For $K\subset \mathbb Z^d$ we define the equilibrium measure of $K$ by
\begin{equation}
  e_K(x) = P_x^{\mathbb Z^d}[\tilde H_K = \infty] \bbone\{x\in K\}, \qquad
  x\in \mathbb Z^d,
\end{equation}
and the capacity of $K$
\begin{equation}
  \Cap(K) = e_K(K).
\end{equation}
For every finite $K$, $\Cap(K) < \infty$, which allows to introduce the
normalized equilibrium measure
\begin{equation}
  \label{e:bare}
  \bar e_K(\cdot) = (\Cap(K))^{-1} e_K(\cdot).
\end{equation}

Finally, we give an explicit construction of the vacant set of random
interlacements intersected with a finite set $K\subset \mathbb Z^d$. We
build on some auxiliary probability space an i.i.d.~sequence $X^{(i)}$,
$i\ge 1$, of simple random walks on $\mathbb Z^d$ with the initial
distribution $\bar e_K$, and an independent Poisson process
$(J_u)_{u\ge 0}$ with intensity $\Cap(K)$. The vacant set of the random
interlacements (viewed as a process in $u\ge 0$) when intersected with $K$
has the law characterized by
\begin{equation}
  \label{e:ridef}
  (\mathcal V^u\cap K)_{u\ge 0} \laweq \Big(K\setminus \bigcup_{1\le i \le
    J_u} \bigcup_{k\ge 0}  \{X^{(i)}_k\}\Big)_{u\ge 0},
\end{equation}
see, for instance, Proposition 1.3 and below (1.42) in \cite{Szn10}.

\section{Coupling the ranges of Markov chains}
\label{s:coupling}

In this section we
construct a coupling of two Markov chains so that their ranges almost
coincide. A method to construct such couplings was recently introduced in
\cite{PT12}, based on the so-called \emph{soft local times}. We will use
the same method to construct the coupling, but propose a new method to
estimate the probability that the coupling fails.

This is necessary since the estimates in \cite{PT12} use considerably the
fact that the Markov chains in consideration have `very strong renewals'.
More precisely the trajectory of the chain can easily be decomposed into
i.i.d.~blocks (of possibly random length). This, together with bounds on
the moment generating function corresponding to one block, allows them to
obtain very good bounds on the error of the coupling, that is on the
probability that the ranges of the Markov chains are considerably different.

In the present paper, we have in mind an application where this `very
strong renewal' structure is not present. We hence need to find new
estimates on the error of the coupling. These techniques combine the
method of soft local times with quantitative Chernov-type estimates on
deviations of additive functionals of Markov chains. An estimate of this
type suitable for our purposes is proved in the appendix.

Similarly as in \cite{PT12}, we will use the regularity of the transition
probabilities of the Markov chain to improve the bounds on the error of
the coupling. In contrast to \cite{PT12} this regularity will be not
expressed via comparing the transition probability with indicator
functions of large balls (see Theorem~4.9 of \cite{PT12}), but by
controlling the variance of the transition probability.

Note also that the estimates on the error of the coupling provided by
Theorems~\ref{t:coupling}, \ref{t:couplingmc} are weaker than the ones
obtained by techniques of \cite{PT12}, when both techniques apply. This
is due to the fact that the Chernov-type estimates mentioned above give
the worst case asymptotic and are not-optimal in many situations.

\medskip

Let us now precise the setting of this section. Let $\Sigma $ be a finite
state space, $P=(p(x,y))_{x,y\in \Sigma }$ a Markov transition matrix,
and $\nu $ a distribution on $\Sigma $. We assume that $P$ is
irreducible, so there exists a unique $P$-invariant distribution $\pi $
on $\Sigma $. The mixing time $T$ corresponding to $P$ is defined by
\begin{equation}
  T=\min\big\{n\ge 0:\max_{x\in \Sigma }\|P^n(x,\cdot)-\pi
  (\cdot)\|_{TV}\big\}\le
  \frac 14.
\end{equation}
where $\|\cdot\|_{TV}$ denotes the total variation distance
$\lVert \nu - \nu' \rVert_{TV} := (1/2) \sum_x |\nu(x) - \nu'(x)|$. We set
\begin{equation}
  \pi_\star = \min_{z\in \Sigma } \pi (z).
\end{equation}

Let $\mu $ be an \textit{a priori} measure on $\Sigma $ with full
support. (This measure is introduced for convenience only, it will
  simplify some formulas later. The estimates that we obtain do not
  depend on the choice of $\mu $.) Let $g:\Sigma \to [0,\infty)$ be the
density of $\pi $ with respect of $\mu $,
\begin{equation}
  \label{e:defg}
  g(x)=\frac {\pi (x)}{\mu (x)}, \qquad x\in \Sigma ,
\end{equation}
and let further $\rho :\Sigma^2 \to [0,\infty)$ be the `transition
density' with respect to $\mu $,
\begin{equation}
  \label{e:transdensity}
  \rho(x,y)=\frac{p(x,y)}{\mu (y)}, \qquad x,y\in \Sigma .
\end{equation}
We use $\rho_y$ to denote the function $x \mapsto \rho (x,y)$ giving the
arrival probability density at $y$ as we vary the starting point. For any
function $f:\Sigma \to \mathbb R$, let
$\pi (f) = \sum_{x\in \Sigma } \pi (x) f(x)$, and
$\Var_\pi f = \pi \big((f - \pi (f))^2\big)$.

The following theorem provides a coupling of a Markov chain with
transition matrix $P$ with an i.i.d.~sequence so that their ranges almost
coincide.

\begin{theorem}
  \label{t:coupling}
  There exists a probability space $(\Omega , \mathcal F, \mathbb Q)$
  where one can construct a Markov chain $(Z_i)_{i\ge 0}$ with transition
  matrix $P$ and initial distribution $\nu $ and an i.i.d.~sequence
  $(U_i)_{i\ge 0}$ with marginal $\pi $ such that for any $\varepsilon$
  satisfying
  \begin{equation}
    \label{e:epsas}
    0<\varepsilon \le  \frac 12 \wedge \min_{z\in \Sigma } \frac{\Var_\pi \rho_z}
    {2\|\rho_z\|_\infty g(z)}
  \end{equation}
  and for any $n \ge 2 k(\varepsilon) T$ we have
  \begin{equation}
    \label{e:coupling}
    \mathbb Q\big[\mathcal G(n ,\varepsilon )^c\big]
    \le
    C\sum_{z\in \Sigma } \Big(e^{-c  n\varepsilon ^2} + e^{-c n
      \varepsilon \frac{\pi (z)}{\nu (z)}}+
    \exp\Big\{-\frac{c   \varepsilon ^2   g(z)^2}{\Var_\pi
      \rho_z }\,\frac{n}{k(\varepsilon )T}\Big\}
    \Big),
  \end{equation}
  where $c,C\in (0,\infty)$ are absolute constants,
  $\mathcal G(n,\varepsilon )$ is the `good' event
  \begin{equation}
    \mathcal G =\mathcal G(n,\varepsilon ) =
    \big\{\{U_i\}_{i=0}^{n(1-\varepsilon)} \subset \{Z_i\}_{i=0}^n \subset
      \{U_i\}_{i=0}^{n(1+\varepsilon) } \big\},
  \end{equation}
  and
  \begin{equation}
    k(\varepsilon)=-\min_{z\in \Sigma } \log_2 \frac{\pi_\star
      \varepsilon^2 g(z)^2}{6 \Var_\pi (\rho_z)}.
  \end{equation}
\end{theorem}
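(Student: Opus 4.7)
The proof uses the soft-local-times construction of \cite{PT12}. On a single probability space we put a Poisson point process $\eta$ on $\Sigma \times [0,\infty)$ with intensity $\mu \otimes \mathrm{Leb}$, together with two independent i.i.d.~$\mathrm{Exp}(1)$ sequences $(\xi_i)_{i\ge 1}$ and $(\zeta_i)_{i\ge 1}$. From $\eta$ and $(\xi_i)$ we successively read off the Markov chain $(Z_i)_{i\ge 0}$ with initial law $\nu$, along with its soft local time $G_n(y) = \sum_{i=1}^{n} \xi_i \rho(Z_{i-1}, y)$; from $\eta$ and $(\zeta_i)$ we similarly read off the i.i.d.~$\pi$-sequence $(U_i)_{i\ge 0}$ with soft local time $F_m(y) = \sigma_m g(y)$, where $\sigma_m = \zeta_1 + \dots + \zeta_m$. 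The fundamental property, as in \cite{PT12}, is that the $(\xi_i)$, $(\zeta_i)$ are i.i.d.~$\mathrm{Exp}(1)$ and independent of the respective output sequences, and the range of each sequence equals the projection onto $\Sigma$ of the Poisson points lying below the corresponding curve. Consequently the good event $\mathcal G(n,\varepsilon)$ is implied by the pointwise sandwich $F_{n(1-\varepsilon)}(y) \le G_n(y) \le F_{n(1+\varepsilon)}(y)$ for all $y\in \Sigma$.

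This reduces matters to three concentration statements, one for each term in \eqref{e:coupling}. A classical Chernoff bound for $\sigma_m$ yields $|\sigma_m - m| \le (\varepsilon/4)m$ with probability $\ge 1 - Ce^{-cn\varepsilon^2}$, producing the first term and reducing the task to $G_n(z) \in [(1-\varepsilon/2)ng(z), (1+\varepsilon/2)ng(z)]$ for every fixed $z\in\Sigma$, followed by a union bound over $z$. For fixed $z$, since $(\xi_i)$ is independent of $(Z_i)$, we further split the concentration of $G_n(z) = \sum_i \xi_i \rho(Z_{i-1}, z)$ into (i) concentration of the conditionally-exponential sum around $S_n(z) := \sum_i \rho(Z_{i-1}, z)$, which follows from a standard exponential-moment argument using the weight bound $\|\rho_z\|_\infty$; and (ii) concentration of the Markov-chain additive functional $S_n(z)/g(z)$ around its stationary mean $n$, which is the role of the Chernov-type estimate proved in the appendix. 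The latter produces the third term in \eqref{e:coupling} through a dyadic blocking of the chain at time scales $2^k T$, $k = 0, 1, \dots, k(\varepsilon)$, with rate dictated by $\Var_\pi \rho_z/g(z)^2$. The middle term $e^{-cn\varepsilon\pi(z)/\nu(z)}$ absorbs the bias introduced by the non-stationary start: during the first few mixing times the chain may over-visit $z$ by a factor of order $\nu(z)/\pi(z)$, and a hitting-time computation converts this into the stated exponential bound. The constraints $n \ge 2 k(\varepsilon) T$ and \eqref{e:epsas} keep us throughout in the small-deviation regime where these Gaussian-type rates apply.

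The principal technical obstacle lies in step (ii). In \cite{PT12} strong renewal structure of the chains under study permitted an i.i.d.~block decomposition and a direct moment generating function analysis; no such structure is available here, so the appendix must instead run a Bernstein-type argument on a sub-sampled chain at scale $2^{k(\varepsilon)} T$, exploiting only that the mixing-time hypothesis contracts the total variation distance from $\pi$ by a fixed factor across each block. Matching the resulting rate with the stationary variance $\Var_\pi \rho_z$ and the weight bound $\|\rho_z\|_\infty$ is what fixes both the precise value of $k(\varepsilon)$ and the upper bound imposed on $\varepsilon$ in \eqref{e:epsas}. Coordinating this blocking with the $\mathrm{Exp}(1)$ weights on top of the Markov-chain concentration, and ensuring the union bound over $z\in\Sigma$ does not spoil the constants when $g(z)$ is small or $\Var_\pi\rho_z$ is large, is the delicate point that dictates the three-term structure of \eqref{e:coupling}.
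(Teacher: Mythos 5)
Your high-level account is faithful to the paper's approach: the soft-local-times construction of \cite{PT12}, the reduction of $\mathcal G(n,\varepsilon)$ to the pointwise sandwich of soft local times, and the reliance on the appendix's mixing-time Chernov bound are all exactly what the paper does. However, there is a genuine gap in the way you handle the concentration of $G_n(z)$, and this is precisely the point where the paper uses a small but essential trick that your argument misses.

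You propose to prove $G_n(z)\approx n g(z)$ in two stages: (i) concentrate $G_n(z)=\sum_i \xi_i\rho_z(Z_{i-1})$ conditionally around $S_n(z)=\sum_i\rho_z(Z_{i-1})$ by an exponential-moment bound keyed to $\|\rho_z\|_\infty$, and then (ii) concentrate the Markov additive functional $S_n(z)$ around $ng(z)$ via the appendix. Step (ii) correctly produces the third term of \eqref{e:coupling}. But step (i), run under the weight bound $\|\rho_z\|_\infty$ with conditional variance $\sum_i\rho_z(Z_{i-1})^2 \le \|\rho_z\|_\infty S_n(z)\approx n g(z)\|\rho_z\|_\infty$, yields a deviation probability of order $\exp\{-c\,\varepsilon^2 n\,g(z)/\|\rho_z\|_\infty\}$. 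This is a \emph{fourth} error term, and it does not in general reduce to any of the three terms in \eqref{e:coupling}: since $\|\rho_z\|_\infty\ge g(z)$ it can dominate $e^{-c n\varepsilon^2}$, and under the constraint \eqref{e:epsas} the most one can say is that its exponent is at least $2\varepsilon^3 n g(z)^2/\Var_\pi\rho_z$, which is \emph{smaller} than the exponent $\varepsilon^2 g(z)^2 n/(\Var_\pi\rho_z\,k(\varepsilon)T)$ of the third term as soon as $\varepsilon\, k(\varepsilon)T$ is small. The paper avoids this entirely by a different decomposition: it writes
\begin{equation*}
  G_n(z)=\xi_0\frac{\nu(z)}{\mu(z)}+\int_0^{\tau_n}\rho_z(\bar Z_t)\,\mathrm{d}t,
\end{equation*}
folding the exponential weights into the \emph{waiting times} of a continuous-time chain $\bar Z$ and invoking the continuous-time Corollary of the appendix on the integral directly. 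The only residual randomness from the $\xi_i$ is the deviation $|\tau_n-n|$, which contributes an $e^{-cn\varepsilon^2}$ term and nothing involving $\|\rho_z\|_\infty$ in the exponent. This is the step you need to change; without it your sketch produces a bound strictly weaker than \eqref{e:coupling}.

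Two further inaccuracies worth fixing. First, the appendix does not perform a ``dyadic blocking at time scales $2^kT$, $k=0,\dots,k(\varepsilon)$'': it fixes a single block length $\tau=k(\varepsilon)T$ (long enough to bring the TV-distance from $\pi$ below $\varepsilon^2/(6\sigma^2)$), splits the chain into the $\tau$ phase-shifted subchains $(X_{k+\tau j})_{j\ge0}$, and applies Jensen and Bennett's lemma to each phase. Second, the middle term $e^{-cn\varepsilon\pi(z)/\nu(z)}$ is not the product of a ``hitting-time computation'' about the non-stationary start; it comes directly from the zeroth soft-local-time increment, namely the event $\{\xi_0\nu(z)/\mu(z)\ge\varepsilon n g(z)/4\}$, whose probability is $\exp\{-\varepsilon n g(z)\mu(z)/(4\nu(z))\}=\exp\{-c n\varepsilon\pi(z)/\nu(z)\}$ since $g(z)\mu(z)=\pi(z)$. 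Relatedly, in the soft-local-times construction the sequences $(\xi_i)$ and $(\tilde\xi_i)$ are not independently sampled and then combined with $\eta$: they are read off from $\eta$ itself as the successive infima needed to catch the next Poisson point, and independence from the output chains is a \emph{consequence} of the construction, not an assumption.
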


\begin{proof}
  To construct the coupling, we use the same procedure as in \cite{PT12}.
  Let $(\Omega , \mathcal F, \mathbb Q)$ be a probability space on which
  we are given a Poisson point process $\eta = (z_i ,v_i )_{i\ge 1}$ on
  $\Sigma \times [0,\infty)$ with intensity measure $\mu\otimes dx$. On
  this probability space we now construct a Markov chain $(Z_i)_{i\ge 0}$
  and an i.i.d.~sequence $(U_i)_{i\ge 0}$ with the required properties.
  For a more detailed explanation of this construction, see \cite{PT12}.

  Let $G_{-1}(z)=0$, $z\in \Sigma $, and define inductively random
  variables $\xi_k\ge 0$, $Z_k\in \Sigma $, $V_k\ge 0$, and random
  functions $G_k:\Sigma \to [0,\infty)$, $k\ge 0$,
  \begin{align}
    \label{e:ZVa}
    \xi_k &= \inf\{ t\ge 0: \exists (z, v ) \in \eta \setminus
    \{(Z_i,V_i)\}_{i=1}^{k-1} \text{ s.t. } G_{k-1}(z )+ t \rho(Z_{k-1},z )
    \ge v \},\\
    G_k(z) &= G_{k-1}(z) + \xi_k \rho(Z_{k-1},z),\\
    (Z_k,V_k) &= \text{the unique point $(z,v)\in \eta $ such that
    $G_k(z)=v$},
    \label{e:ZV}
  \end{align}
  where we use the convention $\rho (Z_{-1},z)=\nu (z)/\mu (z)$. If the
  point satisfying $G_k(z)=v$ in \eqref{e:ZV} is not unique, we pick one
  arbitrarily. The details of the choice are unimportant, as this occurs
  with zero probability.

  Using a similar construction, on the same probability space, we further
  define random variables $U_k \in \Sigma $, $\tilde \xi_k\ge 0$,
  $W_k\ge 0$ and random functions $\tilde G_k:\Sigma \to [0,\infty)$,
  $k \ge 0$,
  \begin{align}
    \tilde\xi_k &= \inf\{ t\ge 0: \exists (z, v)
      \in \eta \setminus \{(U_i,W_i)\}_{i=1}^{k-1} \text{ s.t. } \tilde
      G_{k-1}(z)+ t g(z) \ge v \},\\ \tilde G_k(z) &= \tilde G_{k-1}(z) +
    \tilde\xi_k g(z),\\ (U_k,W_k) &= \text{the unique point
      $(z,v)\in \eta $ such that $\tilde G_k(z)=v$},
  \end{align}
  where again $\tilde G_{-1} \equiv 0$.

  It follows from \cite[Section 4]{PT12} that $Z=(Z_k)_{k\ge 0}$ is a
  Markov chain with the required distribution, and $U = (U_k)_{k\ge 0}$
  an i.i.d.~sequence with marginal $\pi $. Moreover, the sequences
  $(\xi_k)$ and $(\tilde \xi_k)$ are i.i.d.~with exponential mean-one
  marginal. The sequence $(\xi_k)$ is independent of $(Z_k)$, and
  similarly $(\tilde \xi_k) $ is independent of $(U_k)$.

  We now estimate the probability of $\mathcal G(n,\varepsilon )^c$.
  From the above construction it follows that $\mathbb Q$-a.s.
  \begin{equation}
    \begin{split}
      \label{e:ranges}
      \{Z_i\}_{i=0}^k &= \{z \in \Sigma: \text{ there exists $(z,v)\in
      \eta$ with } G_k(z)\ge v\},\\
      \{U_i\}_{i=0}^k &= \{z \in \Sigma: \text{ there exists $(z,v)\in
      \eta$ with } \tilde G_k(z)\ge v\}.
    \end{split}
  \end{equation}
  Consider the following events
  \begin{equation}
    \begin{split}
      A^- &= \big\{\tilde G_{n(1-\varepsilon )} < (1-\tfrac \varepsilon
      2)ng\big\},\\
      A^+ &= \big\{\tilde G_{n(1+\varepsilon )} > (1+\tfrac \varepsilon
      2)ng\big\},\\
      B &= \big\{n (1-\tfrac \varepsilon 2)g \le G_n \le (1+\tfrac \varepsilon 2)ng\big\}.
    \end{split}
  \end{equation}
  Using \eqref{e:ranges}, it follows that
  $\mathcal G(n,\varepsilon )^c\subset (A^+)^c\cup (A^-)^c\cup B^c$.

  To bound the probability of the events $(A^\pm)^c$ and $B^c$, observe
  first that, by construction, $\tilde G_n = g \sum_{i=1}^n \tilde \xi $.
  As $\tilde \xi_i$'s are i.i.d., the standard application of the
  exponential Chebyshev inequality yields the estimate
  \begin{equation}
    \label{e:poisson}
    \mathbb Q\big[ (A^{\pm})^c\big]\le  e^{-c n\varepsilon ^2}.
  \end{equation}
  To estimate $\mathbb Q[B^c]$, we write $G_n(z)$ as
  \begin{equation}
    \label{e:additive}
    G_n(z)= \xi_0 \frac{\nu (z)}{\mu (z)}+\sum_{i=1}^n \xi_i \rho_z(Z_{i-1})
    =\xi_0 \frac{\nu (z)}{\mu (z)}+\int_0^{\tau_n} \rho_z(\bar
    Z_t) dt,
  \end{equation}
  where $(\bar Z_t)_{t\ge 0}$ is a continuous-time Markov chain following
  the same trajectory as $Z$ with mean-one exponential waiting times, and
  $\tau_n$ is the time of the $n$-th jump of $\bar Z$. It follows that
  $\mathbb Q[B^c]$ can be estimated with help of quantitative estimates
  on the deviations of additive functionals of Markov chains. An estimate
  suitable for our purposes is proved in the appendix.

  To apply this estimate we write
  \begin{equation}
    \begin{split}
      \mathbb Q[B^c] \le \sum_{z\in \Sigma } \bigg\{&
        \mathbb Q\big[\tfrac{\xi_0\nu (z)}{\mu (z)} \ge \tfrac 14\varepsilon n
          g(z)\big] +
        \mathbb Q\big[|\tau_n-n|\ge\tfrac 14 n\varepsilon \big]
        \\&+
        \mathbb Q\Big[\int_0^{n(1+\varepsilon /4)} \rho_{z}(\bar Z_t) dt
          - n\big(1+\tfrac \varepsilon 4\big) g(z)
          \ge \tfrac 14 n\varepsilon g(z)\Big]
        \\&+
        \mathbb Q\Big[\int_0^{n(1-\varepsilon /4)} \rho_{z}(\bar Z_t) dt
          - n\big(1-\tfrac \varepsilon 4\big) g(z)
          \le -\tfrac 14n\varepsilon g(z)\Big]\bigg\}.
    \end{split}
  \end{equation}
  The first term satisfies
  \begin{equation}
    \mathbb Q[\xi_0 \nu (z)/\mu(z)\ge \varepsilon n
    g(z)/4]=e^{-cn\varepsilon \frac{\pi(z)}{\nu(z)}}.
  \end{equation}
  The second term can be bounded using a large deviation argument as in
  \eqref{e:poisson}. The last two terms can be bounded using
  \eqref{e:apph} with $\delta = \varepsilon/(4\pm \varepsilon )$,
  $t=n(1\pm \varepsilon /4)$ and $f= \pm \rho_z$, using also the obvious
  identity $\pi (\rho _z)=g(z)$. The theorem then directly follows, the
  condition \eqref{e:epsas} is a direct consequence of the assumption
  \eqref{e:apphcond} of \eqref{e:apph}.
\end{proof}

The same technique can trivially be adapted to couple the ranges of two
Markov chains: Let $P^1$, $P^2$ be transition matrices of two Markov
chains on a common finite state space $\Sigma $ with respective mixing
times $T^1$, $T^2$, but with the same invariant distribution $\pi $. Let
further $\nu^1$, $\nu^2$ be two initial probability distributions on
$\Sigma$. Similarly as above, we fix an a~priori measure $\mu $, and
define $g(x)=\pi (x)/\mu (x)$, $\rho^i(x,y)=\mu (y)^{-1} p^i(x,y)$, $i=1,2$.

\begin{theorem}
  \label{t:couplingmc}
  There exists a probability space $(\Omega ,\mathcal F, \mathbb Q)$
  where one can define Markov chains $Z^1$, $Z^2$ with respective
  transition matrices $P^1$, $P^2$ and starting distributions $\nu^1$,
  $\nu^2$ such that for every $\varepsilon $ satisfying
  \begin{equation}
    \label{e:epsasmc}
    0<\varepsilon \le  \frac 12 \wedge \min_{i=1,2}\min_{z\in \Sigma } \frac{\Var_\pi \rho^i_z}
    {2\|\rho^i_z\|_\infty g(z)}.
  \end{equation}
  and $n\ge 2k(\varepsilon )(T^1\vee T^2)$ we have
  \begin{equation}
    \label{e:couplingmc}
    \mathbb Q\big[\tilde{\mathcal G}(n ,\varepsilon )^c\big]
    \le
    C\sum_{i=1,2}\sum_{z\in \Sigma } \Big(e^{-c  n\varepsilon ^2}
    +e^{-c n\varepsilon \frac{\pi (z)}{\nu^i(z)}}+
    \exp\Big\{-\frac{c \varepsilon ^2   g(z)^2}{\Var_\pi
      \rho^i_z } \frac{n}{k(\varepsilon )T^i}\Big\}
    \Big),
  \end{equation}
  where $c,C\in (0,\infty)$ are absolute constants,
  $\tilde{\mathcal G}(n,\varepsilon )$ is the event
  \begin{equation}
    \label{e:goodmc}
    \tilde {\mathcal G}(n,\varepsilon ) = \big\{
      \{Z^1_i\}_{i=1}^{n(1-\varepsilon )} \subset
      \{Z^2_i\}_{i=1}^n \subset
      \{Z^1_i\}_{i=1}^{n(1+\varepsilon )}
      \big\},
  \end{equation}
  and
  \begin{equation}
    k(\varepsilon)=-\min_{i=1,2}\min_{z\in \Sigma } \log_2 \frac{\pi_\star
      \varepsilon^2 g(z)^2}{6 \Var_\pi (\rho^i_z)}.
  \end{equation}
\end{theorem}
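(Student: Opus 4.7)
The plan is to run the soft local times construction of Theorem~\ref{t:coupling} twice in parallel on a single Poisson point process $\eta$ on $\Sigma \times [0,\infty)$ with intensity $\mu \otimes dx$. For each $i \in \{1,2\}$ I would define variables $(\xi_k^i, Z_k^i, V_k^i)$ and functions $G_k^i$ by the recursion \eqref{e:ZVa}--\eqref{e:ZV}, with $\rho^i$ in place of $\rho$ and $\nu^i$ in place of $\nu$. By \cite[Section 4]{PT12} each $Z^i$ is then a Markov chain with transition matrix $P^i$ and initial distribution $\nu^i$, and the range characterization \eqref{e:ranges} still gives
\begin{equation*}
\{Z^i_j\}_{j=0}^k = \bigl\{z \in \Sigma : \exists\,(z,v) \in \eta \text{ with } G_k^i(z) \ge v\bigr\}.
\end{equation*}
Consequently $\tilde{\mathcal G}(n,\varepsilon)$ is implied by the pointwise inequalities $G^1_{n(1-\varepsilon)}(z) \le G^2_n(z) \le G^1_{n(1+\varepsilon)}(z)$ for all $z \in \Sigma$.

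It therefore suffices to show that, with high probability and simultaneously for all $z \in \Sigma$, one has $G^i_k(z) \in [k(1 - \tfrac{\varepsilon}{4}) g(z),\ k(1 + \tfrac{\varepsilon}{4}) g(z)]$ for the three pairs $(i,k) = (2,n)$ and $(1, n(1\pm\varepsilon))$; these combine to yield the desired sandwich once $\varepsilon$ is small. This reduces to a variant of the event $B$ in the proof of Theorem~\ref{t:coupling}. Using the decomposition \eqref{e:additive}, the deviation of $G_k^i(z)$ around $k g(z)$ splits into three contributions: the initial boundary term $\xi_0 \nu^i(z)/\mu(z)$, the Poissonian fluctuation of $\tau_k^i$, and the deviation of $\int_0^{\tau_k^i} \rho^i_z(\bar Z^i_t)\,dt$ from $k g(z)$, where the identity $\pi(\rho^i_z) = g(z)$ holds for both $i$ since the two chains share the invariant distribution $\pi$. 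These three sources produce, respectively, the terms $e^{-cn\varepsilon \pi(z)/\nu^i(z)}$, $e^{-cn\varepsilon^2}$, and the mixing-time dependent term in \eqref{e:couplingmc}, the last one via the Chernov-type estimate \eqref{e:apph} from the appendix, in which $T^i$ and $\Var_\pi \rho^i_z$ enter explicitly.

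The hypothesis \eqref{e:epsasmc} is precisely the condition \eqref{e:apphcond} applied to $\rho^i_z$ for both $i$, and summation of the three bounds over $i \in \{1,2\}$ and $z \in \Sigma$ then yields \eqref{e:couplingmc}. The argument is essentially a direct adaptation of Theorem~\ref{t:coupling} and there is no genuine obstacle: because both chains share $\pi$, the reference mean $g(z)$ is common to $G^1$ and $G^2$, so the sandwich of ranges is meaningful, and the concentration of each $G^i$ can be controlled separately by the same Chernov machinery with its own mixing time $T^i$. The only structural difference from Theorem~\ref{t:coupling} is that the auxiliary i.i.d.~sequence — whose soft local time $\tilde G$ is literally a multiple of $g$ and whose tails follow from a one-line exponential Chebyshev bound — is here replaced by a second soft-local-time process that requires the full additive-functional Chernov estimate; the structure of \eqref{e:couplingmc} is symmetric in $i$ precisely because this replacement treats both chains on the same footing.
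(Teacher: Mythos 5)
Your proposal is correct and is exactly what the paper has in mind: the paper gives no separate proof of Theorem~\ref{t:couplingmc}, remarking only that the technique of Theorem~\ref{t:coupling} ``can trivially be adapted,'' and your explicit rendering---two parallel soft-local-time constructions driven by the same Poisson point process, with both $G^1_k$ and $G^2_k$ concentrated around $kg$ via the decomposition \eqref{e:additive} and the appendix Chernov estimate, the initial-term, Poissonian, and additive-functional contributions yielding the three summands of \eqref{e:couplingmc}---is precisely that adaptation. The one genuine change you correctly identify, replacing the one-line exponential Chebyshev bound for the i.i.d.~soft local time $\tilde G$ by the full Markov-chain estimate applied to $G^1$ at the two times $n(1\pm\varepsilon)$, is handled properly, including the arithmetic check that $(1-\varepsilon)(1+\varepsilon/4)\le 1-\varepsilon/4$ and $(1+\varepsilon)(1-\varepsilon/4)\ge 1+\varepsilon/4$ needed to chain the sandwich inequalities.
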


\section{Coupling the vacant sets}
\label{s:toruscoupling}

In this section we state the quantitative version of
Theorem~\ref{t:toruscouplingweak} giving the coupling between the vacant
sets of the random walk and the random interlacements in the macroscopic
subsets of the torus. We then show the connection between
Theorem~\ref{t:couplingmc} and our main result by defining the relevant
finite state space Markov chains.

For technical reasons we should work with `rounded boxes' instead of the
usual ones. Their advantage is that the common
potential-theoretic quantities, like equilibrium measure and hitting
probabilities, are smoother on them; similar smoothing was used in
\cite[Section 7]{PT12}. Let
\begin{equation}
  \label{e:gammachi}
  \gamma\in\Big(\frac 1 {d-1},1\Big)
  \quad\text{and}\quad
  \chi \in \Big(0,\frac 14\Big)
\end{equation}
be two constants that remain fixed through the paper. Set
$L=2N^\gamma +\chi N$, and define the box $B$ with rounded corners
\begin{equation}
  \label{e:BN}
  B=B_N=\bigcup_{x\in[L,N-L]^d\cap \mathbb Z^d} B(x,\chi N).
\end{equation}
Let further $\Delta$ be the set of points at distance at least $N^\gamma$
from $B$,
\begin{equation}
  \label{e:DeltaN}
  \Delta = \Delta_N=\Big(\bigcup_{x\in B_N} B(x,N^\gamma)\Big)^c,
\end{equation}
see Figure~\ref{f:BN} for illustration. We view $B$ and $\Delta$ as
subsets of $\mathbb Z^d$ as well as of $\mathbb T_N^d$ (identified with
  $\{0,\dots,N-1\}^d$).
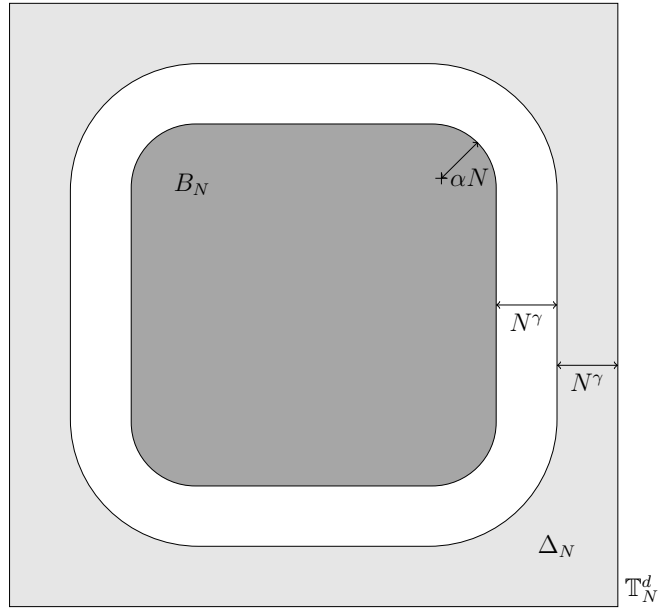
\begin{figure}[ht]
  \centering
  \scalebox{0.8}{
    \begin{tikzpicture}[scale=1]
      \draw[fill=gray!20] (0, 0) rectangle  (10, 10);
      \path[fill=white, rounded corners=60, draw=black] (1,1) rectangle
      (9,9);
      \path[fill=gray!70, draw=black, rounded corners=30] (2,2) rectangle
      (8,8);
      \draw[<->] (7.1,7.1) -- (7.7,7.7);
      \draw (7,7.1) -- (7.2,7.1); \draw (7.1,7) -- (7.1,7.2);
      \node [below right] at (7.1,7.4) {${\chi N}$};
      \draw[<->] (8,5) -- (9,5);
      \node [below] at (8.5,5) {${N^{\gamma}}$};
      \draw[<->] (9,4) -- (10,4);
      \node [below] at (9.5,4) {${N^{\gamma}}$};
      \node at (3,7) {$B_N$};
      \node at (9,1) {$\Delta_N$};
      \node at (10.4,0.3) {$\mathbb T_N^d$};
  \end{tikzpicture}}
  \caption{The rounded box $B_N$ (dark gray), the `security zone' of
    width $N^\gamma $ (white), and the set $\Delta_N$ (light gray) in the
    torus $\mathbb T_N^d$.}
  \label{f:BN}
\end{figure}

We can state the quantitative version of
Theorem~\ref{t:toruscouplingweak} now.
\begin{theorem}
  \label{t:toruscoupling}
  Let $u>0$ and $\varepsilon_N$ be a sequence satisfying
  $\varepsilon_N\in(0,c_0)$ with $c_0$ sufficiently small. Set
  $\kappa = \gamma (d-1)-1>0$ and assume that
  $\varepsilon_N^2 \ge c N^{\delta-\kappa } $ for some $\delta >0$. Then
  there exists coupling $\mathbb Q$ of $\mathcal V_N^u$ with
  $\mathcal V^{u(1\pm \varepsilon_N)}$ such that for every $N$ large enough
  \begin{equation}
    \begin{split}
      \label{e:c1}
      \mathbb Q\big[(\mathcal V^{u(1-\varepsilon_N)} \cap B_N)
        \supset (\mathcal V_N^{u} \cap B_N)
        \supset (\mathcal V^{u(1+\varepsilon_N)} \cap B_N)\big]
      \ge 1- C_1 e^{-C_2 N^{\delta '}}
  \end{split}
  \end{equation}
  for some constants $\delta '>0$, and $C_1,C_2\in (0,\infty)$ depending
  on $u$, $\delta $, $\gamma $ and $\chi $.
\end{theorem}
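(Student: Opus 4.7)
The plan is to decompose both trajectories into excursions that enter $B_N$ and return only after reaching $\Delta_N$, organise the successive entry points of these excursions into two Markov chains on the common state space $\Sigma = \partial_e B_N$ with the same invariant distribution $\pi$, and invoke Theorem~\ref{t:couplingmc} to couple their ranges. The security zone of width $N^\gamma$ between $B_N$ and $\Delta_N$ is what makes $\pi$ essentially equal to $\bar e_{B_N}$ and what gives favourable estimates in \eqref{e:couplingmc}.

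Concretely, let $Z^1$ be the chain of entry points into $B_N$ of the torus walk started from its stationary distribution and run for time $u N^d$. Between two consecutive entries the walk spends time of order $N^2$ in $\Delta_N$ and hence mixes essentially fully on the torus, so $T^1 = O(1)$ (or at worst polylogarithmic); by reversibility the invariant measure $\pi$ is the entrance distribution from $\partial \Delta_N$ onto $\partial_e B_N$, which is within a multiplicative factor $1 + O(N^{-\kappa})$ of $\bar e_{B_N}$ (with $\kappa = \gamma(d-1)-1$) by the regularity of harmonic measure on rounded boxes developed in the spirit of \cite[Section~7]{PT12}. Let $Z^2$ be an i.i.d.~sequence on $\Sigma$ with marginal $\pi$: by \eqref{e:ridef} and a small tilt of the entry distribution this realises the excursion structure of random interlacements inside $B_N$, the $O(N^{-\kappa})$ discrepancy between $\pi$ and $\bar e_{B_N}$ being absorbed into a multiplicative perturbation of the level from $u$ to $u(1 \pm \tfrac14 \varepsilon_N)$. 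Taking the a priori measure $\mu$ on $\Sigma$ so that $g\equiv 1$, smoothness of the transition densities $\rho^i_z$ then yields $\Var_\pi(\rho^i_z) = O(N^{-\kappa})$ uniformly in $z$, making the exponents in \eqref{e:couplingmc} of order $n \varepsilon_N^2 N^\kappa / T^i$; for $n$ of order $u\,\Cap(B_N) \asymp N^{d-2}$, this is stretched-exponentially small precisely under the hypothesis $\varepsilon_N^2 \ge c N^{\delta-\kappa}$.

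Applying Theorem~\ref{t:couplingmc} for $n = u\,\Cap(B_N)(1 + \tfrac12 \varepsilon_N)$ sandwiches the set of torus entry points between the two sets of interlacement entry points corresponding to levels $u(1 \pm \tfrac12 \varepsilon_N)$. To lift this to a sandwich of vacant sets I use the strong Markov property: conditional on a shared entry point $z\in \partial_e B_N$, the torus excursion and the $\mathbb Z^d$ excursion starting at $z$ can be coupled to coincide exactly until they exit the $N^\gamma$-neighbourhood of $B_N$, because on that time scale the two walks have the same law. A final step replaces the deterministic $n$ by the actual number of $B_N$-excursions of the torus walk in time $u N^d$ and by the Poisson number of interlacement excursions at the perturbed levels; both concentrate around their common mean $u\,\Cap(B_N)$ within relative error $\tfrac14 \varepsilon_N$ (via the Chernov-type estimate for additive functionals from the appendix on the torus side, and a direct Poisson estimate on the interlacement side), which converts the entry-point sandwich into the vacant-set sandwich \eqref{e:c1}.

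The main obstacle I anticipate is the uniform variance estimate $\Var_\pi(\rho^i_z) = O(N^{-\kappa})$. It demands sharp control of the regularity of the equilibrium measure of $B_N$ and of the hitting distribution of $B_N$ from $\partial \Delta_N$ at the scale of individual boundary sites, exactly what the rounded corners and the $N^\gamma$-thick safety zone are designed to deliver: rounded corners give local isotropy, and the $N^\gamma$ buffer lets the walk diffuse far enough that the hitting distribution becomes nearly proportional to $\bar e_{B_N}$, the quantitative gain being the power $\kappa = \gamma(d-1)-1$. The other delicate point is bookkeeping: the three error terms in \eqref{e:couplingmc}, together with the concentration of the excursion count and of the Poisson number, must be simultaneously balanced to collapse into a single bound of the form $\exp(-c N^{\delta'})$ asserted in \eqref{e:c1}.
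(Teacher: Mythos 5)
Your high-level plan — decompose into $B$-to-$\Delta$ excursions, couple the two excursion processes via Theorem~\ref{t:couplingmc}, then convert to a sandwich of vacant sets using concentration of the excursion counts — is the right one, and it is essentially the paper's route. However there are several genuine errors in the intermediate quantities, one of which is structural.

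The structural gap is your treatment of the interlacement side. You take $Z^2$ to be an \emph{i.i.d.}~sequence of entry points and claim this ``realises the excursion structure of random interlacements inside $B_N$''. It does not. The $N^\gamma$-wide security zone implies, by Lemma~\ref{l:ebar}, that a walk started on $\partial\Delta$ escapes $B$ forever only with probability $\asymp N^{\gamma-1}$; hence a single interlacement trajectory makes $\Theta(N^{1-\gamma})$ successive excursions between $B$ and $\Delta$, and the entry points of these excursions are strongly correlated. The excursion entry (and exit) points of the whole interlacement point process form a genuine Markov chain whose transition kernel \eqref{e:distZ} mixes the ``return to $B$'' and ``escape to infinity, then start a fresh trajectory'' alternatives. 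Treating this as i.i.d.~would both misrepresent the law of the excursion ranges and, since Theorem~\ref{t:couplingmc} requires the two chains to have \emph{the same} invariant distribution, force you into an uncontrolled comparison. The paper proves (Lemma~\ref{l:invariantmeasure}) that the invariant laws of the torus and interlacement excursion chains coincide \emph{exactly} — both equal $\bar e_B^\Delta(x_1)P_{x_1}[X_{H_\Delta}=x_2]$ — by a reversibility and last-exit argument using \cite[Prop.~1.8]{Szn12}; this exact identity, not an $O(N^{-\kappa})$-approximation absorbed into a level shift, is what lets Theorem~\ref{t:couplingmc} apply. It also explains why $\Sigma=\partial B\times\partial\Delta$ (entry/exit pairs), rather than $\partial_e B_N$: the factored structure of $\pi$ and of the transition kernels makes $g$ and $\rho_z$ computable and allows the variance bound of Lemma~\ref{l:varrho}.

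Two further parameter errors follow from the same misreading of the geometry. Your claim $T^1=O(1)$ (or polylog) is off: between consecutive entries the walk only has to cross the buffer of width $N^\gamma$, taking time $\asymp N^{2\gamma}\ll N^2$, so it does \emph{not} mix on the torus before re-entering $B$; the correct bound (Lemma~\ref{l:mixingtimes}) is $T_Y,T_Z\asymp N^{1-\gamma}$, obtained by a coupling that waits for an excursion to reach the deep interior $G\subset B$. Likewise the relevant excursion count is $n\asymp u\,\Cap_\Delta(B)\asymp N^{d-1-\gamma}$, not $u\,\Cap(B)\asymp N^{d-2}$ — the extra factor $N^{1-\gamma}$ being precisely the number of $B$-$\Delta$ excursions per visit of the walk to the ``far'' region. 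When the correct $g$, $\Var_\pi\rho_z$, $n$, and $T$ are substituted, the exponent in \eqref{e:couplingmc} works out to $\asymp\varepsilon_N^2 N^{\kappa}/\log N$, which matches the hypothesis $\varepsilon_N^2\ge cN^{\delta-\kappa}$ and gives the claimed $e^{-cN^{\delta'}}$; your estimate $n\varepsilon_N^2 N^\kappa/T^i\asymp\varepsilon_N^2 N^{\kappa+d-2}$ would overstate the rate, a sign that the intermediate quantities were off in compensating directions.

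Your last step — coupling the actual excursion paths once the boundary data agree, and replacing the deterministic $n$ by the random counts $\mathcal N(uN^d)$ and $\mathcal N'(u)$ via the Chernoff bound of the appendix and a Poisson estimate — matches the paper (Propositions~\ref{p:concentr_Nt} and \ref{p:nri}) and is fine, provided the coupled object is the entry/exit pair chain described above.
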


Theorem~\ref{t:toruscoupling} will be proved with help of
Theorem~\ref{t:couplingmc}. To this end we now introduce relevant Markov
chains which will be coupled together later.

The first Markov chain encodes the excursions of the random walk on the
torus into the rounded box $B$. More precisely, let $R_i$, $D_i$ be the
successive excursion times between $B$ and $\Delta $ of the random walk
$X_n$ on $\mathbb T_N^d$ defined by $D_0=H_\Delta $ and for $i\ge 1$
inductively
\begin{equation}
  \begin{split}
    \label{e:excursions}
    R_i&=H_B\circ \theta_{D_{i-1}} + D_{i-1},\\
    D_i&=H_\Delta \circ \theta_{R_i} + R_i.
  \end{split}
\end{equation}
We define the process
$Y_i=(X_{R_i},X_{D_i})\in \partial B \times \partial\Delta =:\Sigma $,
$i\ge 1$. By the strong Markov property of $X$, $(Y_i)_{i\ge 1}$ is a
Markov chain on $\Sigma $ with transition probabilities
\begin{equation}
  \label{e:Y_i}
  P[Y_{n+1}= \vec y | Y_n=\vec x]
  = P_{x_2}[X_{H_B }=y_1]P_{y_1}[X_{H_\Delta } = y_2],
\end{equation}
for every $\vec x=(x_1,x_2)$ and $\vec y=(y_1,y_2)\in \Sigma$, and with
initial distribution
\begin{equation}
  \label{e:nuY}
  \nu_Y(\vec x)=P[X_{R_1}=x_1, X_{D_1}=x_2] =
  P[X_{R_1}=x_1]P_{x_1}[X_{H_\Delta }=x_2].
\end{equation}

The second Markov chain, encoding the behavior of the random
interlacements in $B$, is defined similarly by considering separately the
excursions of every random walk trajectory of random interlacements which
enters $B$, cf.~\eqref{e:ridef}. Let $(X^{(i)})_{i\ge 1}$ be a
$\PZ_{\bar e_B}$-distributed i.i.d.~sequence, where $\bar e_B$ is the
normalized equilibrium measure of $B$ introduced in \eqref{e:bare}. For
every $i\ge 1$, set $R_1^{(i)}=0$ and define $D_j^{(i)}$, $R_j^{(i)}$,
$j\ge 1$ analogously to \eqref{e:excursions} to be the successive
departure and return times between $B$ and $\Delta $ of the random walk
$X^{(i)}$. Set
\begin{equation}
  \label{e:Ti}
  T^{(i)}=\sup\{j:R^{(i)}_j<\infty\}
\end{equation}
to be the number of excursions of $X^{(i)}$ between $B$ and $\Delta $
which is a.s.~finite. Finally, let $(Z_k)_{k\ge 1}$ be the sequence of
the starting and ending points of these excursions,
\begin{equation}
  \label{e:riexc}
  Z_k=(X^{(i)}_{R^{(i)}_j},X^{(i)}_{D^{(i)}_j}) \qquad
  \text{for $i\ge 1$ and $1\le j
    \le T^{(i)}$ given by } k = \sum_{n=1}^{i-1}T^{(n)}+j.
\end{equation}
The strong Markov property for $X^{(i)}$'s and their independence imply
that $Z_k$ is a Markov chain on $\Sigma $ with transition probabilities
\begin{equation}
  \begin{split}
    \label{e:distZ}
    P & [Z_{n+1}= \vec y | Z_n=\vec x]\\
    & = \big(\PZ_{x_2}[H_B<\infty, X_{H_B}=y_1] +
    \PZ_{x_2}[H_B=\infty]\bar e_B(y_1)\big) \PZ_{y_1}[X_{H_\Delta }=y_2]
  \end{split}
\end{equation}
for every $\vec x,\vec y\in \Sigma $, and with initial distribution
\begin{equation}
  \nu_Z(\vec x)=\bar e_B(x_1)\PZ_{z_1}[X_{H_\Delta}=x_2].
\end{equation}

To apply Theorem~\ref{t:couplingmc}, we need to estimate all relevant
quantities for the Markov chains $Y$ and $Z$. This is the content of the
following four sections.

\medskip

\emph{From now on, all constants $c$ appearing in the text will possibly depend
on the dimension~$d$, and the constants $\chi$ and $\gamma$ defined in
\eqref{e:gammachi}.}

\section{Technical estimates}
\label{s:rwproperties}
In this section we show several estimates on potential-theoretic
quantities related to rounded boxes. Let $\bar e_B^\Delta $ be the
normalized equilibrium measure on $B$ for the walk killed on~$\Delta $,
\begin{equation}
  \label{e:eDelta}
  \bar e_B^\Delta (x) =
  \frac {\bbone_{x\in \partial B}} { \Cap_\Delta(B) } P_x[\tilde H_B >
  H_\Delta ],
\end{equation}
where
\begin{equation}
  \label{e:capDelta}
  \Cap_\Delta(B) = \sum_{x\in \partial B} P_x[\tilde H_B > H_\Delta]
\end{equation}
is the associated capacity. We first show that $\bar e_B^\Delta $ is
comparable with the uniform distribution on $\partial B$ and give the
order of $\Cap_\Delta(B)$.

\begin{lemma}
  \label{l:baredelta}
  The is $c\in (0,1)$ such that
  \begin{equation}
    c N^{d-1-\gamma }  \le \Cap_\Delta(B) \le c^{-1}N^{d-1-\gamma },
  \end{equation}
  and for every $x\in \partial B$
  \begin{equation}
    c N^{1-d}\le \bar e^\Delta_B(x) \le c^{-1} N^{1-d}.
  \end{equation}
\end{lemma}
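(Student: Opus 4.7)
The plan is to reduce both statements to the uniform pointwise bound
\begin{equation}
  \label{e:pointwise-plan}
  P_x[\tilde H_B > H_\Delta] \asymp N^{-\gamma},\qquad x\in \partial B,
\end{equation}
from which the lemma follows by summing over $\partial B$ (using the standard estimate $|\partial B|\asymp N^{d-1}$) and then normalizing to obtain $\bar e_B^\Delta(x) \asymp N^{-\gamma}/N^{d-1-\gamma} = N^{1-d}$.

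The key geometric input is that $B$, being the Minkowski sum of $[L,N-L]^d$ with the ball of radius $\chi N$, has a $C^2$-smooth boundary whose principal curvatures are of order $1/(\chi N)$. At the scale $N^\gamma$ of the security zone, $\partial B$ deviates from its tangent hyperplane at any $x\in \partial B$ by at most $O((N^\gamma)^2/(\chi N))=O(N^{2\gamma-1})=o(N^\gamma)$, using crucially that $\gamma<1$. This lets me sandwich $B$ locally between two parallel half-spaces $H_x^-\subset B\cap B(x,2N^\gamma)\subset H_x^+$ whose bounding hyperplanes are parallel to the tangent plane at $x$ and offset from it by $O(N^{2\gamma-1})$, and analogously $\Delta\cap B(x,2N^\gamma)$ is trapped between parallel hyperplanes at normal distance $N^\gamma\pm O(N^{2\gamma-1})$.

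To establish \eqref{e:pointwise-plan} I would compare the walk started at $x$ with its one-dimensional projection onto the outward normal $n_x$ at $x$, which is a bounded-increment martingale with variance in $[1/d,1]$. For the upper bound, the event $\{\tilde H_B>H_\Delta\}$ forces this projection to reach height $\asymp N^\gamma$ before returning to height $\le 0$, and a standard gambler's-ruin/optional-stopping computation yields the matching $O(N^{-\gamma})$. For the lower bound, I would restrict attention to trajectories remaining in $B(x,2N^\gamma)$ until the first exit in the normal direction; on that event the half-space sandwich reduces the problem to the one-dimensional one and gambler's ruin delivers a lower bound of order $N^{-\gamma}$. The probability that the tangential projection leaves $B(x,2N^\gamma)$ within the relevant time window of size $O(N^{2\gamma})$ is bounded away from one by an Azuma/local-CLT estimate, since the tangential components have variance of order $N^{2\gamma}$ over that window.

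The main obstacle I anticipate is making these comparisons uniform over $x\in \partial B$. On the flat faces of $B$ the normal aligns with a coordinate axis and the projection is a standard lazy one-dimensional walk, but on the rounded regions the outward normal is oblique, so the projected increments have a distribution that varies with $x$. This affects constants only, not exponents, and can be absorbed either by a direct martingale treatment of the projected walk or by a Minkowski-monotonicity argument that replaces $B$ locally by a suitable flat half-space. One must also confirm the curvature bound uniformly on the doubly-curved edge and corner regions, but these contribute only lower-order surface area and pose no additional difficulty.
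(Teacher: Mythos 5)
Your overall reduction---bounding $P_x[\tilde H_B > H_\Delta]\asymp N^{-\gamma}$ uniformly in $x\in\partial B$, then summing over $\partial B$ (of size $\asymp N^{d-1}$) and normalizing---is exactly the paper's. Your lower bound is also essentially the paper's, though overengineered: because $B$ is convex, the global inclusion $\{\tilde H_{\mathcal H_x}>H_{\mathcal H'_x}\}\subset\{\tilde H_B>H_\Delta\}$ holds with no localization to $B(x,2N^\gamma)$ and no control of the tangential excursion, where $\mathcal H_x$ is the tangent hyperplane at $x$ and $\mathcal H'_x$ its parallel translate at normal distance $N^\gamma$; the one-dimensional projection and gambler's ruin then give the $cN^{-\gamma}$ lower bound directly, with uniform constants since the projection of simple random walk onto any direction is a bounded-increment martingale with variance bounded below.

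The upper bound, however, has a genuine gap. You claim that $\{\tilde H_B>H_\Delta\}$ forces the normal projection onto $n_x$ to reach height $\asymp N^\gamma$ before returning to height $\le 0$. This inclusion is false: because $B$ is strictly convex and $\Delta$ surrounds it, a trajectory can both avoid $B$ and enter $\Delta$ by traveling tangentially a distance $\gg N^\gamma$---avoiding $B$ does not force the normal coordinate to stay nonnegative, and reaching $\Delta$ does not force the normal coordinate to exceed $N^\gamma$. The half-space comparison inherently goes the wrong way for an upper bound: the half-space bounded by $\mathcal H_x$ \emph{contains} $B$, and the far side of $\mathcal H'_x$ is \emph{contained in} $\Delta$, so $\{\tilde H_{\mathcal H_x}>H_{\mathcal H'_x}\}$ is a sub-event of $\{\tilde H_B>H_\Delta\}$, not a super-event; your suggested ``Minkowski-monotonicity'' replacement of $B$ by a flat half-space would therefore only reprove the lower bound. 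A localized version would additionally require that the walker leaves $B(x,2N^\gamma)$ before touching $B\cup\Delta$ with probability $O(N^{-\gamma})$, and ``bounded away from one'' via Azuma is far too weak for that. The paper closes the upper bound with a different comparison: it inscribes a ball $\mathcal G_x\subset B$ of radius $\chi N$ tangent to $\partial B$ at $x$ and a concentric ball $\mathcal G'_x$ of radius $\chi N+N^\gamma$; since $\mathcal G_x\subset B$ and $\Delta\subset(\mathcal G'_x)^c$, the event $\{\tilde H_B>H_\Delta\}$ is contained in the annulus-escape event $\{\tilde H_{\mathcal G_x}>H_{\mathcal G'_x}\}$, whose probability is $\asymp N^{-\gamma}$ by Lawler's Proposition~1.5.10 because $\chi N\gg N^\gamma$. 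You would need this (or an equivalent inscribed curved-obstacle comparison) to make the upper bound correct.
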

\begin{proof}
  In view of \eqref{e:eDelta}, \eqref{e:capDelta}, to prove the lemma it
  is sufficient to show that uniformly in $x\in \partial B$,
  \begin{equation}
    \label{e:pp}
    c N^{-\gamma } \le P_x[\tilde H_B> H_\Delta ] \le c^{-1}N^{-\gamma }.
  \end{equation}

  For the lower bound, let $\mathcal H_x$ be the $(d-1)$-dimensional
  hyperplane `tangent' to $\partial B$ containing $x$, and let
  $\mathcal H'_x$ be the hyperplane parallel to $\mathcal H_x$ tangent to
  $\partial \Delta $ (see Figure~\ref{f:Hx}). Then
  \begin{equation}
    P_x[\tilde H_B> H_\Delta ]
    \ge P_x[\tilde H_{\mathcal H_x}> H_{\mathcal H'_x}]
    \ge c N^{-\gamma }
  \end{equation}
  where the last inequality follows from observing the projection of $X$
  on the direction perpendicular to $\mathcal H_x$ and the usual
  martingale argument.

  \begin{figure}[ht]
    \centering
    \begin{tikzpicture}[scale=.9]
      \draw (2,0) arc (0:90:2);
      \draw (-2,2) -- (0,2);
      \node [above right] at (-2,4) {$\Delta_N$};
      \draw (4,0) arc (0:90:4);
      \draw (-2,4) -- (0,4);
      \node [below right] at (-2,2) {$B_N$};
      \draw [fill] ({sqrt(2)},{sqrt(2)}) circle(.05);
      \node [above right] at ({sqrt(2)},{sqrt(2)}) {$x$};
      \draw ({2*sqrt(2)},0) -- (0,{2*sqrt(2)});
      \node [above right] at ({2*sqrt(2)},0) {$\mathcal{H}_x$};
      \draw ({4*sqrt(2)},0) -- ({sqrt(2)},{3*sqrt(2)});
      \node [above right] at ({4*sqrt(2)},0) {$\mathcal{H}_x'$};
    \end{tikzpicture}
    \caption{The planes $\mathcal{H}_x$ and $\mathcal{H}'_x$ from the
      proof of Lemma~\ref{l:baredelta}.}
    \label{f:Hx}
  \end{figure}
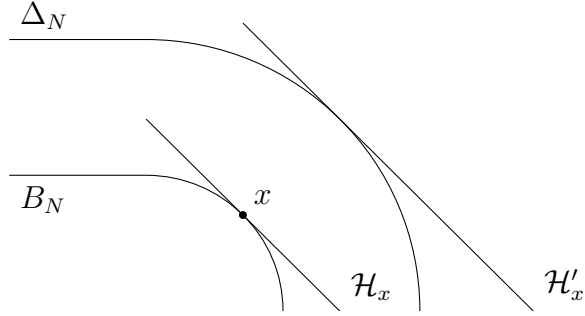

  The upper bound in \eqref{e:pp} is proved similarly. We consider a ball
  $\mathcal G_x$ contained in $B$ with radius $\chi N$ tangent to
  $\partial B$ at $x$, and another ball $\mathcal G'_x$ with radius
  $\chi N+N^\gamma $ concentric with $\mathcal G_x$. Then
  \begin{equation}
    P_x[\tilde H_B> H_\Delta ]
    \le P_x[\tilde H_{\mathcal G_x}> H_{\mathcal G'_x}]
    \le c N^{-\gamma },
  \end{equation}
  using \cite[Proposition~1.5.10]{Law91} and the fact that
  $\chi N\gg N^\gamma $. This completes the proof.
\end{proof}

For the usual equilibrium measure we have similar estimates.
\begin{lemma}
  \label{l:ebar}
  There is a constant $c$ such that for every $x\in \partial B$
  \begin{equation}
    \label{e:ebari}
    c N^{1-d}\le \bar e_B(x) \le c^{-1} N^{1-d}.
  \end{equation}
  and
  \newconstant{c:exit_B}
  \begin{equation}
    \label{e:ebarii}
    \inf_{y\in \partial \Delta }
    P^{\mathbb Z^d}_y[H_B=\infty]\ge \useconstant{c:exit_B} N^{\gamma -1}.
  \end{equation}
\end{lemma}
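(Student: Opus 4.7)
The plan is to prove the two bounds in the order ``upper in \eqref{e:ebari}'' $\Rightarrow$ \eqref{e:ebarii} $\Rightarrow$ ``lower in \eqref{e:ebari}'': the first is immediate from an inner tangent ball, while the remaining two rest on convexity of $B$, a projected-martingale gambler's-ruin argument, and the same excursion decomposition implicit in the proof of Lemma~\ref{l:baredelta}.

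\emph{Upper bound in \eqref{e:ebari}.} Since $B=\bigcup_{w\in [L,N-L]^d\cap\mathbb Z^d} B(w,\chi N)$, every $x\in\partial B$ lies on the boundary of some inner tangent ball $\mathcal G_x\subset B$ of radius $\chi N$. Monotonicity of the equilibrium measure in the set gives
\begin{equation*}
e_B(x)=P_x^{\mathbb Z^d}[\tilde H_B=\infty]\le P_x^{\mathbb Z^d}[\tilde H_{\mathcal G_x}=\infty]=e_{\mathcal G_x}(x)\le c/N,
\end{equation*}
the last inequality being the standard estimate for the SRW escape probability from a boundary point of a ball of radius $\asymp N$ (e.g.\ \cite[Prop.~1.5.10]{Law91} with outer sphere sent to infinity). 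Sandwiching $B$ between $B(0,\chi N)$ and an enclosing ball of radius $cN$ and invoking capacity monotonicity yields $\Cap(B)\asymp N^{d-2}$, and division proves the upper bound.

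\emph{Proof of \eqref{e:ebarii}.} Fix $y\in\partial\Delta$ and let $x^*$ be a nearest point of $B$ to $y$, so $|y-x^*|=d(y,B)=N^\gamma+O(1)$. Because $B$ is (essentially) convex, $\vec v=(y-x^*)/|y-x^*|$ is an outer normal at $x^*$ and $B$ is contained in the supporting half-space $H=\{z:\langle z-x^*,\vec v\rangle\le 0\}$. The projected process $U_n=\langle X_n-x^*,\vec v\rangle$ is a martingale with $|U_{n+1}-U_n|\le 1$, per-step conditional variance $1/d$, and $U_0=N^\gamma+O(1)>0$; crucially $U_n>0\Rightarrow X_n\notin H\Rightarrow X_n\notin B$. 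For a large constant $C$ to be chosen set $\tau=\inf\{n:U_n\le 0\text{ or }U_n\ge CN\}$. Optional stopping applied to the bounded martingale $U_{\cdot\wedge\tau}$ together with the overshoot bound $U_\tau\le CN+1$ gives
\begin{equation*}
P_y^{\mathbb Z^d}[U_\tau\ge CN]\ge\frac{N^\gamma-O(1)}{CN+1}\ge c\,N^{\gamma-1}/C.
\end{equation*}
On this event $d(X_\tau,B)\ge CN$ (since the $\vec v$-projection of $X_\tau-b$ exceeds $CN$ for every $b\in B$), and the last-exit identity combined with the upper bound on $e_B$ just proved and $\Cap(B)\asymp N^{d-2}$ yields
\begin{equation*}
P_z^{\mathbb Z^d}[H_B<\infty]=\sum_{x\in\partial B}G(z,x)\,e_B(x)\le c(CN)^{2-d}\Cap(B)\le c'C^{2-d}
\end{equation*}
for every $z$ with $d(z,B)\ge CN$. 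Choosing $C$ so that $c'C^{2-d}\le 1/2$ and applying the strong Markov property at $\tau$ give $P_y^{\mathbb Z^d}[H_B=\infty]\ge cN^{\gamma-1}$.

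\emph{Lower bound in \eqref{e:ebari}.} A one-step excursion decomposition and the strong Markov property at $H_\Delta$ give, for $x\in\partial B$,
\begin{equation*}
e_B(x)=E_x^{\mathbb Z^d}\bigl[\bbone\{\tilde H_B>H_\Delta\}\,P_{X_{H_\Delta}}^{\mathbb Z^d}[H_B=\infty]\bigr]\ge P_x^{\mathbb Z^d}[\tilde H_B>H_\Delta]\cdot\inf_{z\in\partial\Delta}P_z^{\mathbb Z^d}[H_B=\infty].
\end{equation*}
The first factor is $\ge cN^{-\gamma}$ by Lemma~\ref{l:baredelta} applied to $\mathbb Z^d$ (its proof transfers verbatim since the trajectories in question reach $\partial\Delta$ without feeling torus periodicity), while the second factor is $\ge cN^{\gamma-1}$ by the previous step. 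Multiplying gives $e_B(x)\ge c/N$, and dividing by $\Cap(B)\asymp N^{d-2}$ concludes. The delicate point is the middle step: $U_n$ is not a one-dimensional lattice walk, so the gambler's-ruin estimate must be handled via optional stopping on $U_{\cdot\wedge\tau}$ with explicit overshoot control, and the constant $C$ must be tuned large enough that the capacity bound on $P_z^{\mathbb Z^d}[H_B<\infty]$ produces an escape probability bounded away from zero independently of $N$.
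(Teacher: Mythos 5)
Your proof is correct, and while it rests on the same two core mechanisms as the paper's argument (tangent balls/hyperplanes for the projected gambler's-ruin step, and $\Cap(B)\asymp N^{d-2}$), it is organized differently and handles the ``escape from far away'' step by a genuinely different route. The paper proves the two halves of \eqref{e:ebari} first, using hyperplanes $\mathcal H_x, \mathcal H'_x$ at separation $N$ and citing \cite[Prop.~1.5.10]{Law91} for the $O(1)$ escape probability from $\mathcal H'_x$, and then proves \eqref{e:ebarii} by the same hyperplane pair (implicitly taking $x$ to be the point of $\partial B$ nearest to $y$). You instead prove \eqref{e:ebarii} first and feed it into the lower bound of \eqref{e:ebari} via the last-exit decomposition through $\partial\Delta$, reusing the bound $P_x^{\mathbb Z^d}[\tilde H_B>H_\Delta]\ge cN^{-\gamma}$ from the proof of Lemma~\ref{l:baredelta}; this is a cleaner chain of dependencies. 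For \eqref{e:ebarii} itself, you replace the hyperplane-at-distance-$N$ plus Law91 argument by an explicit optional-stopping estimate on the projected martingale $U_n$ together with the last-exit identity $P_z^{\mathbb Z^d}[H_B<\infty]=\sum_x g(z,x)e_B(x)\le c(CN)^{2-d}\Cap(B)$, which lets you tune $C$ so the escape probability is bounded away from zero without invoking an external result. That capacity bound is a more self-contained way to get the constant-order factor, at the modest cost of having to establish the harmless fact that $\Delta$ separates $B$ from infinity in $\mathbb Z^d$ (true because the box boundary lies in $\Delta$, since $\dist(\partial\{0,\dots,N-1\}^d,B)\ge 2N^\gamma - O(1)>N^\gamma$). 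Both approaches yield the same constants up to a multiplicative factor.
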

\begin{proof}
  Since $\Cap(B_N) \asymp N^{d-2}$ (see \cite{Law91}, (2.16) p.53), in
  order to prove the lower bound in \eqref{e:ebari} we need to show that
  $P_x[{\tilde H_B=\infty}]\ge c N^{-1}$. This can be proved by similar
  arguments as above. We fix the hyperplane $\mathcal H_x$ as previously,
  and let $\mathcal H'_x$ be the hyperplane parallel to $\mathcal H_x$ at
  distance $N$. Then
  \begin{equation}
    \label{e:ebarlb}
    P_x[\tilde H_B = \infty] \ge
    P_x[\tilde H_{\mathcal H_x}> H_{\mathcal H'_x}]
    \cdot \inf_{y\in \mathcal H'_x} P_y[H_B=\infty].
  \end{equation}
  By the same reasoning as above, the first term is bounded from below by
  $c N^{-1}$ and the second term is of order constant, as follows easily
  from \cite[Proposition~1.5.10]{Law91} again.

  To prove the upper bound of \eqref{e:ebari}, we need to show that
  $P_x[\tilde H_B=\infty]\le N^{-1}$. To this end fix $\mathcal G_x$ as
  in the previous proof. Then
  \begin{equation}
    P_x[\tilde H_B=\infty]\le P_x[\tilde H_{\mathcal G_x}=\infty]\le
    c N^{-1}
  \end{equation}
  by e.g.~\cite[Lemma~7.5]{PT12}

  Finally, using the same notation as in \eqref{e:ebarlb}, for
  $y \in \partial \Delta$,
  \begin{equation}
    P_y[H_B=\infty]\ge
    P_y[H_{\mathcal H_x}> H_{\mathcal H'_x}]
    \, \inf_{y\in \mathcal H'_x} P_y[H_B=\infty].
  \end{equation}
  The first term is larger than $c N^{\gamma -1}$ by a martingale
  argument and the second is of order constant which proves
  \eqref{e:ebarii} and completes the proof.
\end{proof}

Finally, we control hitting probabilities of boundary points of $B$.
\begin{lemma}
  \label{l:hittingprobas}
  There is a $c<\infty$ such that for every $x\in \partial \Delta $ and
  $y\in \partial B$
  \begin{align}
    \label{e:ubtorus}
    P_x[X_{H_B}=y]&\le c N^{-\gamma (d-1)},\\
    \label{e:ubzd}
    P^{\mathbb Z^d}_x[X_{H_B}=y]&\le c N^{-\gamma (d-1)}.
  \end{align}
  In addition, for every $y\in \partial B$, there are at least
  $c^{-1}N^{\gamma (d-1)}$ points $x\in \partial \Delta $ such that
  \begin{align}
    \label{e:lbtorus}
    P_x[X_{H_B}=y]&\ge c^{-1} N^{-\gamma (d-1)},\\
    \label{e:lbzd}
    P^{\mathbb Z^d}_x[X_{H_B}=y]&\le c^{-1} N^{-\gamma (d-1)}.
  \end{align}
\end{lemma}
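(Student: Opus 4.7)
The plan is to treat upper and lower bounds separately, exploiting the fact that the harmonic measure from a point at distance $\asymp N^\gamma$ from the smooth surface $\partial B$ spreads over a patch of $\partial B$ of area $\asymp N^{\gamma(d-1)}$. Therefore, any single point $y$ carries mass of at most order $N^{-\gamma(d-1)}$, and sufficiently many points $x\in\partial\Delta$ carry mass of at least this order. All estimates reduce to Poisson kernel comparisons with balls and half-spaces via \cite[Proposition~1.5.10]{Law91}, combined with the projection/martingale trick already used in the proofs of Lemmas~\ref{l:baredelta}--\ref{l:ebar}.

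For the upper bounds \eqref{e:ubtorus}--\eqref{e:ubzd}, fix $x\in\partial\Delta$ and $y\in\partial B$, and let $\mathcal G_y\subset B$ be the ball of radius $\chi N$ tangent to $\partial B$ at $y$, as in the proof of Lemma~\ref{l:baredelta}. Since $\partial\mathcal G_y\cap\partial B=\{y\}$, a walk started outside $B$ can hit $\mathcal G_y$ for the first time at $y$ only if it has not previously entered $B$; consequently $\{X_{H_B}=y\}\subset\{X_{H_{\mathcal G_y}}=y\}$, and it suffices to bound the Poisson kernel of $\mathcal G_y$. Because $\chi N\gg N^\gamma$, this Poisson kernel, evaluated at a point at distance $\asymp N^\gamma$ from $\partial\mathcal G_y$, is well approximated by the Poisson kernel of the tangent half-space and satisfies $P^{\mathbb Z^d}_x[X_{H_{\mathcal G_y}}=y]\le c\,N^{-\gamma(d-1)}$ by \cite[Proposition~1.5.10]{Law91}. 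The torus version \eqref{e:ubtorus} follows identically: the walk started on $\partial\Delta$ reaches $B$ before any chance of feeling the periodicity of $\mathbb T_N^d$, so the local computation is unaffected.

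For the lower bounds \eqref{e:lbtorus}--\eqref{e:lbzd}, fix $y\in\partial B$ and let $\mathcal H_y$ denote the tangent hyperplane to $\partial B$ at $y$ as in Lemma~\ref{l:baredelta}. Define $S_y\subset\partial\Delta$ to consist of those $x$ whose orthogonal projection onto $\mathcal H_y$ lies within distance $c\,N^\gamma$ of $y$; the geometry of $\Delta_N$ (together with $|\partial\Delta|\asymp N^{d-1}$) gives $|S_y|\ge c^{-1}N^{\gamma(d-1)}$. For $x\in S_y$ the walk is at distance $\asymp N^\gamma$ from $y$ and, locally, $B$ looks like a half-space. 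A Poisson kernel lower bound for a single site of $\partial\mathcal G_y$, obtained by sandwiching $B$ between the inscribed ball $\mathcal G_y$ and the circumscribed ball $\mathcal G_y'$ of radius $\chi N+N^\gamma$ from Lemma~\ref{l:baredelta} and applying \cite[Proposition~1.5.10]{Law91}, then yields $P^{\mathbb Z^d}_x[X_{H_B}=y]\ge c^{-1}N^{-\gamma(d-1)}$ uniformly in $x\in S_y$. The torus bound \eqref{e:lbtorus} follows from the same construction since the whole event takes place in a region of diameter $\ll N$.

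The main obstacle is the lower bound: one must simultaneously (i) identify and count the set $S_y\subset\partial\Delta$ that can see $y$ through the $N^\gamma$-wide security layer, and (ii) establish the Poisson kernel lower bound uniformly over $x\in S_y$ for the rounded box, not merely a tangent ball or half-space. Although the three geometries agree on the scale $N^\gamma\ll\chi N$, one must control the error introduced by curvature and by $x$'s that are not directly above $y$. All ingredients are standard, available in \cite{Law91}, but the geometric bookkeeping on the rounded box requires care.
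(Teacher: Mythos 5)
Your route is genuinely different from the paper's: where the paper simply invokes \cite[Lemma~7.6]{PT12} for \eqref{e:ubzd}, \eqref{e:lbtorus}, \eqref{e:lbzd} and a separating--surface/Harnack argument from \cite[Proposition~7.7]{PT12} for \eqref{e:ubtorus}, you try to re-derive everything via direct Poisson-kernel comparisons with tangent balls and half-spaces. That is a legitimate plan, and the inclusion $\{X_{H_B}=y\}\subset\{X_{H_{\mathcal G_y}}=y\}$ you use for the upper bounds is correct (though your justification ``$\partial\mathcal G_y\cap\partial B=\{y\}$'' is false when $y$ lies on a rounded corner, where the two boundaries share a whole arc; the inclusion only needs $\mathcal G_y\subset B$ and $y\in\mathcal G_y$).

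The real gap is in \eqref{e:ubtorus}, the torus upper bound. You claim that ``the walk started on $\partial\Delta$ reaches $B$ before any chance of feeling the periodicity of $\mathbb T_N^d$, so the local computation is unaffected.'' This is not true: a walk started on $\partial\Delta$ can (and typically does, with non-negligible probability) enter $\Delta_N$ --- whose diameter is of order $N$ --- and wind around the torus before hitting $B$, so the event $\{X_{H_B}=y\}$ is emphatically not local. Indeed, $P^{\mathbb T}_x[H_{\mathcal G_y}<\infty]=1$ while $P^{\mathbb Z^d}_x[H_{\mathcal G_y}<\infty]<1$, so the two hitting measures have different total masses and a pointwise comparison needs an argument. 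For the lower bounds this doesn't matter (one only needs a single favorable local path), and your reasoning for \eqref{e:lbtorus}--\eqref{e:lbzd} is sound in spirit; but for the upper bound you must account for the many returns. A correct way to rescue your approach would be a last-exit or iterated-approach decomposition: each ``approach'' from distance $\asymp N^\gamma$ hits $\mathcal G_y$ before escaping with probability bounded below (gambler's ruin), and on each approach the probability of hitting precisely at $y$ is $\le cN^{-\gamma(d-1)}$ by the local $\mathbb Z^d$ kernel bound, so the geometric series closes. The paper avoids all of this by a different mechanism: \cite[Proposition~7.7]{PT12} gives a separating surface $\hat D$ on which $P_z[X_{H_B}=y]\asymp P_z[X_{H_B}=y']$ for $\asymp N^{\gamma(d-1)}$ nearby $y'$, this comparability transfers to $x$ because every torus path from $x$ to $\{y,y'\}$ crosses $\hat D$, and then the bound follows from normalization $\sum_{y'}P_x[X_{H_B}=y']\le 1$ --- a robust argument that sidesteps the torus-vs-$\mathbb Z^d$ mass discrepancy entirely. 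Finally, a minor point: \cite[Proposition~1.5.10]{Law91} (which the paper uses for annulus-escape probabilities) is not a Poisson-kernel estimate, so your repeated citations of it for half-space/ball Poisson kernel bounds are misplaced; the relevant estimates are exactly the content of \cite[Lemma~7.6]{PT12}.
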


\begin{proof}
  The lower bounds \eqref{e:lbtorus}, \eqref{e:lbzd} follow directly from
  \cite[Lemma 7.6(ii)]{PT12} by taking $s=N^\gamma $. The upper bound
  \eqref{e:ubzd} is a consequence of \cite[Lemma 7.6(i)]{PT12}.

  Finally, to show \eqref{e:ubtorus}, let $y_1, y_2\in \partial B$ be two
  points at distance smaller than $\delta N^{\gamma }$ for some
  sufficiently small $\gamma $. By \cite[Proposition 7.7]{PT12}, there is
  a `surface' $\hat D=\hat D(y_1,y_2)$ in $\mathbb Z^d$ separating
  $\{y_1,y_2\}$ from $x$ so that for every $z\in \hat D\setminus B$
  \begin{equation}
    \label{e:comparable}
    c P_z[X_{H_B}=y_1]\le P_z[X_{H_B}=y_2]\le c^{-1}P_z[X_{H_B}=y_1]
  \end{equation}
  for some sufficiently small $c$ independent of $y_1$, $y_2$. Since
  every path in $\mathbb T_N^d\setminus B$ from $x$ to $\{y_1,y_2\}$ must
  pass through $\hat D\setminus B$, using the strong Markov property on
  $H_{\hat D}$, it follows that $z$ can be replaced by $x$ in
  \eqref{e:comparable}. As consequence, for every $y\in \partial B$ there
  are at least $c(\delta N^{\gamma })^{(d-1)}$ points $y'$ on $\partial B$
  with
  \begin{equation}
    P_x[X_{H_B}=y']\ge c P_x[X_{H_B}=y],
  \end{equation}
  from which \eqref{e:ubtorus} easily follows.
\end{proof}

\section{Equilibrium measure}
\label{s:equilibrium}
In this section we show that the equilibrium measures of the Markov
chains $Y$ and $Z$ that we defined in Section~\ref{s:toruscoupling}
coincide as required by Theorem~\ref{t:couplingmc}. This may sound
surprising at first, since the periodic boundary conditions in the torus
are felt in the exit probabilities of macroscopic boxes.

\begin{lemma}
  \label{l:invariantmeasure}
  Let $\pi $ be the probability measure on $\Sigma $ given by
  \begin{equation}
    \label{e:pi}
    \pi (\vec x) =   \bar e_B^\Delta (x_1)  P_{x_1}[X_{H_\Delta} =x_2], \qquad
    \vec x=(x_1,x_2)\in \Sigma ,
  \end{equation}
  Then $\pi $ is the invariant measure for both $Y$ and $Z$.
\end{lemma}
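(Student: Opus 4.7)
The plan is to exploit a product structure of $\pi$ and reduce the invariance under each chain to a single scalar identity on $\partial B$, which we then verify separately using reversibility. Observe that both transition kernels factor as
\[
P\big((x_1,x_2),(y_1,y_2)\big) = R(x_2,y_1)\,P_{y_1}[X_{H_\Delta}=y_2],
\]
where $R$ is the chain-specific ``re-entry to $B$'' kernel: $R^Y(x_2,y_1)=P_{x_2}[X_{H_B}=y_1]$ for $Y$, and $R^Z$ includes the escape-and-restart term from \eqref{e:distZ} for $Z$. Since $\pi(\vec y)=\bar e_B^\Delta(y_1)P_{y_1}[X_{H_\Delta}=y_2]$, cancelling the common factor $P_{y_1}[X_{H_\Delta}=y_2]$ reduces the equation $\pi=\pi P$ to
\[
\bar e_B^\Delta(y_1) = \sum_{x_2\in\partial\Delta}\pi_2(x_2)\,R(x_2,y_1),
\qquad \pi_2(x_2):=\sum_{x_1\in\partial B}\bar e_B^\Delta(x_1)P_{x_1}[X_{H_\Delta}=x_2].
\]

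For chain $Y$, I would identify $\pi$ as the long-run empirical distribution of the ergodic chain $(Y_i)_{i\ge 1}$ obtained by running the torus walk in its uniform stationary distribution $\sigma$. At a time $t$ with $X_t=x_1\in\partial B$ that happens to be an $R_i$-event, the Markov property decouples past and future given $X_t$. By reversibility of the torus walk with respect to $\sigma$, the past event ``the most recent visit to $B\cup\Delta$ before $t$ was in $\Delta$'' has the same probability as its mirror future event $\{\tilde H_B>H_\Delta\}$, namely $P_{x_1}[\tilde H_B>H_\Delta]$. Multiplying by the independent future factor $P_{x_1}[X_{H_\Delta}=x_2]$ yields the empirical rate $\sigma(x_1)P_{x_1}[\tilde H_B>H_\Delta]P_{x_1}[X_{H_\Delta}=x_2]$, which after normalization is precisely $\pi(\vec x)$; the ergodic theorem then identifies $\pi$ as the invariant measure.

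For chain $Z$, I would run the analogous argument inside the random interlacement process, which is stationary in the level $u$. Counting excursions with endpoints $(y_1,y_2)$ at level $u$ two ways---first trajectory-by-trajectory via the i.i.d.\ walks $X^{(i)}$ started from $\bar e_B$, yielding $u\,\Cap(B)\,(\bar e_B G)(y_1)P_{y_1}[X_{H_\Delta}=y_2]$, where $G=\sum_{n\ge 0}(Q_0^Z)^n$ is the Green function of the sub-stochastic ``continuation'' kernel $Q_0^Z(x_1,y_1):=\sum_{x_2}P_{x_1}[X_{H_\Delta}=x_2]\PZ_{x_2}[H_B<\infty,X_{H_B}=y_1]$ on $\partial B$; and second by recognizing these excursions as a Poisson point process on $\partial B\times\partial\Delta$ of intensity $u\,\Cap_\Delta(B)\,\bar e_B^\Delta(y_1)P_{y_1}[X_{H_\Delta}=y_2]$---produces the potential-theoretic identity
\[
\Cap(B)\,\bar e_B G = \Cap_\Delta(B)\,\bar e_B^\Delta \quad \text{on } \partial B,
\]
from which the invariance of $\pi$ for $Z$ follows. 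The principal obstacle is precisely this identity: it encodes the exact balance between trajectories ``leaking'' to infinity (and restarting from $\bar e_B$) and trajectories continuing their excursions. The hard part is verifying it from reversibility of SRW on $\mathbb Z^d$ applied to individual excursion paths, together with the potential-theoretic characterizations of $\bar e_B$ (escape from $B$) and $\bar e_B^\Delta$ (escape to $\Delta$ before return); by contrast, the $Y$ case is essentially immediate from the Markov property and reversibility of the torus walk.
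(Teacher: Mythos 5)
Your argument for the chain $Y$ is essentially the paper's: run the doubly-infinite stationary torus walk, use reversibility to convert the ``past'' event defining a return time into the ``future'' event $\{\tilde H_B > H_\Delta\}$, and invoke the ergodic theorem. The up-front reduction — factoring out $P_{y_1}[X_{H_\Delta}=y_2]$ and reducing invariance to a single identity on $\partial B$ — is a clean observation that the paper leaves implicit, and it is correct.

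The $Z$ case, however, contains a genuine gap. You correctly identify that everything reduces to the identity $\Cap(B)\,\bar e_B G = \Cap_\Delta(B)\,\bar e_B^\Delta$, and this is exactly what the paper proves; but you then explicitly leave it unproved (``the hard part is verifying it''), and the one heuristic route you sketch is flawed. Your ``second way'' of counting excursions relies on recognizing the collection of all excursion endpoints at level $u$ as a Poisson point process on $\partial B\times\partial\Delta$ with intensity $u\,\Cap_\Delta(B)\,\bar e_B^\Delta\otimes P_\cdot[X_{H_\Delta}=\cdot]$. That is not true: excursions belonging to the same interlacement trajectory are strongly dependent (the endpoint of one excursion determines the law of the next), so the excursion process is a Poisson \emph{cluster} process, not a PPP. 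Computing its first-moment intensity is precisely the content of the identity you are trying to derive, so the ``two ways'' counting is circular as stated. Your proposed fix (``reversibility applied to individual excursion paths'') also does not obviously work: reversing a single excursion from $\partial B$ to $\partial\Delta$ does not relate $\bar e_B G$ to $\bar e_B^\Delta$.

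The paper's actual proof of the $Z$ case needs a sharper tool: decompose the single trajectory $X^{(1)}$ by its \emph{last} visit $L=\sup\{n:X_n\in B\}$ to $B$, use Sznitman's fact that under $\PZ_{\bar e_B}$ the last-exit point $X_L$ is again $\bar e_B$-distributed (\cite[Proposition~1.8]{Szn12}), time-reverse the \emph{entire} trajectory between its first and last visit to $B$ (not individual excursions), and finally apply the Green-function identity $\sum_z e_B(z)\,g(z,x)=1$ for $x\in B$. This turns $\EZ_{\bar e_B}\big[\sum_{j\le T}\bbone_{X_{R_j}=x_1}\big]$ into $P_{x_1}[\tilde H_B>H_\Delta]/\Cap(B)$, which is what you want. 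Without the last-exit decomposition and the $X_L\sim\bar e_B$ fact, the reversibility argument has no handle on the random number of excursions $T^{(1)}$, and the proof does not close.
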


\begin{proof}
  To see that $\pi $ is invariant for $Y$ consider the stationary random
  walk $(X_i)_{i\in \mathbb Z}$ (note the doubly infinite time indices)
  on $\mathbb T_N^d$. Let $\mathcal R$ be the set of `returns to $B$' for
  this walk,
  \begin{equation}
    \mathcal R = \{n\in \mathbb Z:X_n\in B, \exists m<n, X_m\in \Delta ,
    \{X_{m+1},\dots,X_{n-1}\}\subset (B\cup \Delta)^c\},
  \end{equation}
  $\mathcal D$ the set of `departures'
  \begin{equation}
    \mathcal D = \{n\in \mathbb Z: X_n\in \Delta , \exists m \in \mathcal
    R, m<n, \{X_m,\dots,X_{n-1}\}\in \Delta^c\},
  \end{equation}
  and write $\mathcal R=\{\bar R_i\}_{i\in \mathbb Z}$,
  $\mathcal D = \{\bar D_i\}_{i\in \mathbb Z}$ so that
  $\bar R_i< \bar D_i < \bar R_{i+1}$, $i\in \mathbb Z$, and
  \begin{equation}
    \label{e:RDbarRD}
    \bar R_0 < \inf\{i\ge 0: X_i\in \Delta \} < \bar R_1.
  \end{equation}
  Observe that by this convention the sequence
  $(\bar R_i,\bar D_i)_{i\ge 1}$ agrees with $(R_i,D_i)_{i\ge 1}$ defined
  in \eqref{e:excursions}. Remark also that $\bar R_0$ might be
  non-negative in general, but $\bar R_{-1}<0$.

  Due to the stationarity and the reversibility of $X$, for every
  $\vec x= (x_1,x_2)$,
  \begin{equation}
    \begin{split}
      \label{e:calR}
      P&[n\in \mathcal R, X_n = x_1]
      \\&= P[X_n=x_1, \exists m<n, X_m\in
        \Delta, \{X_{m+1},\dots,X_{n-1}\}\subset (B\cup \Delta )^c]
      \\& = N^{-d} P_{x_1}[\tilde H_B > H_\Delta ].
    \end{split}
  \end{equation}
  By the ergodic theorem, the stationary measure $\pi_Y$ of $Y$ satisfies
  \begin{equation}
    \pi_Y (\{x_1\}\times \partial \Delta )
    = \lim_{k\to \infty}\frac 1k \sum_{i=1}^k
    \bbone\{X_{R_i}=x_1\}
    = \lim_{m\to\infty}
    \frac{m^{-1}\sum_{n=1}^m \bbone\{n\in \mathcal R, X_n=x_1\}}
    {m^{-1}\sum_{n=1}^m \bbone\{n\in \mathcal R\}},
  \end{equation}
  where we used the observation below \eqref{e:RDbarRD} for the last
  equality. Applying the ergodic theorem for the numerator and denominator
  separately and using \eqref{e:calR} yields
  \begin{equation}
    \pi_Y (\{x_1\}\times \partial \Delta )
    = \frac{P_{x_1}[\tilde H_B>H_\Delta ]}
    {\sum_{y\in \partial B}P_{y}[\tilde H_B>H_\Delta ]}
    = \bar e_B^\Delta (x_1).
  \end{equation}
  By the strong Markov property,
  $\pi_Y(\vec x) = \pi_Y(\{x_1\}\times \partial \Delta ) P_{x_1}[{H_\Delta=x_2}]$
  and thus $\pi_Y=\pi $ as claimed.

  We now consider the Markov chain $Z$. This chain is defined from the
  i.i.d.~sequence of random walks $X^{(i)}$. Each of these random walks
  give rise to a random-length block of excursions distributed as
  $\{(X^{(1)}_{R^{(1)}_i},X^1_{D^{(1)}_i}):i=1,\dots,T^{(1)}\}$. The
  invariant measure $\pi_Z$ of $Z$ can thus be written as
  \begin{equation}
    \pi_Z(\vec x) =
    \frac 1 {\EZ_{\bar e_B} T^{(1)}}
    \EZ_{\bar e_B}\bigg[\sum_{i=1}^{T^{(1)}}
      \bbone_{X^{(1)}_{R^{(1)}_i}=x_1}\bigg]
    {P_{x_1}[X_{H_\Delta }=x_2]}, \qquad \vec x = (x_1,x_2).
  \end{equation}
  To show that $\pi_Z=\pi $ it is thus sufficient to show that the middle
  term is proportional to $P_{x_1}[\tilde H_B > H_\Delta ]$, since the
  first term will then be the correct normalizing factor.

  To simplify the notation we write $X$, $T$, $R_j$ for $X^{(1)}$,
  $T^{(1)}$, $R_j^{(1)}$, and
  extend $X$ to a two-sided random walk on $\mathbb Z^d$ by requiring the
  law of $(X_{-i})_{i\ge 0}$ to be
  $\PZ_{X_0}[\, \cdot \, | \tilde H_B =\infty]$, conditionally
  independent of $(X_i)_{i\ge 0}$. We denote by $L=\sup\{n:X_n\in B\}$
  the time of the last visit of~$X$ to~$B$. Then,
  \begin{equation}
    \begin{split}
      \label{e:Tia}
      &\EZ_{\bar e_B}\Big[\sum_{j=1}^{T}
        \bbone_{X_{R_j}=x_1}\Big]
      =
      \sum_{y\in \partial B}\sum_{z\in \partial B} \bar e_B(y)
      \EZ_y\Big[\bbone_{X_L=z}\sum_{j=1}^{T}
        \bbone_{X_{R_j}=x_1}\Big]
      \\&=
      \sum_{y\in \partial B}\sum_{z\in \partial B} \sum_{n=0}^\infty
      \bar e_B(y)
      \PZ_y\bigg[
        \begin{aligned}
          &X_n=x_1, X_L=z,\\&\exists m\in \mathbb Z: m<n, X_m\in \Delta ,
          \{X_{m+1},\dots,X_{n-1}\}\subset (B\cup \Delta)^c
        \end{aligned}
        \bigg].
    \end{split}
  \end{equation}
  According to \cite[Proposition~1.8]{Szn12}, under $\PZ_{\bar e_B}$,
  $X_L$ has also distribution $\bar e_B$. Hence, by reversibility, this
  equals
  \begin{equation}
    \begin{split}
      &= \sum_{y\in \partial B}\sum_{z\in \partial B} \sum_{n=0}^\infty
      \bar e_B(z)
      \PZ_z\bigg[
        \begin{aligned}
          &X_n=x_1, X_L=y,\\&\exists m>n: X_m\in \Delta ,
          \{X_{n+1},\dots,X_{m-1}\}\subset (B\cup \Delta)^c
        \end{aligned}
        \bigg]
      \\&=
      \sum_{z\in \partial B}
      \sum_{n=0}^\infty
      \bar e_B(z)
      \PZ_z\bigg[
        \begin{aligned}
          &X_n=x_1, \\&\exists m>n: X_m\in \Delta ,
          \{X_{n+1},\dots,X_{m-1}\}\subset (B\cup \Delta)^c
        \end{aligned}
        \bigg]
      \\&=
      \sum_{z\in \partial B}
      \sum_{n=0}^\infty
      \bar e_B(z)
      \PZ_z[X_n=x_1]P_{x_1}[\tilde H_B > H_\Delta ].
    \end{split}
  \end{equation}
  Introducing the Green function $g(x,y)=\sum_{n=0}^\infty \PZ_x[X_n=y]$
  and using the identity $\sum_{z} e_B(z)g(z,x)=1$ (see \cite[Proposition
      1.8]{Szn12}), this equals to
  \begin{equation}
      \label{e:Tib}
      = \sum_{z\in \partial B}
      \bar e_B(z) g(z,x_1)
      P_{x_1}[\tilde H_B > H_\Delta ]
      =
      P_{x_1}[\tilde H_B > H_\Delta ]/\Cap(B).
  \end{equation}
  This shows the required proportionality and completes the proof of the
  lemma.
\end{proof}

We will need the following estimate on the measure $\pi $.

\begin{lemma}
  \label{l:pi_dB}
  For every $y\in \partial \Delta $
  \begin{equation}
    \pi (\partial B \times \{y\}) \le C N^{1-d}.
  \end{equation}
\end{lemma}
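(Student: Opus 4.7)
My plan is to derive the identity
\[
  \pi(\partial B \times \{y\}) = \frac{P_y[H_B < \tilde H_\Delta]}{\Cap_\Delta(B)}
\]
and then control each factor separately. Lemma~\ref{l:baredelta} already gives $\Cap_\Delta(B) \ge c N^{d-1-\gamma}$, and I will show a gambler's-ruin bound $P_y[H_B < \tilde H_\Delta] \le CN^{-\gamma}$; dividing, the desired $\pi(\partial B \times \{y\}) \le CN^{1-d}$ drops out. Note that the naive bound $\pi(\partial B \times \{y\}) \le CN^{1-d} \sum_{x_1 \in \partial B} P_{x_1}[X_{H_\Delta}=y]$ obtained from the uniform bound on $\bar e_B^\Delta$ does not suffice, since the latter sum is not obviously $O(1)$.

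To prove the identity, I repeat the ergodic/time-reversal argument used for $\pi_Y$ in the proof of Lemma~\ref{l:invariantmeasure}. Summing over the first coordinate in the ergodic characterization gives
\[
  \pi(\partial B \times \{y\}) = \lim_{k\to\infty} \frac{1}{k}\sum_{i=1}^k \bbone\{X_{D_i}=y\} = \frac{P[0\in\mathcal D, X_0=y]}{P[0\in\mathcal R]},
\]
and the denominator equals $N^{-d}\Cap_\Delta(B)$ by the computation in \eqref{e:calR}. The event $\{0\in\mathcal D, X_0 = y\}$ under the stationary SRW says that $X_0 = y$ and, looking backward, the walker visited $B$ since its most recent visit to $\Delta$; reversibility of the SRW on $\mathbb T_N^d$ (uniform stationary measure on a regular graph) turns this backward statement into the forward event $\{X_0 = y,\, H_B < \tilde H_\Delta\}$, yielding $P[0\in\mathcal D, X_0 = y] = N^{-d}P_y[H_B < \tilde H_\Delta]$.

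For the gambler's ruin bound, since $y\in\Delta\setminus B$, conditioning on $X_1$ gives
\[
  P_y[H_B < \tilde H_\Delta] \le \frac{1}{2d}\sum_{y'\sim y,\, y'\in\Delta^c} P_{y'}[H_B < H_\Delta],
\]
and every such $y'$ lies in the security zone at distance $1$ from $\partial\Delta$ and at most $N^\gamma$ from $\partial B$. Imitating the upper-bound argument in the proof of Lemma~\ref{l:baredelta}, I introduce the inscribed ball $\mathcal G = B(x_0,\chi N) \subset B$ tangent to $\partial B$ at the closest point $y_\star\in\partial B$ to $y$ together with the concentric ball $\mathcal G' = B(x_0, \chi N + N^\gamma)$; an annular gambler's-ruin estimate invoking \cite[Proposition~1.5.10]{Law91} then bounds $P_{y'}[H_B < H_\Delta]$ by $O(N^{-\gamma})$.

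The main obstacle is the first step: the time-reversal identification of $\{0\in\mathcal D, X_0=y\}$ with the forward event $\{H_B < \tilde H_\Delta\}$ requires carefully unwinding the definitions of $\mathcal R$ and $\mathcal D$ under time reversal. Once reversibility of the SRW on the torus is accepted, this proceeds exactly along the lines of the corresponding computation for $\pi_Y$ in Lemma~\ref{l:invariantmeasure}.
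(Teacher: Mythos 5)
Your overall strategy is the same as the paper's. You derive the identity $\pi(\partial B\times\{y\}) = P_y[H_B<\tilde H_\Delta]/\Cap_\Delta(B)$ via the ergodic theorem and time reversal for the stationary walk, then bound the denominator with Lemma~\ref{l:baredelta} and the numerator by a gambler's-ruin estimate. This is exactly what the authors do (they phrase it as computing $P[n\in\mathcal D,X_n=y]=N^{-d}P_y[\tilde H_\Delta>H_B]$ and $P[n\in\mathcal D]=N^{-d}\Cap_\Delta(B)$ and dividing), and your observation that the naive $\sup\bar e^\Delta_B$-bound doesn't close by itself is the correct motivation for this route.

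However, the concrete gambler's-ruin step as you've written it has a monotonicity problem. You introduce $\mathcal G=B(x_0,\chi N)\subset B$ tangent to $\partial B$ at $y_\star$ and the concentric $\mathcal G'=B(x_0,\chi N+N^\gamma)$, and then invoke annular gambler's ruin to bound $P_{y'}[H_B<H_\Delta]$ from above. But since $\mathcal G\subset B$ one has $H_{\mathcal G}\ge H_B$, so $\{H_B<H_\Delta\}\supseteq\{H_{\mathcal G}<\cdot\}$, which is the wrong-way inclusion for an \emph{upper} bound: the walker can hit $B\setminus\mathcal G$ without ever touching $\mathcal G$. In the paper's Lemma~\ref{l:baredelta} the same pair of balls works because the walker starts on $\partial B$ and the event to be bounded is $\{\tilde H_B>H_\Delta\}$ (escape), which only gets larger when $B$ is replaced by the smaller $\mathcal G$ and $\Delta$ by the larger $(\mathcal G')^c$. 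Here the walker starts near $\partial\Delta$ and the event $\{H_B<H_\Delta\}$ (reach $B$) gets \emph{smaller} when $B$ is replaced by $\mathcal G$. So the step ``invoking Proposition~1.5.10 of Lawler'' is not justified by these inclusions; one needs either to additionally control $P_{y'}[H_{B\setminus\mathcal G}<H_\Delta,\,H_{\mathcal G}>H_\Delta]$ (e.g., by showing the walker is very unlikely to travel a macroscopic distance along the $N^\gamma$-shell), or to argue via a radial martingale/Harnack-type comparison directly. Note the paper itself is terse here (it just says ``by the same argument as in Lemma~\ref{l:baredelta}''), and indeed that argument does not transfer verbatim; the estimate $P_y[\tilde H_\Delta>H_B]\le cN^{-\gamma}$ is correct but requires a slightly different setup than the one you copied.
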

\begin{proof}
  By similar arguments as in the proof of Lemma~\ref{l:invariantmeasure},
  using the same notation,
  \begin{equation}
    \begin{split}
      P[n\in \mathcal D, X_n=y]&=P\big[X_n=y,\exists\,m<n,X_m\in B,
        \{X_{m+1},\dots,X_{n-1}\}\in (B \cup \Delta )^c\big]
      \\&=N^{-d} P_y[\tilde H_\Delta > H_B]
      \\&\le c N^{-d-\gamma },
    \end{split}
  \end{equation}
  since, by the same argument as in the proof of Lemma~\ref{l:baredelta},
  $P_y[\tilde H_\Delta > H_B]\le c N^{-\gamma }$. Further,
  \begin{equation}
    P(n\in \mathcal D)=P(n\in \mathcal R)=\sum_{x\in \partial B} N^{-d}
    P_x[\tilde H_B > H_\Delta ] \asymp c N^{-\gamma -1},
  \end{equation}
  by the estimates in the proof of Lemma~\ref{l:baredelta} again.
  Therefore,
  \begin{equation}
    \pi (\partial B \times \{y\})=P[X_n=y|n\in \mathcal D] \le c N^{1-d},
  \end{equation}
  and the proof is completed.
\end{proof}

\section{Mixing times}
\label{s:mixing}
The next ingredient of Theorem~\ref{t:couplingmc} are the mixing times
$T_Y$ and $T_Z$ of the Markov chains $Y$ and $Z$. They are estimated in
the following lemma.

\begin{lemma}
  \label{l:mixingtimes}
  There is a constant $c$ such that
  \begin{align}
    T_Z &\le c N^{1-\gamma },\label{e:CPRI}
    \\
    T_Y &\le c N^{1-\gamma}.
    \label{e:CPRW}
  \end{align}
\end{lemma}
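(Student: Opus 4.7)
The heuristic underlying both bounds is the same: a single excursion between $B$ and $\Delta $ takes time of order $N^{1+\gamma }$ on average (the stationary frequency of excursion-starting instants was shown to be $\asymp N^{-1-\gamma }$ in the proof of Lemma~\ref{l:pi_dB}), whereas the underlying simple random walk on $\mathbb T_N^d$ mixes on time-scale $N^2$; one therefore expects $N^{2}/N^{1+\gamma }=N^{1-\gamma }$ excursions to suffice to mix either excursion-indexed chain. I will exploit this heuristic in two different ways, using the explicit renewal structure of $Z$ for \eqref{e:CPRI} and the mixing of the ambient torus walk for \eqref{e:CPRW}.

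For \eqref{e:CPRI}, the plan is to verify a one-step Doeblin minorization directly from the transition kernel \eqref{e:distZ}. Retaining only the `trajectory-resetting' summand and invoking Lemma~\ref{l:ebar}, I obtain
\[
P_Z(\vec x,\vec y) \;\ge\; \PZ_{x_2}[H_B=\infty]\cdot \bar e_B(y_1)\,\PZ_{y_1}[X_{H_\Delta}=y_2] \;\ge\; \useconstant{c:exit_B}\, N^{\gamma -1}\,\nu_Z(\vec y)
\]
uniformly in $\vec x,\vec y\in \Sigma $. Iterating the standard single-step Doeblin coupling (at each step the chain renews to an independent sample from $\nu_Z$ with probability at least $\useconstant{c:exit_B} N^{\gamma -1}$) yields $\|P_Z^n(\vec x,\cdot)-\pi \|_{TV}\le (1-\useconstant{c:exit_B} N^{\gamma -1})^n$ for every $\vec x$, which is at most $1/4$ once $n\ge cN^{1-\gamma }$, proving \eqref{e:CPRI}.

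For \eqref{e:CPRW} no such one-step minorization of $P_Y$ by $\pi $ is available, since on $\mathbb T_N^d$ the walk starting in $\partial \Delta $ returns to $B$ with probability one and its hitting distribution concentrates on the `shadow' described in Lemma~\ref{l:hittingprobas}. Instead I would propagate the mixing of the ambient torus walk through to $Y$. Set $n=CN^{1-\gamma }$ with $C$ large and write $Y_n=(X_{R_n},X_{D_n})$. The plan has two steps. First, bounding the typical excursion length from below via Lemma~\ref{l:baredelta} and applying the Chernov-type concentration proved in the appendix to the sum of excursion lengths, I show that $R_n\ge c'N^2$ with probability at least $7/8$. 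Second, conditionally on this event the ambient walk has surpassed its $\Theta (N^2)$-mixing time, so by a standard torus-coupling argument $X_{R_n}$ is within $\varepsilon $ in total variation of the law of the stationary torus walk conditioned on being at an excursion start. By the identity \eqref{e:calR} derived in the proof of Lemma~\ref{l:invariantmeasure}, this Palm law is exactly $\bar e_B^\Delta $, the first marginal of $\pi $; since the second coordinate $X_{D_n}$ is then drawn via the kernel $P_{\cdot}[X_{H_\Delta }=\cdot]$ that $P_Y$ and $\pi $ share, this yields $\|P_Y^n(\vec x,\cdot)-\pi \|_{TV}\le 1/4$ and hence \eqref{e:CPRW}.

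The main technical obstacle lies in this second step: the index $n$ is deterministic while the ambient-walk time $R_n$ at which mixing can be invoked is random, and some care is needed to condition on $R_n$ and apply the strong Markov property so as to pass from `the torus walk is close to stationary at the random time $R_n$' to `the first coordinate of $Y_n$ is close to $\bar e_B^\Delta $'.
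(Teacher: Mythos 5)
Your treatment of \eqref{e:CPRI} is correct and is essentially the argument in the paper: you formulate it as a one-step Doeblin minorization $P_Z(\vec x,\cdot)\ge \useconstant{c:exit_B}N^{\gamma-1}\nu_Z(\cdot)$ coming from the $\{H_B=\infty\}$ summand of \eqref{e:distZ} together with Lemma~\ref{l:ebar}; the paper realizes the same minorization as an explicit coupling fed into Lemma~\ref{l:couplingmixing}, but the content is identical and both give $T_Z\le cN^{1-\gamma}$.

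Your approach to \eqref{e:CPRW} is genuinely different from the paper's, and it has a gap that the phrase ``standard torus-coupling argument'' does not close. The obstruction is precisely the one you flag but leave open, and I do not think it is a technicality. Two related difficulties:
(i) If you couple two ambient walks $X,X'$ on $\mathbb T_N^d$ so that they coalesce by a (random) time of order $N^2$, the associated excursion chains $Y,Y'$ do \emph{not} coalesce at a common deterministic index. By the time of coalescence the two walks have each completed $\Theta(N^{1-\gamma})$ excursions, and the two excursion counters can differ by an amount of the same order; after coalescence one only gets $Y_{k+j}=Y'_{k'+j}$ for the (random, generally distinct) offsets $k,k'$, which does not bound $\|\mathcal L(Y_n)-\mathcal L(Y'_n)\|_{TV}$ for deterministic $n$.
(ii) The alternative route you describe --- condition on $\{R_n\ge c'N^2\}$ and argue that $X_{R_n}$ must then be near its Palm law $\bar e_B^\Delta$ --- does not follow from ambient mixing either. $R_n$ is a stopping time, and mixing controls the law of $X_t$ at \emph{deterministic} $t$, not at stopping times: conditioning on a trajectory-dependent event like $\{R_n\ge c'N^2\}$ (or on the value of $R_n$) can bias the law of $X_{R_n}$ arbitrarily, even when $R_n$ is much larger than the mixing time. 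No strong Markov manipulation by itself removes this bias. Thus ``the walk has run for $\gtrsim N^2$ steps'' does not yield ``$X_{R_n}$ is within $\varepsilon$ of $\bar e_B^\Delta$''.

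The paper avoids both difficulties by finding a per-excursion regeneration internal to the chain $Y$ itself, rather than importing mixing from the ambient walk. Concretely, with $G=\{x\in B:\dist(x,\partial B)\ge \chi N/2\}$, Claim~\ref{c:YregenA} shows that each excursion reaches $G$ with probability at least $c_1N^{\gamma-1}$, and Claim~\ref{c:YregenB} (Harnack) shows that conditionally on hitting $G$ the law of the next exit point $X_{H_\Delta}$ admits a uniform component of mass $\ge c_2^{-1}$ independent of where $G$ was entered. This produces a Doeblin-type coalescence of the \emph{second} coordinate of $Y$ with probability $\gtrsim N^{\gamma-1}$ per step, and since the transition kernel \eqref{e:Y_i} depends on the current state only through that coordinate, coalescence of the second coordinates propagates to coalescence of the chains; Lemma~\ref{l:couplingmixing} then gives $T_Y\le cN^{1-\gamma}$. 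If you want to keep the ambient-mixing flavor of your argument, you would at minimum have to control the difference of excursion counters under the ambient coupling, which I do not see how to do without in effect rediscovering the per-excursion regeneration.
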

\begin{proof}
  To bound the mixing times we use repeatedly the following lemma which
  can be found e.g.~in \cite[Corollary~5.3]{LPW09}.

  \begin{lemma}
    \label{l:couplingmixing}
    Let $(\mathcal X_i)_{i\ge 0}$ be an arbitrary Markov chain on a
    finite state space $\Sigma $. Assume that for every $x,y\in \Sigma $
    there exist a coupling $Q_{x,y}$ of two copies
    $\mathcal X, \mathcal X'$ of $\mathcal X$ starting respectively from
    $x$ and $y$, such that
    \begin{equation}
      \label{e:boundmix}
      \max_{x,y \in \Sigma } Q_{x,y}
      [\mathcal X_n \neq \mathcal X'_n] \leq 1/4.
    \end{equation}
    Then $T_{\mathcal X} \le n$.
\end{lemma}

To show \eqref{e:CPRI}, we thus consider two copies $Z_i$, $Z_i'$ of the
chain $Z$ starting respectively in $\vec x, \vec x'\in \Sigma $ and
define the coupling $Q_{\vec x,\vec x'}$ between them as follows. Let
$(\xi_i)_{i\ge 0}$ be a sequence of i.i.d. Bernoulli random variables
with $P[\xi_i=1]=\useconstant{c:exit_B} N^{\gamma -1}:=p_N$ where the
constant $\useconstant{c:exit_B}$ is as in~\eqref{e:ebarii}.
Given $Z_i=\vec x_i$, $Z'_i=\vec x_i'$, and given $\xi_i=1$ we choose
$Z_{i+1}=Z'_{i+1}$ distributed as
$\nu(\vec x) =\bar e_B(x_1)P_{x_1}[X_{H_\Delta } = x_2]$. On the other
hand, when $\xi_i=0$, we choose $Z_{i+1}$ and $Z'_{i+1}$ independently
with respective distributions $\mu_{\vec x_i} $ and $\mu_{\vec x_i'}$
where (cf.~\eqref{e:distZ})
  \begin{equation}
    \mu_{\vec x}(\vec y) =
    \big\{\PZ_{x_2}[H_B<\infty, X_{H_B}=y_1] +
      (\PZ_{x_2}[H_B=\infty]-p_N)\bar e_B(y_1)\big\}
    \frac{\PZ_{y_1}[X_{H_\Delta }=y_2]}{1-p_N}.
  \end{equation}
  The bound \eqref{e:ebarii} ensures that this is a well-defined
  probability distribution. If $Z_i=Z'_i$ for some $i$, then we let them
  move together, $Z_j=Z'_j$ for all $j\ge i$.

  It follows that
  \begin{equation}
    \max_{\vec x,\vec x'}Q_{\vec x,\vec x'}[Z_i\neq Z_{i'}]\le
    \mathbb P[\xi_j =0 \,\forall j<i] = (1-p_N)^j.
  \end{equation}
  Choosing now $j=c N^{1-\gamma }$ with $c$ sufficiently large and using
  Lemma~\ref{l:couplingmixing} yields \eqref{e:CPRI}.

  To show \eqref{e:CPRW}, let
  $G=G_N=\{x\in B_N: \dist(x,\partial B_N)\ge \chi N/2\}$. Intuitively,
  the excursions of the random walk into $G$ will play the same role as
  the `excursions of the random interlacements to infinity' played in the
  proof of \eqref{e:CPRI}. We need two technical claims
  \begin{claim}
    \label{c:YregenA}
    For some constant $c_1>0$ and all $N$ large,
    \begin{equation}
      \inf_{x\in \partial B}P_x[H_G<H_\Delta ]\ge c_1 N^{\gamma -1}.
    \end{equation}
  \end{claim}
  \begin{proof}
    Similarly as in Section~\ref{s:rwproperties}, let $\mathcal G_x$ be
    the ball with radius $\chi N$ contained in $B$ tangent to $\partial B$
    at $x$, and let $\mathcal G^1_x$, $\mathcal G^2_x$ be the balls
    concentric with $\mathcal G_x$ with radius $\chi N/2$ and
    $\chi N + N^\gamma $ respectively. Then
    $\mathcal G^2_x\subset \mathbb T_N^d\setminus \Delta$, and
    $\mathcal G^1_x\subset G$. Hence, using again
    \cite[Proposition~1.5.10]{Law91},
    \begin{equation}
      P_x[H_G<H_\Delta]\ge P_x[H_{\mathcal G_x^1}<H_{\mathcal G_x^2}]\ge c
      N^{1-\gamma }
    \end{equation}
    which shows the claim.
  \end{proof}
  \begin{claim}
    \label{c:YregenB}
    For some $c_2<\infty$ and all $N$ large,
    \begin{equation}
      \sup_{x\in \partial G}P_x[X_{H_\Delta } = y]
      \le c_2 \inf_{x\in \partial G}P_x[X_{H_\Delta } = y] \qquad
      \text{for all $y\in \partial \Delta $.}
    \end{equation}
  \end{claim}
  \begin{proof}
    For every $y\in \partial \Delta$, the function
    $x\mapsto P_x[X_{H_\Delta}=y]$ is harmonic on
    $\mathbb T_N^d\setminus \Delta$. The claim then follows by Harnack
    principle, see e.g.~\cite[Theorem~1.7.6]{Law91}.
  \end{proof}

  We continue the proof of \eqref{e:CPRW}. For $x\in \partial B$, let
  $\nu_x(\cdot)=P_x[X_{H_{G\cup \Delta}}\in {}\cdot{}]$. By
  Claim~\ref{c:YregenA}, $\nu_x(\partial G)\ge c_1 N^{\gamma -1}$, so we
  can find a sub-probability $\nu^\circ_x$ on $\partial G$ such that
  $\nu^\circ_x(\partial G)=c_1 N^{\gamma -1}$ and $\nu^\circ_x \le \nu_x$.
  For any $x\in \mathbb T_N^d$, let
  $\mu_x(\cdot) = P_x[X_{H_\Delta} \in {}\cdot{}]$, and let $\mu $ be the
  sub-probability on $\partial \Delta $ given by
  $\mu (y)=\inf_{x\in \partial G} \mu_x(y)$. It follows from
  Claim~\ref{c:YregenB} that $\mu (\partial \Delta)\ge c_2^{-1}$. For any
  non-trivial sub-probability measure $\kappa $, we denote by
  $\overline\kappa$ the probability measure obtained by normalizing
  $\kappa $.

  We an now construct the coupling required for the application of
  Lemma~\ref{l:couplingmixing}. Let $\vec x(0),\vec x'(0)\in \Sigma $ and
  define the coupling $Q_{\vec x, \vec x'}$ of two copies $Y$, $Y'$ of $Y$
  as follows. Let $Y_0=\vec x$, $Y'_0=\vec x'$, and let $(\xi_i)_{i\ge 0}$,
  $(\tilde \xi_i)_{i\ge 0}$ be two independent sequences of
  i.i.d.~Bernoulli random variables with $P[\xi_i=1]=c_1 N^{\gamma -1}$
  and $P[\tilde \xi_i=1]=\mu (\partial \Delta)$. Now continue inductively
  through the following steps
  \begin{enumerate}
    \item Given $Y_{k-1}=(Y_{k-1,1},Y_{k-1,2})$ and
    $Y'_{k-1}=(Y'_{k-1,1},Y'_{k-1,2})$, $k\ge 1$, choose $Y_{k,1}$,
    resp.~$Y'_{k,1}$, independently from $P_{Y_{k-1,2}}[X_{H_B}\in {}\cdot{}]$,
    resp.~$P_{Y'_{k-1,2}}[X_{H_B}\in {}\cdot{}]$.

    \item If $\xi_k=0$, choose $U_k$ according to
    $\overline{\nu_{Y_{k,1}} - \nu^\circ_{Y_{k,1}}}$, then $Y_{k,2}$
    according to $\mu_{U_k}$, and analogously $U'_k$ according to
    $\overline{\nu_{Y'_{k,1}} - \nu^\circ_{Y'_{k,1}}}$ and then $Y'_{k,2}$
    according to $\mu_{U'_k}$, independently.

    \item Otherwise, if $\xi_k=1$, choose $U_k$ according to
    $\overline{\nu_{Y_{k,1}}^\circ}$, and $U'_k$ according to
    $\overline{\nu_{Y'_{k,1}}^\circ}$, independently. If, in addition
    $\tilde \xi_k=1$, choose $Y_{k,2}=Y'_{k,2}$ according to
    $\overline \mu$. Otherwise, if $\tilde\xi_k=0$, choose $Y_{k,2}$
    according to $\overline{\mu_{U_k}-\mu }$, and $Y'_{k,2}$ according to
    $\overline{\mu_{U'_k}-\mu }$, independently.

    \item Finally, if for some $k$, $Y_{k,2}=Y'_{k,2}$, let $Y$ and $Y'$
    follow the same trajectory after $k$.
  \end{enumerate}
  It can be checked easily that these steps construct two copies of $Y$
  started from $\vec x$ and $\vec x'$ respectively. Moreover,
  \begin{equation}
    Q_{\vec x,\vec x'}[Y_k \neq Y'_k]
    \le \mathbb P[\xi_i\tilde \xi_i=0 \,\forall i<k]
    = (1-c_1N^{\gamma -1}\mu (\partial \Delta))^{k-1}.
  \end{equation}
  Observing that $\mu (\partial \Delta)\ge c_2^{-1}$, \eqref{e:CPRW}
  follows by taking $k = c N^{1-\gamma }$ with $c$ large enough and using
  Lemma~\ref{l:couplingmixing}.
\end{proof}

\section{Variance estimate}
\label{s:variance}
We continue to estimate the ingredients for the application of
Theorem~\ref{t:couplingmc}. Due to the form of the equilibrium measure
$\pi $ introduced in \eqref{e:pi}, it is suitable to fix the base measure
$\mu $ on $\Sigma $ as
\begin{equation}
  \mu (\vec x)=P_{x_1}[X_{H_\Delta }=x_2], \qquad \vec x=(x_1,x_2)\in \Sigma .
\end{equation}
Then (cf.~\eqref{e:defg},\eqref{e:transdensity} for the notation)
\begin{align}
  g(\vec x)&= \bar e_B^\Delta (x_1),
  \\
  \label{e:rhoY}
  \rho^Y(\vec x,\vec y)
  &= P_{x_2}[X_{H_B} =y_1]=:\tilde \rho^Y(x_2,y_1)
  \\
  \label{e:rhoZ}
  \rho^Z(\vec x, \vec y)
  &= P^{\mathbb Z^d}_{x_2}[X_{H_B} =y_1]
  + P^{\mathbb Z^d}_{x_2}[H_B =\infty] \bar e_B(y_1)
  =:\tilde \rho^Z(x_2,y_1).
\end{align}
Recall that $\rho^\circ_\vec x$ denotes the function
$\vec y \mapsto \rho^\circ(\vec y,\vec x)$; we use $\circ$ to stand for
either $Y$ or $Z$.

\begin{lemma}
  \label{l:varrho}
  There exist constants $c,C\in (0,\infty)$ such that and for every
  $\vec x\in \Sigma $
  \begin{equation}
    \label{e:81}
    c N^{1-d} N^{-\gamma (d-1)}
    \le \Var_\pi \rho^\circ_{\vec x}
    \le C N^{1-d} N^{-\gamma (d-1)}.
  \end{equation}
\end{lemma}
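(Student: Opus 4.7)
The plan is to compute the variance directly, exploiting that $\rho^\circ(\vec y, \vec x)$ depends on $\vec y$ only through its second coordinate (formulas~\eqref{e:rhoY}--\eqref{e:rhoZ}). Writing $h(y_2) := \tilde\rho^\circ(y_2, x_1)$ and $\pi_2$ for the marginal of $\pi$ on $\partial \Delta$, one gets
\begin{equation*}
  \Var_\pi \rho^\circ_{\vec x}
  = \sum_{y_2 \in \partial \Delta} \pi_2(y_2)\, h(y_2)^2 - g(\vec x)^2,
\end{equation*}
with $\pi(\rho^\circ_{\vec x}) = g(\vec x) = \bar e_B^\Delta(x_1) \asymp N^{1-d}$ by the $\pi$-invariance of both chains (Lemma~\ref{l:invariantmeasure}) together with Lemma~\ref{l:baredelta}. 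The squared-mean term is thus of order $N^{2-2d}$, so it suffices to show $\pi(h^2) \asymp N^{1-d} N^{-\gamma(d-1)}$, which dominates the squared mean by a factor $N^{(d-1)(1-\gamma)} \to \infty$.

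To bound $\pi(h^2)$, I would first establish the uniform two-sided estimate $\pi_2(y_2) \asymp N^{1-d}$. Repeating the stationary random walk computation from the proof of Lemma~\ref{l:pi_dB} gives
\begin{equation*}
  \pi_2(y_2) = P_{y_2}[\tilde H_\Delta > H_B] / \Cap_\Delta(B),
\end{equation*}
where $\Cap_\Delta(B) \asymp N^{d-1-\gamma}$ by Lemma~\ref{l:baredelta}, and $P_{y_2}[\tilde H_\Delta > H_B] \asymp N^{-\gamma}$ uniformly in $y_2 \in \partial \Delta$ by a martingale argument analogous to the one for $P_x[\tilde H_B > H_\Delta]$ in Lemma~\ref{l:baredelta}, but with the roles of $B$ and $\Delta$ exchanged. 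Combining this with the stationarity identity $\sum_{y_2} \pi_2(y_2)\, h(y_2) = g(\vec x) \asymp N^{1-d}$ gives $\sum_{y_2} h(y_2) \asymp 1$.

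The pointwise upper bound $h(y_2) \le c N^{-\gamma(d-1)}$ from~\eqref{e:ubtorus}--\eqref{e:ubzd} of Lemma~\ref{l:hittingprobas} (the extra additive term $P^{\mathbb Z^d}_{y_2}[H_B = \infty]\,\bar e_B(x_1) \lesssim N^{1-d}$ in $\rho^Z$ is of lower order, since $\gamma(d-1) < d-1$) then yields
\begin{equation*}
  \sum_{y_2} h(y_2)^2 \le \max_{y_2} h(y_2) \cdot \sum_{y_2} h(y_2) \le c N^{-\gamma(d-1)},
\end{equation*}
while the matching lower bound follows by restricting the sum to the $\ge c^{-1} N^{\gamma(d-1)}$ points $y_2$ with $h(y_2) \ge c^{-1} N^{-\gamma(d-1)}$ supplied by~\eqref{e:lbtorus}--\eqref{e:lbzd}. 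Multiplying by $\pi_2 \asymp N^{1-d}$ produces the desired two-sided bound on the second moment. The main obstacle I anticipate is the uniform lower bound on $\pi_2$, equivalently on $P_{y_2}[\tilde H_\Delta > H_B]$, which is not explicitly recorded in Section~\ref{s:rwproperties} (only the upper bound is used in Lemma~\ref{l:pi_dB}) and needs a two-sided martingale estimate on the projected walk; once this is in place, the rest of the argument is a short routine manipulation of the hitting-probability and equilibrium-measure estimates already proved.
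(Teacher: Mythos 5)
Your proposal takes essentially the same route as the paper's own proof: bound $\Var_\pi\rho^\circ_{\vec x}$ via the second moment, reduce to a one-variable sum over $y_2\in\partial\Delta$ against the marginal $\pi_2$, and combine the pointwise bound $h(y_2)\lesssim N^{-\gamma(d-1)}$ with a normalization of $\sum_{y_2}h(y_2)$ and the counting lower bound from Lemma~\ref{l:hittingprobas}. Two small points where you are more explicit than the paper, both to your credit: you note that the lower bound on the variance (not just the second moment) requires checking that the squared mean $g(\vec x)^2\asymp N^{2(1-d)}$ is of strictly lower order than $N^{1-d-\gamma(d-1)}$, a point the paper does address but rather tersely; and you correctly flag that the paper's argument implicitly needs a uniform \emph{lower} bound $\pi_2(y_2)\gtrsim N^{1-d}$, i.e.\ $P_{y_2}[\tilde H_\Delta>H_B]\gtrsim N^{-\gamma}$, whereas Lemma~\ref{l:pi_dB} only records the upper bound. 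That lower bound indeed follows by the same hyperplane/martingale argument as the lower bound in \eqref{e:pp}, so this is a genuine but easily fillable gap shared with the paper's proof. You also replace the paper's convexity optimization (maximize $\sum h^2$ subject to $0\le h\le cN^{-\gamma(d-1)}$, $\sum h\asymp 1$) with the more elementary inequality $\sum h^2\le(\max h)(\sum h)$, which is a cleaner way to the same conclusion. Overall the proposal is correct.
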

\begin{proof}
  An easy computation yields, using Lemma~\ref{l:pi_dB} for the last
  inequality,
  \begin{equation}
    \label{e:gbound}
    \begin{split}
      \Var_\pi \rho^\circ_{\vec x}
      &\le \sum_{\vec x'\in \Sigma } \pi (\vec x')
      \rho^\circ(\vec x',\vec x)^2
      \\&= \sum_{x'_2 \in \partial \Delta } \pi (\partial B\times \{x'_2\})
      \tilde \rho^\circ(x'_2,x_1)^2
      \\&\le C N^{1-d}\sum_{x'_2\in \partial \Delta }  \tilde \rho^\circ(x'_2,x_1)^2
    \end{split}
  \end{equation}
  Using Lemmas~\ref{l:ebar}, \ref{l:hittingprobas} in \eqref{e:rhoY} and
  \eqref{e:rhoZ}, we obtain that
  \begin{equation}
    \label{e:maxrho}
    \max_{x\in \partial B, y\in \partial \Delta }\tilde \rho ^\circ(y,x)\le c N^{-\gamma(d-1)}
  \end{equation}
  for both chains $\circ \in \{Y,Z\}$. Therefore
  \begin{equation}
    \Var_\pi \rho^\circ_{\vec x}
    \le C N^{1-d} \sup\Big\{\sum_{z\in \partial B} h^2(z)\,:\,
    {h\!:\!\partial B \to [0,cN^{-\gamma(d-1)}]},\sum_{z\in \partial B}
    h(z)=1\Big\} .
  \end{equation}
  The supremum is achieved by a function $h$ that takes the maximal value
  $cN^{-\gamma (d-1)}$ for as many points as it can, by a convexity
  argument. Hence,
  \begin{equation}
    \Var_\pi \rho^\circ_{\vec x}\le CN^{1-d} N^{\gamma (d-1)} (N^{-\gamma (d-1)})^2,
  \end{equation}
  and the upper bound follows.

  Finally, by Lemma~\ref{l:hittingprobas} and \eqref{e:rhoY},
  \eqref{e:rhoZ}, for every $x\in \partial B$ there are at least
  $c N^{\gamma (d-1)}$ points ${y\in \partial \Delta} $ such that
  $\tilde \rho^\circ(y,x)\ge c'N^{-\gamma (d-1)}$. Hence,
  $\pi \big((\rho_{\vec x}^\circ)^2\big)$ is larger than the left-hand side of
  \eqref{e:81}. Moreover, since $\pi$ is invariant for both Markov chains,
  it follows that $\pi (\rho_{\vec x}^\circ)^2=g(x)^2 \asymp N^{2(1-d)} $, by
  Lemma~\ref{l:baredelta}. Combining the last two claims, the lower bound
  follows.
\end{proof}

\section{Number of excursions}
\label{s:number}
The final ingredient needed for Theorem~\ref{t:couplingmc} is an estimate
on the number of excursion that the random walk typically makes before
the time $uN^d$, as well as on the corresponding quantity for the random
interlacements at level $u$.

Consider first the random walk on the torus. Define
\begin{equation}
  \mathcal N(t)=\sup\{i:R_i < t\}
\end{equation}
to be the number of excursions starting before $t$. We show that
$\mathcal N(t)$ concentrates around its expectation.

\begin{proposition}
  \label{p:concentr_Nt}
  Let $u > 0$ be fixed. There exist constants $c,C$ depending only on
  $\gamma $ and $\chi $ such that for every $N\ge 1$
  \begin{equation}
    P\big[ \big|\mathcal{N}(u N^d) - u \Cap_\Delta(B)\big| > \eta
    \Cap_\Delta (B)\big] \leq C \exp\{-c \eta^2 N^c\}.
  \end{equation}
\end{proposition}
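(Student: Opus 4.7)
The plan is to reduce the claim to concentration of cycle-time sums and apply Chernov/Bernstein-type estimates to the resulting Markov-chain additive functional plus martingale, relying on the machinery already developed in Sections~\ref{s:rwproperties}--\ref{s:mixing} for the excursion chain $Y$.

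First I would use the duality $\{\mathcal N(uN^d)<k\}=\{R_k>uN^d\}$ and $\{\mathcal N(uN^d)\ge k\}=\{R_k\le uN^d\}$ to reduce the claim to concentration of $R_k=\sum_{i=1}^k\tau_i$, with cycle times $\tau_i=R_i-R_{i-1}$, around $kN^d/\Cap_\Delta(B)$ for $k$ in an interval of width $\eta\Cap_\Delta(B)$ around $u\Cap_\Delta(B)$. By the strong Markov property applied at $R_{i-1}$, the conditional law of $\tau_i$ depends only on the starting point $X_{R_{i-1}}\in\partial B$. Standard random-walk hitting-time estimates yield a uniform exponential tail $P_y[\tau_1>t]\le Ce^{-ct/N^2}$, hence also $\tau(y):=E_y[\tau_1]\asymp N^{1+\gamma}$ and $\Var_y\tau_1\lesssim N^{2(1+\gamma)}$ uniformly in $y\in\partial B$.

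I would then split $R_k-kE_\pi\tau=S_1+S_2$, where
\begin{equation*}
  S_1=\sum_{i=1}^k\bigl(\tau(Y_{i-1})-E_\pi\tau\bigr),\qquad
  S_2=\sum_{i=1}^k\bigl(\tau_i-\tau(Y_{i-1})\bigr).
\end{equation*}
The summand $S_1$ is an additive functional of the excursion chain $Y$, whose mixing time is $O(N^{1-\gamma})$ by Lemma~\ref{l:mixingtimes}, while $S_2$ is a martingale in the natural filtration generated by the underlying walk. Plugging the bounded-range function $\tau(\cdot)$ into the appendix's Chernov-type bound~\eqref{e:apph} applied to $Y$ controls $S_1$, and applying a martingale Bernstein inequality with the conditional variance $\Var_y\tau_1$ and the exponential tail above controls $S_2$. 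Inserting the scales from the previous paragraph ($k\asymp N^{d-1-\gamma}$, $E_\pi\tau\asymp N^{1+\gamma}$, mixing time $N^{1-\gamma}$), both tails are bounded by $C\exp\{-c\eta^2 N^{c'}\}$ for some $c'>0$ depending only on $\gamma$, and the proposition follows.

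The main technical obstacle is keeping the exponent a strictly positive power of $N$, in particular in the critical case $d=3$. Applying a generic Markov-chain Bernstein bound directly to $\mathcal N(uN^d)$ as an additive functional of the torus walk (with mixing time $N^2$ and single-step variance $\asymp N^{-1-\gamma}$, after augmenting the walk with a label $\ell_n\in\{B,\Delta\}$ recording the most recently visited of the two sets) is considerably lossier in low dimensions: it misses the near-Poissonian spacing of the events $\{n\in\mathcal R\}$ and produces an exponent that degenerates for $d=3$. The renewal-based cycle decomposition above captures this spacing and delivers the correct scaling. The remaining verifications---the uniform exponential tail of $\tau_1$ and the Harnack-type regularity of the conditional mean $\tau(y)$---are routine given the potential-theoretic estimates of Section~\ref{s:rwproperties}.
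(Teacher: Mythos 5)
Your proposal shares the paper's first step—reducing concentration of $\mathcal N(uN^d)$ to concentration of $R_k$ via the duality $\{\mathcal N(t)<k\}=\{R_k>t\}$—but then departs from the paper in how $R_k-E R_k$ is controlled. The paper groups the returns into blocks of $\ell=\lfloor N^\varepsilon T_Y\rfloor$ excursions, uses the mixing time of $Y$ (via~\eqref{e:910}) to argue the block sums $Z_j=R_{j\ell}-R_{(j-1)\ell}$ are nearly i.i.d.\ after conditioning on alternate sigma-fields, truncates $Z_j$ at $\ell N^{2+\delta}$, and applies Azuma. Your route instead decomposes the cycle sum into a Markov-chain additive functional $\sum_i\tau(Y_{i-1})$ plus a martingale residual $\sum_i(\tau_i-\tau(Y_{i-1}))$, controlling the former by the appendix bound~\eqref{e:apph} and the latter by a martingale concentration inequality. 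This is a genuinely different split, and arguably the cleaner one: it separates the drift fluctuations of the excursion chain (which the paper's appendix was designed to handle) from the independent cycle-time noise, and avoids the somewhat ad hoc blocking with parameter $\varepsilon$. It is, in fact, somewhat curious that the paper does not invoke its own appendix result here.

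A few points would need care if written out. The uniform estimate $\tau(y)\asymp N^{1+\gamma}$ for all $y\in\partial B$, which you need both as $\|h\|_\infty$ in the appendix condition~\eqref{e:apphcond} and to justify taking $\sigma^2\asymp N^{2(1+\gamma)}$, is not established in the paper (only the $\bar e_B^\Delta$-averaged identity~\eqref{e:ERk} is); it requires a Green-function or Harnack argument, but is plausible. More substantively, your conditional-variance claim $\Var_y\tau_1\lesssim N^{2(1+\gamma)}$ is too optimistic: from the exponential tail $P_y[\tau_1>t]\le Ce^{-ct/N^2}$ one only gets $E_y\tau_1^2\lesssim N^4$, hence $\Var_y\tau_1\lesssim N^4$ (heuristically $\asymp N^{3+\gamma}$, since the walk escapes far from $B$ with probability $\asymp N^{\gamma-1}$ and then $\tau_1\asymp N^2$). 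This does not break the argument—the martingale bound (Azuma after truncating at $N^{2+\delta}$, say) then yields an exponent $\asymp\eta^2N^{d-3+\gamma-2\delta}$, which is still a positive power of $N$ for $d\ge3$ since $\gamma>1/(d-1)$—but the scales you quote should be corrected. Finally, there is a boundary term: $\tau_1=R_1$ depends on the uniform initial distribution of $X$ rather than on $Y_0$ (which is undefined), so the additive functional should run over $Y_1,\dots,Y_{k-1}$ with $\tau_1$ handled separately; this is routine, since $\tau_1$ has the same exponential tail.
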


\begin{proof}
  To prove the proposition we first compute the expectation of
  $\mathcal N(t)$.
  \begin{lemma}
    For every $t\in \mathbb N$,
    \begin{equation}
      \label{e:ENt}
      |E \mathcal N(t) - t N^{-d} \Cap_\Delta (B)|\le 1.
    \end{equation}
    Moreover, when starting from $\bar{e}^\Delta_B$, the stationary
    measure for $R_i$'s, we have
    \begin{equation}
      \label{e:ERk}
      E_{\bar{e}^\Delta_B}(R_1) = \frac{N^d}{\Cap_\Delta(B)}.
    \end{equation}
  \end{lemma}

  \begin{proof}
    Recall from the proof of Lemma~\ref{l:invariantmeasure} that
    $(\bar R_i,\bar D_i)$ denote the returns and departures of the
    stationary random walk $(X_n)_{n\in \mathbb Z}$. Let
    $\bar{\mathcal N}(t)= \sup\{i:\bar R_i<t\}$. By the observation below
    \eqref{e:RDbarRD}, $|\bar{\mathcal N}(t)-\mathcal N(t)|\le 1$. It is
    thus sufficient to show that
    $E\bar{\mathcal N}(t)=tN^{-d}\Cap_\Delta (B)$. To this end recall
    equality \eqref{e:calR}. Summing it over $x_1\in \partial B$, we obtain
    \begin{equation}
      P[{k=\bar R_j}\text{ for some }j]=N^{-d}\Cap_\Delta (B),\qquad k\ge 0.
    \end{equation}
    The required claim follows by summation over $0 \le k < t$.

    The second claim of the lemma is a consequence of the first claim,
    the fact that every $X_{R_k}$ is $\bar e_B^\Delta $-distributed at
    stationarity, and the ergodic theorem.
  \end{proof}

  We proceed with proving Propositions~\ref{p:concentr_Nt}. It is more
  convenient to show a concentration result for the return times $R_i$
  instead of $\mathcal{N}(t)$. Observing that for any $t > 0$ and $b > 0$,
  \begin{equation}
    \big\{|\mathcal{N}(t) - E(\mathcal{N}(t))| > b \big\} \subseteq
    \big\{ R_{\lceil E(\mathcal{N}(t)) - b \rceil} > t \big\} \cup \big\{
    R_{\lfloor E(\mathcal{N}(t)) + b \rfloor} > t \big\}
  \end{equation}
  we obtain easily that
  \begin{equation}
    \label{e:97}
    P\big[ \big|\mathcal{N}(u N^d) - u \Cap_\Delta (B) \big| > \eta
    \Cap_\Delta (B) \big] \leq P[R_{k_-} > uN^d] + P[R_{k_+} < uN^d],
  \end{equation}
  where $k_- = \lceil (u-\eta) \Cap_\Delta (B)\rceil$ and
  $k_+ = \lfloor(u+\eta) \Cap_\Delta (B)\rfloor$.

  Let $\varepsilon > 0$ be a small constant that will be fixed later, and
  set $\ell = \lfloor N^{\varepsilon} T_Y \rfloor$, where $T_Y$ stands
  for the mixing time of the chain $Y$ estimated in \eqref{e:CPRW}. In
  order to estimate the right-hand side of \eqref{e:97}, we study the
  typical size of $R_{m_{\pm}\ell}$ where
  \begin{equation}
    \label{e:98}
    m_- = \left\lceil\ell^{-1}(u-\eta) \Cap_\Delta (B) \right\rceil
    \quad \text{and} \quad
    m_+ = \left\lfloor\ell^{-1}(u+\eta) \Cap_\Delta (B)\right\rfloor.
  \end{equation}
  From Lemma~\ref{l:baredelta} and \eqref{e:CPRW}, it follows that
  \begin{equation}
    \label{e:m_lower}
    m_\pm \geq c N^{d - 2 - \varepsilon}.
  \end{equation}

  Let $\mathcal G_i=\sigma (X_i:i\le R_{i\ell})$. Using the standard
  properties of the mixing time (see e.g.~\cite[Section~4.5]{LPW09}) and
  the strong Markov property, it is easy to see that
  \begin{equation}
    \|P[(X_{R_{i\ell}},X_{D_{i\ell}})\in \,\cdot\,|\mathcal G_{i-1}] - \pi
    (\cdot)\|_{TV} \le 2^{-N^{\varepsilon }}.
  \end{equation}
  By Lemma~\ref{l:baredelta},
  $\pi (\{y\}\times \partial \Delta ) = \bar e_B^\Delta (y)\asymp N^{1-d}$
  uniformly in $y\in\partial B$, and thus
  \begin{equation}
    \label{e:910}
    \bigg|\frac{P[X_{R_{i\ell}}=y|\mathcal G_{i-1}]}{\bar e_B^\Delta
      (y)}-1\bigg|
    \le c 2^{-N^{\varepsilon /2}}, \qquad i\ge 1.
  \end{equation}
  For $m$ standing for $m_+$ or $m_-$, we write
  \begin{equation}
    R_{m\ell} = \sum_{j=1}^{m} Z_j, \text{ where }
    Z_j = R_{j\ell}-R_{(j-1)\ell} \text{ and } R_0:=0.
  \end{equation}
  For every $j\ge 2$, by \eqref{e:910},
  \begin{equation}
    P[Z_j > t|\mathcal G_{j-2}] \leq (1+c 2^{-N^{\varepsilon /2}}) P_{\bar e_B^\Delta }[R_\ell > t]
    \leq 2 \ell P_{\bar e_B^\Delta }[R_1 > t/\ell].
  \end{equation}
  By the invariance principle $P[R_1>N^2]\le c <1$. Using this and
  Markov property iteratively yields
  $P[R_1>N^{2+\delta }]\le e^{-c N^\delta }$ for any $\delta >0$, and thus
  \begin{equation}
    \label{e:915}
    P[Z_j > \ell N^{2+\delta}|\mathcal G_{j-2}]
    \leq  2\ell P_{\bar e_B^\Delta }[R_1 > N^{2+\delta}]
    \leq c \exp\{-N^{c'  \delta}\}.
  \end{equation}
  Analogous reasoning proves also that
  \begin{equation}
    P[Z_1\ge \ell N^{2+\delta }]
    \leq c \exp\{-N^{c'  \delta}\}.
  \end{equation}
  Observe also that for $j\ge 2$, by \eqref{e:910} again,
  \begin{equation}
    |E[Z_j]-E[Z_j|\mathcal G_{j-1}]|\le c 2^{-N^{\varepsilon /2}}E(Z_j).
  \end{equation}
  Hence,
  \begin{equation}
    \begin{split}
      \label{e:917}
     & P[|R_{m\ell} - E(R_{m\ell})| > \eta E(R_{m\ell})]
      =P\Big[ \Big|\sum_{j=1}^{m}\big(Z_j - E[Z_j]\big)\Big| > \eta E(R_{m\ell}) \Big]
      \\ &
      \le P[Z_1\ge \eta  E(R_{m\ell})/4] +
      \sum_{n \in \{0,1\}} P\Big[ \Big|
        \sum_{\substack{1 \leq j \le m\\ j \text{ mod } 2 = n}}
        \big(Z_{j} - E[Z_{j}|\mathcal G_{j-2}]\big)\Big| >
        \eta E(R_{m\ell})/4 \Big].
    \end{split}
  \end{equation}
  Setting $\hat Z_j = Z_j \wedge \ell N^{2 + \delta}$, which by
  \eqref{e:915} satisfies
  \begin{equation}
    |E[\hat Z_j|\mathcal G_{j-2}] - E[Z_j|\mathcal G_{j-2}]|
    = \int_{\ell N^{2 + \delta}}^\infty P[Z_j > t|\mathcal G_{j-2}] dt
  \leq c \exp\{- N^{c'\delta }\},
  \end{equation}
  the right-hand side of \eqref{e:917} can be bounded by
  \begin{equation}
    \leq c m \exp\{-N^{c'  \delta}\} + \sum_{n\in{0,1}}P\Big[
    \Big|
    \sum_{\substack{1 \leq j \le m\\ j = n \text{ mod } 2}}
    \big(\hat{Z}_{j}
    - E[\hat Z_{j}|\mathcal G_{j-2}]\big)\Big| > \eta
    E(R_{m\ell})/4 \Big].
  \end{equation}
  Azuma's inequality together with $E[R_{m\ell}]\asymp N^d$,
  \eqref{e:98}, \eqref{e:m_lower}, and Lemma~\ref{l:baredelta} then yield
  \begin{equation}
    \begin{split}
      & \leq c m \exp\{-N^{c' \delta}\} +
      4 \exp\Big\{ - \frac{2 c(\eta E(R_{m\ell}))^2}{m (\ell N^{2 + \delta})^2}
        \Big\}\\
      & \leq c m \exp\{-N^{c'\delta}\} + 4 \exp\Big\{ - c\eta^2 \frac{m N^{2d - 4 -
            2\delta}}{\Cap_\Delta(B)^2} \Big\}\\
      & \leq c m \exp\{-N^{c' \delta}\}
      + 4 \exp\Big\{ - c \eta^2 N^{d-4 + 2\gamma - \varepsilon - 2\delta}
        \Big\}.
    \end{split}
  \end{equation}
  For every $d\ge 3$ and $\gamma $ as in \eqref{e:gammachi}, it is
  possible to fix $\delta $ and $\varepsilon $ sufficiently small so that
  the exponent of $N$ on the right-hand side of the last display is
  positive. Therefore the above decays at least as
  $C \exp\{-c \eta^2 N^c\}$ as $N$ tends to infinity, finishing the proof
  of the proposition.
\end{proof}


We now count the number of excursions of random interlacements at level
$u$ into $B$. Let $J_u^N$ be the Poisson process with intensity
$\Cap (B_N)$ driving the excursions of random interlacements to $B_N$,
cf.~\eqref{e:ridef}. From Section~\ref{s:toruscoupling}, recall the
definition \eqref{e:Ti} of random variables $T^{(i)}$ giving the number
of excursions of $i$-th random walk between $B$ and $\Delta $. Given
those, denote by $\mathcal N'(u)$ the number of steps of Markov chain $Z$
corresponding to the level $u$ of random interlacements,
\begin{equation}
  \mathcal N'(u) = \sum_{i=1}^{J^N_u} T^{(i)}
\end{equation}

\begin{proposition}
  \label{p:nri}
  There exist constants $c$, $C$ depending only on $\gamma $ and $u$ such
  that for every $u>0$
  \begin{equation}
    P\big[|\mathcal N'(u) - u \Cap_\Delta (B)|\ge \eta u \Cap_\Delta (B)\big]\le
    C\exp\{-c \eta^2 N^c\}.
  \end{equation}
\end{proposition}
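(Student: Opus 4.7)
Since $(T^{(i)})_{i\ge 1}$ is i.i.d.\ and independent of $J^N_u \sim \mathrm{Poisson}(u\Cap(B))$, the random variable $\mathcal N'(u)$ is a compound Poisson sum, and my strategy is to identify its mean and then concentrate the two sources of randomness separately. Summing the identity obtained between \eqref{e:Tia} and \eqref{e:Tib} in the proof of Lemma~\ref{l:invariantmeasure} over $x_1 \in \partial B$ gives
\[
\EZ_{\bar e_B}[T^{(1)}] \;=\; \sum_{x_1\in \partial B} \frac{P_{x_1}[\tilde H_B > H_\Delta ]}{\Cap(B)} \;=\; \frac{\Cap_\Delta(B)}{\Cap(B)},
\]
so that $E[\mathcal N'(u)] = u \Cap(B) \cdot \nu = u\Cap_\Delta(B)$, where $\nu := \EZ_{\bar e_B}[T^{(1)}] \asymp N^{1-\gamma}$. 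The second key input is a geometric-type tail bound $P[T^{(1)} > k] \le (1 - cN^{\gamma-1})^k$. Indeed, given $R^{(1)}_k < \infty$, by transience the walk $X^{(1)}$ almost surely reaches $\Delta$; the strong Markov property together with \eqref{e:ebarii} then shows that from any point on $\partial \Delta$ the walk fails to return to $B$ with probability at least $cN^{\gamma-1}$, uniformly in the starting point, and iterating yields the claim.

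Armed with these two ingredients, I would decompose
\[
\mathcal N'(u) - u\Cap_\Delta(B) \;=\; \sum_{i=1}^{J^N_u}\big(T^{(i)} - \nu\big) \;+\; \big(J^N_u - u\Cap(B)\big)\nu,
\]
so the target event is contained in the union of the events that each of the two summands has absolute value at least $\eta u\Cap_\Delta(B)/2$. The second summand translates to $\{|J^N_u - u\Cap(B)| \ge \eta u\Cap(B)/2\}$; standard Poisson concentration bounds this by $C\exp(-c\eta^2 u\Cap(B)) \le C\exp(-c\eta^2 N^{d-2})$.

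For the first summand, I would further condition on $\{J^N_u \le 2u\Cap(B)\}$ (the complement is already covered by the Poisson bound above) and truncate each $T^{(i)}$ at level $M := A\nu\log N$ for a large absolute constant $A$. The tail bound of the first paragraph yields $P[\max_{i \le 2u\Cap(B)} T^{(i)} > M] \le 2u\Cap(B)\cdot N^{-cA}$, negligible for $A$ large, and the truncation bias $|E[T^{(i)}\wedge M] - \nu|$ is similarly negligible. On the complementary event, Hoeffding's inequality applied to the bounded i.i.d.\ truncated sum gives a deviation bound of order $\exp(-c\eta^2 u\Cap(B)\nu^2/M^2) = \exp(-c\eta^2 N^{d-2}/\log^2 N)$, which is of the required form $C\exp(-c\eta^2 N^c)$ since $d \ge 3$. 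The main technical point is the exponential tail for $T^{(1)}$: its proof is short but relies crucially on the $N^\gamma$ security layer around $B$ encoded by \eqref{e:ebarii}. Once this is in hand, the concentration step is routine and, thanks to the genuine independence of the $T^{(i)}$, considerably simpler than the martingale argument used for Proposition~\ref{p:concentr_Nt}.
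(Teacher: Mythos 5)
Your proposal is correct and follows essentially the same strategy as the paper's proof: Poisson concentration for $J^N_u$, geometric domination of $T^{(i)}$ (with parameter of order $N^{\gamma-1}$ coming from \eqref{e:ebarii}), the identity $\EZ_{\bar e_B}[T^{(1)}] = \Cap_\Delta(B)/\Cap(B)$ obtained by summing \eqref{e:Tia}--\eqref{e:Tib} over $\partial B$, and a Chernov-type concentration bound for the i.i.d.\ sum of $T^{(i)}$'s. The one place where you should be slightly more careful is the random index in $\sum_{i\le J^N_u}(T^{(i)}-\nu)$: Hoeffding applies to a deterministic number of summands, so on $\{J^N_u\le 2u\Cap(B)\}$ you still need a union bound or martingale maximal inequality over $m\le 2u\Cap(B)$, which costs a polynomial factor but is harmless. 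The paper sidesteps this by applying the Chernov bound directly to the deterministic partial sums $\sum_{i\le v}T^{(i)}$ with $v=(1\pm\tfrac{\eta}{2})u\Cap(B)$ and sandwiching $\mathcal N'(u)$ between them on the Poisson concentration event.
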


\begin{proof}
  By definition of random interlacements, $J^N_u$ is a Poisson random
  variable with parameter $u \Cap (B) \asymp u N^{d-2}$, and thus, by
  Chernov estimate,
  \begin{equation}
    \label{e:cria}
    P\big[|J^N_u - u \Cap (B)|\ge \eta u \Cap (B)\big]\le
    C\exp\{-c \eta^2 N^{d-2}\}.
  \end{equation}
  The random variables $T^{(i)}$ are i.i.d.~and stochastically dominated
  by the geometric distribution with parameter
  $\inf_{y\in \partial \Delta }\PZ_y[H_B=\infty] \asymp N^{\gamma -1}$,
  by Lemma~\ref{l:ebar}. Moreover, by summing
  \eqref{e:Tia}--\eqref{e:Tib} over $x_1\in\partial B$ we obtain
  \begin{equation}
    \EZ_{\bar e_B} T^{(i)} = \frac{\Cap_\Delta (B)}{\Cap (B)}.
  \end{equation}
  Applying Chernov bound again for $v=(1\pm\frac \eta 2)u\Cap (B)$,
  \begin{equation}
    \label{e:crib}
    P\Big[\Big|\sum_{i=1}^v T^{(i)} - \frac{v \Cap_\Delta (B)}{\Cap (B)}\Big|
    \ge \frac \eta 2 \, \frac{v \Cap_\Delta (B)}{\Cap (B)}
    \Big]\le C
    \exp\{-c \eta^2 N^c\}
  \end{equation}
  for some constants $C$ and $c$ depending on $\gamma $ and $u$.
  The proof is completed by combining \eqref{e:cria} and \eqref{e:crib}.
\end{proof}

\section{Proofs of the main results}
\label{s:proofs}
We can now finally show our main results: Theorem~\ref{t:toruscoupling}
giving the coupling between the vacant sets of the random walk and the
random interlacements in macroscopic subsets of the torus, and
Theorem~\ref{t:phasetransition} implying the phase transition in the
behavior of the radius of the connected cluster of the vacant set of the
random walk containing the origin.

\begin{proof}[Proof of Theorem~\ref{t:toruscoupling}]
  As already announced several times, Theorem~\ref{t:couplingmc} is the
  key ingredient of this proof.

  Recall the definitions and transition probabilities of the Markov
  chains $Y=(Y_i)_{i\ge 1}$ and $Z=(Z_i)_{i\ge 1}$ from
  Section~\ref{s:coupling}. The state space $\Sigma $ of these Markov
  chains is finite, so we can apply Theorem~\ref{t:couplingmc} to
  construct a coupling of those two
  chains on some probability space
  $(\Omega_N , \mathcal F_N, \mathbb Q_N)$ carrying a Poisson point process with intensity $\mu \otimes dx$ on
  $\Sigma \times [0,\infty)$, so that their ranges coincide
  in sense of \eqref{e:goodmc}.
  We will apply this theorem with
  \begin{equation}
    \begin{aligned}
      n&= u \Cap_\Delta (B) \asymp N^{d-1-\gamma },
      &&\text{(cf.~Lemma~\ref{l:baredelta},
        Propositions~\ref{p:concentr_Nt}, \ref{p:nri})}
      \\|\Sigma|& \asymp N^{2(d-1)},
      \\ T_Y,T_Z&\le c N^{1-\gamma },
      &&\text{(Lemma~\ref{l:mixingtimes})}
      \\g(\boldsymbol z)&=\bar e_B^\Delta (z_1)\asymp N^{1-d},\qquad
      &&\text{(Lemma~\ref{l:baredelta})}
      \\\Var \rho_{\vec z}^Y,\Var \rho_{\vec z}^Z&\asymp N^{1-d}N^{-\gamma (d-1)},
      &&\text{(Lemma~\ref{l:varrho})}
      \\\|\rho^Y_{\vec z} \|_\infty, \|\rho^Z_{\vec z}\|_\infty&\asymp N^{-\gamma (d-1)},
      &&\text{(Lemma~\ref{l:hittingprobas}, cf.~\eqref{e:maxrho} and below)}
    \end{aligned}
  \end{equation}
  In addition, it follows from Claims~\ref{c:YregenA},~\ref{c:YregenB},
  that $\pi^\star$ decays polynomially with $N$, and thus
  \begin{equation}
    k(\varepsilon_N ) \sim c \log N - c'\log \varepsilon_N.
  \end{equation}
  Substituting those into condition \eqref{e:epsasmc} of
  Theorem~\ref{t:couplingmc} implies that
  $\varepsilon_N < c_0=c_0(d,\gamma ,\chi)$ as assumed in
  Theorem~\ref{t:toruscoupling}. If, in addition, $\varepsilon_N$
  satisfies $\varepsilon_N^2 \ge c N^{\delta-\kappa }$ for
  $\kappa = \gamma (d-1)-1>0$ and $\delta >0$, the, after some algebra,
  we obtain
  \begin{equation}
    \label{e:couplingZY}
    \mathbb Q_N\Big[\bigcup_{i\le (1-\varepsilon_N)n} Z_i
      \subset \bigcup_{i\le n} Y_i \subset
      \bigcup_{i\le (1+\varepsilon_N)n} Z_i\Big]\ge 1- c_1 e^{-c_2 N^{\delta '}}
  \end{equation}
  for some $\delta' $ as in Theorem~\ref{t:toruscoupling}.

  We now re-decorate $Y$ and $Z$ to obtain a coupling of the vacant sets
  restricted to $B$. Let $\Gamma $ be the space of all finite-length
  nearest-neighbor paths on $\mathbb T_N^d$. For $\gamma \in \Gamma $ we
  use $\ell(\gamma )$ to denote its length and write $\gamma $ as
  $(\gamma_0,\dots,\gamma_{\ell(\gamma )})$.

  To construct the vacant set of the random walk, we define on the same
  probability space $(\Omega_N , \mathcal F_N, \mathbb Q_N)$ (by possibly
    enlarging it) two sequences of `excursions' $(\mathcal E_i)_{i\ge 1}$
  and $(\tilde{\mathcal E}_i)_{i\ge 0}$, whose distribution is uniquely
  determined by the following properties
  \begin{itemize}
    \item Given realization of $Y=((Y_{i,1},Y_{i,2}))_{i\ge 1}$ and
    $Z=((Z_{i,1},Z_{i,2}))_{i\ge 1}$, $(\mathcal E_i)$ and
    $(\tilde{\mathcal E}_i)$ are conditionally independent sequences of
    conditionally independent random variables.

    \item For every $i\ge 1$, the random variable $\mathcal E_i$ is
    $\Gamma $-valued and for every $\gamma \in \Gamma$,
    \begin{equation}
      \mathbb Q_N[\mathcal E_i=\gamma|Y,Z]=
      P_{Y_{i,1}}[H_\Delta=\ell(\gamma ),X_i=\gamma_i\forall i\le \ell(\gamma )
      |X_{H_\Delta }=Y_{i,2}].
    \end{equation}

    \item For every $i\ge 1$, the random variable $\tilde{\mathcal E}_i$
    is $\Gamma $-valued and for every $\gamma \in \Gamma$,
    \begin{equation}
      \mathbb Q_N[\tilde{\mathcal E}_i=\gamma|Y,Z]=
      P_{Y_{i,2}}[H_B=\ell(\gamma ),X_i=\gamma_i\forall i\le \ell(\gamma )
      |X_{H_B}=Y_{i+1,1}].
    \end{equation}

    \item The random variable $\tilde{\mathcal E}_0$ is $\Gamma $-valued and
    \begin{equation}
      \mathbb Q_N[\tilde{\mathcal E}_0=\gamma|Y,Z]=
      P[R_1=\ell(\gamma ), X_i=\gamma_i\forall i\le \ell(\gamma )|X_{R_1}=Y_{1,1}].
    \end{equation}
  \end{itemize}
  With slight abuse of notation, we construct on
  $(\Omega_N , \mathcal F_N, \mathbb Q_N)$ a process $(X_n)_{n\ge 0}$
  defined by concatenation of
  $\tilde{\mathcal E_0}, \mathcal E_1, \tilde{\mathcal E}_1, \mathcal E_2, \dots$.
  From the construction it follows easily that $X$ is a simple random
  walk on $\mathbb T_N^d$ started from the uniform distribution. Finally,
  we write $R_1=\ell(\tilde{\mathcal E}_0)$,
  $D_1=\ell(\tilde{\mathcal E}_0)+\ell(\mathcal E_1)$, \dots, which is
  consistent with the previous notation, and set, as before,
  $\mathcal N(uN^d)=\sup\{i:R_i<uN^d\}$. Finally, we fix an arbitrary
  constant $\beta >0$ and define the vacant set of random walk on
  $(\Omega_N, \mathcal F_N, \mathbb Q_N)$ by
  \begin{equation}
    \label{e:cVNu}
    \mathcal V^u_N = \mathbb T_N^d \setminus \{X_{\beta
      N^d},\dots,X_{(\beta+u)N^d }\},
  \end{equation}
  which has the same distribution as the vacant set introduced in
  \eqref{e:VNu}, since $(X_i)$ is stationary Markov chain.

  To construct the vacant set of random interlacements intersected with
  $B$, let $\mathcal I_0=\emptyset$ and for $i\ge 1$ inductively
  \begin{equation}
    \begin{split}
      \label{e:matchingEs}
      \iota_i &= \inf\{j\ge 1: j\notin \mathcal I_{i-1}, Y_j=Z_i\},
      \\\mathcal E_i^{\RI}&= \mathcal E_{\iota_i},
      \\\mathcal I_i &= \mathcal I_{i-1}\cup \{\iota_i\}.
    \end{split}
  \end{equation}
  Let further $(U_i)_{i\ge 1}$ be a sequence of conditionally independent
  Bernoulli random variables with (cf.~\eqref{e:distZ})
  \begin{equation}
    P[U_i=1]=\frac{\PZ_{Z_{i,2}}[H_B=\infty]\bar e_B(Z_{i+1,1})}
    {\PZ_{Z_{i,2}}[H_B<\infty, X_{H_B}=Z_{i+1,1}] +
      \PZ_{Z_{i,2}}[H_B=\infty]\bar e_B(Z_{i+1,1})}.
  \end{equation}
  The event $\{U_i=1\}$ heuristically correspond to the event ``after the
  excursion $Z_i$ the random walk leaves to infinity and the excursion of
  random interlacements corresponding to $Z_{i+1}$ is a part of another
  random walk trajectory''. We set $V_0=0$ and inductively for $i\ge 1$.
  $V_i=\inf\{i>V_{i-1}:U_i=1\}$. Then, by construction, for every $i\ge 1$,
  $(\mathcal E_j^{\RI})_{V_{i-1}<j\le V_i}$ has the same distribution as
  the sequence of excursions of random walk $X^{(i)}$ into $B$,
  cf.~\eqref{e:ridef}, \eqref{e:riexc}. Finally, as in \eqref{e:ridef},
  we let $(J^N_u)_{u\ge 0}$ to stand for a Poisson process with intensity
  $\Cap (B)$, defined on $(\Omega_N, \mathcal F_N, \mathbb Q_N)$,
  independent of all previous randomness, and set
  \begin{equation}
    \mathcal N'(u)=V_{J_u^N}.
  \end{equation}
  This is again consistent with previous notation. Finally, for $\beta $
  as above, we can construct the random variables having the law of the
  vacant set of random interlacements at levels $u+\varepsilon_N$ and
  $u-\varepsilon_N$ intersected with $B$,
  \begin{equation}
    \label{e:cVu}
    \mathcal V^{u\pm\varepsilon_N} = B\setminus
    \bigcup_{i=\mathcal N'(\beta\mp\varepsilon N/2) }
    ^{\mathcal N'(\beta + u \pm \varepsilon_N/2)}
    \text{Range}(\mathcal E^{\RI}_i).
  \end{equation}

  Denoting $\mathcal K_N=\Cap_\Delta (B)$, by
  Proposition~\ref{p:concentr_Nt} the set $\mathcal V_N^u$ of
  \eqref{e:cVNu} satisfies
  \begin{equation}
    \label{e:erra}
    \mathbb Q_N\bigg[
      B_N \setminus
      \bigcup_{i=(\beta -\varepsilon_N/4)\mathcal K_N}
      ^{(\beta +u+\varepsilon_N/4)\mathcal K_N}
      \text{Range}(\mathcal E_i)
      \subset \mathcal V_N^u \subset
      B_N\setminus
      \bigcup_{i=(\beta +\varepsilon_N/4)\mathcal K_N}
      ^{(\beta +u-\varepsilon_N/4)\mathcal K_N}
      \text{Range}(\mathcal E_i)\bigg]
    \ge 1-Ce^{-c\varepsilon_N^2 N^{c}}.
  \end{equation}
  Combining \eqref{e:couplingZY} and \eqref{e:matchingEs} yields
  \begin{equation}
    \begin{split}
      &\mathbb Q_N\bigg[
        B_N \setminus
        \bigcup_{i=(\beta -\varepsilon_N/4)\mathcal K_N}
        ^{(\beta +u+\varepsilon_N/4)\mathcal K_N}
        \text{Range}(\mathcal E_i)
        \supset
        B_N \setminus
        \bigcup_{i=(\beta -\varepsilon_N/3)\mathcal K_N}
        ^{(\beta +u+\varepsilon_N/3)\mathcal K_N}
        \text{Range}(\mathcal E^{RI}_i)
        \bigg]
      \ge 1-Ce^{-c_2N^{\delta '}},
      \\&\mathbb Q_N\bigg[
        B_N \setminus
        \bigcup_{i=(\beta +\varepsilon_N/4)\mathcal K_N}
        ^{(\beta +u-\varepsilon_N/4)\mathcal K_N}
        \text{Range}(\mathcal E_i)
        \subset
        B_N \setminus
        \bigcup_{i=(\beta +\varepsilon_N/3)\mathcal K_N}
        ^{(\beta +u.\varepsilon_N/3)\mathcal K_N}
        \text{Range}(\mathcal E^{RI}_i)
        \bigg]
      \ge 1-Ce^{-c_2N^{\delta '}}.
    \end{split}
  \end{equation}
  Finally, by Proposition~\ref{p:nri}, for vacant sets as in \eqref{e:cVu},
  \begin{equation}
    \begin{split}
      \label{e:errc}
      &\mathbb Q_N\bigg[\mathcal V^{u+\varepsilon_N/2}\cap B \subset
        B_N \setminus
        \bigcup_{i=(\beta -\varepsilon_N/3)\mathcal K_N}
        ^{(\beta +u+\varepsilon_N/3)\mathcal K_N}
        \text{Range}(\mathcal E^{RI}_i)\bigg]
      \ge 1-Ce^{- c\varepsilon_N^2 N^c},
      \\&\mathbb Q_N\bigg[\mathcal V^{u-\varepsilon_N/2}\cap B \supset
        B_N \setminus
        \bigcup_{i=(\beta +\varepsilon_N/3)\mathcal K_N}
        ^{(\beta +u-\varepsilon_N/3)\mathcal K_N}
        \text{Range}(\mathcal E^{RI}_i)\bigg]
      \ge 1-Ce^{-c \varepsilon_N^2 N^c }.
    \end{split}
  \end{equation}
  Theorem~\ref{t:toruscoupling} then follows by combining
  \eqref{e:erra}--\eqref{e:errc}.
\end{proof}

\begin{proof}[Proof of Theorem~\ref{t:phasetransition}]
  Let us first introduce a simple notation. If $\mathcal{C}$ is a random
  subset of either $\mathbb{T}^d$ or $\mathbb{Z}^d$, let
  $A_N(\mathcal{C})$ stand for the event $[\diam(\mathcal{C}) > N/4]$,
  which appears in the definition of $\eta_N(u)$. We also denote by
  $\mathcal{C}_0(u)$ the connected component containing the origin of
  $\mathbb{Z}^d$ for random interlacements at level $u$.

  We now turn to the proof of \eqref{e:phasetransitionsubcritical}. Fix
  $u > u_\star(d)$.  Letting $u' \in (u_\star, u)$ and writing
  $u'=(1-\varepsilon )u$, we estimate
  \begin{equation}
    P[A_N(\mathcal{C}_N(u))] \leq 1 - \mathbb{Q}_N\Big[(\mathcal V_N^u \cap \mathcal B_N) \subset (\mathcal V^{u(1-\varepsilon )} \cap \mathcal B_N)\Big] + P\big[A_N(\mathcal{C}_0(u'))\big],
  \end{equation}
  which clearly tends to zero using Theorem~\ref{t:toruscouplingweak} and
  the fact that $u' > u_\star$.

  Now let us treat the supercritical case in
  \eqref{e:phasetransitionsupercritical}. Given $u < u_\star$ and
  $\varepsilon > 0$, we use the continuity of $\eta(u)$ in $[0,u_\star)$,
  see Corollary~1.2 of \cite{Tei09b}, to find $u'$ and $u''$ such that
  \begin{equation}
    (1-\varepsilon ) u \le u' < u < u'' \le (1+\varepsilon )u \quad \text{and} \quad \eta(u') - \eta(u'') < \varepsilon.
  \end{equation}
  We now observe that for $N > c$ we have
  $|\eta(u') - P[A_N(\mathcal C_0(u'))] | < \varepsilon$.
  Therefore, since $\eta $ is non-increasing function,
  \begin{equation}
    \begin{split}
      \big|P&[A_N(\mathcal{C}_N(u))]  - \eta(u)\big|
      \\&\leq \varepsilon +
      \big(P[A_N(\mathcal{C}_N(u))] - \eta(u'')\big)_-
      + \big(P[A_N(\mathcal{C}_N(u))] - \eta(u')\big)_+\\
      & \overset{{N > c}}\leq 2\varepsilon
      + \big( \mathbb{Q}[A_N(\mathcal C_N(u))] - \mathbb{Q} [A_N(\mathcal{C}_0(u''))]\big)_-
      + \big( \mathbb{Q}[A_N(\mathcal C_N(u))] - \mathbb{Q} [A_N(\mathcal{C}_0(u'))]\big)_+\\
      & \leq 2\varepsilon + 1 - \mathbb Q_N\big[(\mathcal V^{u(1+\varepsilon )}\cap \mathcal B_N) \subset (\mathcal V_N^u \cap \mathcal B_N) \subset (\mathcal V^{u(1-\varepsilon )}\cap \mathcal B_N)\big].
    \end{split}
  \end{equation}
  Since the limsup of the right-hand side of the above equation is at most
  $2 \varepsilon$ by Theorem~\ref{t:toruscouplingweak} and $\varepsilon > 0$ is
  arbitrary, we have proved \eqref{e:phasetransitionsupercritical} and
  consequently Theorem~\ref{t:phasetransition}.
\end{proof}

\appendix
\section{A Chernov-type estimate for additive functionals of Markov chains}

We show here a simple variant of Chernov bound for additive functionals
of Markov chains. Many such bounds were obtained previously, but they do
not suite our purposes. E.g.,~Lezaud \cite{Lez98} (see also
  Theorems~2.1.8, 2.1.9 in \cite{Sal97}) provides such bounds in terms of
the spectral gap of the Markov chain. Since the spectral gap of
non-reversible Markov chains is not easy to estimate, and, more
importantly, it does not always reflect the mixing properties of the
chain, it seems preferable to use the mixing time of the chain as the
input. This idea was applied e.g.~in \cite{CLLM12}, whose bounds, in
contrary to \cite{Lez98}, do not use the information about the variance
of the additive functional under the equilibrium measure, and thus give
worse estimates in the case where this variance is known. The theorems
below can be viewed as combination of those two results.

We consider discrete time Markov chains first.
\begin{theorem}
  \label{t:appdisc}
  Let $(X_n)_{n\ge 0}$ be a discrete-time Markov chain on a finite state
  space $\Sigma $ with transition matrix $P$, initial distribution $\nu $,
  mixing time $T$, and invariant distribution~$\pi $. Then, for every
  $n\ge 1$, every function $f:\Sigma \to [-1,1]$ satisfying $\pi (f)=0$
  and $\pi (f^2)\le \sigma ^2$, and every
  $\gamma \le \sigma ^2\wedge \frac 12$
  \begin{equation}
    \label{e:chernovdiscrete}
    \mathbb P \Big[
    \sum_{i<n}  f(X_n)  \ge
    n\gamma \Big]
    \le 4
    \exp\Big\{-  \Big\lfloor\frac n {k(\gamma )T}-1\Big\rfloor
    \frac{\gamma^2}{ 6\sigma^2 } \Big\},
  \end{equation}
  with
  \begin{equation}
    k(\gamma )= -\log_2 ( \pi_\star \gamma^2/(6\sigma^2))
  \end{equation}
  and $\pi_\star = \min_{x\in \Sigma } \pi (x)$.
\end{theorem}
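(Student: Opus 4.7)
The plan is to combine two ingredients: a mixing-time based upgrade of the $\ell$-step kernel from total-variation to pointwise approximation of $\pi$, where $\ell := k(\gamma)T$, and a Bernstein-type moment-generating-function estimate for near-independent samples. Throughout set $\varepsilon := \gamma^2/(6\sigma^2)$.

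First I would iterate the mixing bound. By submultiplicativity of the TV distance to stationarity (see e.g.\ \cite[Sec.~4.5]{LPW09}), $\|P^\ell(x,\cdot) - \pi\|_{TV} \le 2^{-k(\gamma)}$, and by the very definition of $k(\gamma)$ this equals $\varepsilon\pi_\star$. Since the pointwise difference $|P^\ell(x,y) - \pi(y)|$ is bounded by the TV distance, we obtain $P^\ell(x,y) \le (1+\varepsilon)\pi(y)$ uniformly in $x,y$. This is the key step: the $\ell$-step kernel can be replaced by $\pi$ at the cost of only a multiplicative factor $1+\varepsilon$.

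Next I would decompose $S_n := \sum_{i<n}f(X_i)$ according to residues modulo $\ell$: write $S_n = \sum_{r=0}^{\ell-1} \tilde S^{(r)}$ with $\tilde S^{(r)} := \sum_{j\ge 0,\, r+j\ell < n} f(X_{r+j\ell})$. For each fixed $r$, the sequence $(X_{r+j\ell})_{j\ge 0}$ is a Markov chain driven by $P^\ell$; iterated conditioning together with the pointwise mixing bound gives, for every $|\lambda|\le 1$,
\[
\mathbb E\bigl[e^{\lambda \tilde S^{(r)}}\bigr] \le \bigl[(1+\varepsilon)\pi(e^{\lambda f})\bigr]^{c_r},
\]
where $c_r := |\{i<n:i\equiv r \pmod\ell\}| \ge m := \lfloor n/\ell - 1\rfloor$. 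The hypotheses $\pi(f)=0$, $\pi(f^2)\le\sigma^2$ and $|f|\le 1$, combined with the elementary bound $e^x\le 1+x+x^2$ for $|x|\le 1$, yield $\pi(e^{\lambda f})\le 1+\lambda^2\sigma^2$, so that
\[
\mathbb E\bigl[e^{\lambda\tilde S^{(r)}}\bigr] \le \exp\bigl\{c_r(\varepsilon + \lambda^2\sigma^2)\bigr\}.
\]

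Finally I would apply the exponential Markov inequality to each $\tilde S^{(r)}$ with $\lambda = \gamma/(2\sigma^2)$ (permissible because $\gamma\le\sigma^2$) to obtain a Bernstein-type tail $\mathbb P[\tilde S^{(r)}\ge c_r\gamma]\le \exp\{-c_r \gamma^2/(c\sigma^2)\}$ for an absolute constant $c$, and then take a union bound over the $\ell$ residue classes: if $S_n\ge n\gamma$, pigeonhole forces some $\tilde S^{(r)}\ge c_r\gamma$. The main obstacle I expect is absorbing the combinatorial prefactor $\ell = k(\gamma)T$ from the union bound into the numerical constant $4$ of the statement; this requires a careful pairing of $\lambda$ with $\varepsilon$ so that the Bernstein exponent $\gamma^2/(6\sigma^2)$ leaves enough slack to dominate $\log\ell$, exploiting the assumption $\gamma\le 1/2$. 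The slightly unusual appearance of $\pi_\star$ inside the logarithm defining $k(\gamma)$ is motivated precisely by the need to upgrade the TV mixing bound to a pointwise bound without leaking inverse powers of $\pi_\star$ into the prefactor.
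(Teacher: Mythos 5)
Your first two moves (upgrading TV-mixing to a pointwise bound $P^{\ell}(x,y)\le (1+\varepsilon)\pi(y)$ via the choice $\ell=k(\gamma)T$ and $\varepsilon=\gamma^2/(6\sigma^2)$, then decomposing $S_n=\sum_{i<n}f(X_i)$ along residue classes mod~$\ell$ into subsampled $P^{\ell}$-chains) coincide exactly with the paper's. The gap is in the recombination step. You propose to conclude by pigeonhole and a union bound over the $\ell$ residue classes; but this produces a prefactor $\ell = k(\gamma)T$, which is \emph{not} bounded by any absolute constant (it depends on the mixing time $T$, on $\sigma^2$, on $\gamma$ and on $\pi_\star$). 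Moreover there is no slack to absorb $\log\ell$: the regime where the theorem is nontrivial starts at $n\approx 2\ell$, where $m=\lfloor n/\ell-1\rfloor$ is of order~$1$ and the exponential factor is itself order~$1$, so $\ell e^{-cm\gamma^2/\sigma^2}$ can exceed the claimed $4e^{-m\gamma^2/(6\sigma^2)}$ by an arbitrarily large factor. And in the opposite regime of large $n$ you are already losing a constant in the exponent (see below), so the decay rate is strictly worse and the prefactor cannot be traded against it.

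The paper avoids this entirely by using convexity of the exponential instead of a union bound. Writing $S_n=\sum_{k<\tau}Y_n^{(k)}$ with $\tau=\ell$, Jensen's inequality gives $e^{\lambda S_n/\tau}\le \tau^{-1}\sum_{k<\tau}e^{\lambda Y_n^{(k)}}$, so after taking expectations and applying the exponential Chebyshev bound one obtains an \emph{average} of the MGFs of the residue-class sums, with no factor of~$\tau$. This is the idea your plan is missing and it is precisely what makes the constant~$4$ attainable. Secondarily: your elementary estimate $\pi(e^{\lambda f})\le 1+\lambda^2\sigma^2$, optimized at $\lambda=\gamma/(2\sigma^2)$, yields a raw exponent $\gamma^2/(4\sigma^2)$; after subtracting $\varepsilon=\gamma^2/(6\sigma^2)$ you would only get $\gamma^2/(12\sigma^2)$ in the final bound, whereas the stated theorem has $\gamma^2/(6\sigma^2)$ and the same $6$ appears inside $k(\gamma)$. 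The paper uses Bennett's lemma (MGF of the extremal two-point distribution) plus the entropy lower bound $H(\cdot|\cdot)\ge\gamma^2/(3\sigma^2)$, which is sharper and produces the stated constants. Also note that for $i<\ell$ the law of $X_i$ is not controlled by $(1+\varepsilon)\pi$, so the first summand in each residue class must be bounded trivially by $e^{\lambda}$ (using $|f|\le1$), which is where the paper's prefactor and the floor $\lfloor n/\tau-1\rfloor$ come from; your formula $[(1+\varepsilon)\pi(e^{\lambda f})]^{c_r}$ silently applies the stationary bound to that first term as well.
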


\begin{proof}
  Let $\tau = k(\gamma )T$. From \cite[Section 4.5]{LPW09} it follows
  that, for any initial distribution $\nu $,
  \begin{equation}
    \label{e:taustep}
    (1-\varepsilon ) \pi (x) \le  \mathbb P [X_\tau = x]  \le (1+\varepsilon )
    \pi (x),
  \end{equation}
  with $\varepsilon \le \gamma^2/(6\sigma^2)$. For $0\le k < \tau $,
  define $X^{(k)}_j= X_{k+\tau j}$, $j\ge 0$. For every $k$,
  $(X^{(k)}_j)_{j\ge 0}$ is a Markov chain with transition matrix
  $P^\tau $ and invariant distribution $\pi $. In view of
  \eqref{e:taustep}, $(X^{(k)}_j)_{j\ge 1}$ are close to being
  i.i.d.~with marginal $\pi $; the distribution of $X^{(k)}_0$ cannot be
  controlled in general.

  Writing $Y^{(k)}_n=\sum_{0\le i<(n-k)/\tau } f(X^{(k)}_i)$, with help
  of Jensen's inequality and the exponential Chebyshev bound, we have for
  every $\lambda >0$
  \begin{equation}
    \label{e:excessa}
    \mathbb P \Big[\sum_{j<n} f(X_j)\ge \gamma n\Big]\le
    \exp\big\{-\lambda \gamma n \tau^{-1} \big\}
    \frac 1 \tau
    \sum_{k<\tau } \mathbb E \big[\exp\{\lambda Y^{(k)}_n\}\big].
  \end{equation}
  Using \eqref{e:taustep}, the Markov property recursively, and the fact
  $f\le 1$ for the summand $f(X_0^{(k)})$,
  \begin{equation}
    \mathbb E\big[\exp\{\lambda  Y_n^{(k)}\}\big]\le e^\lambda
    \exp\Big\{
    \Big\lfloor \frac{n-k}{\tau }\Big\rfloor \big(\log (\pi (e^{\lambda
      f}))+\log
    (1+\varepsilon )\big)\Big\},
  \end{equation}
  for all $0\le k<\tau $. By Bennett's lemma (see e.g.
    \cite[Lemma~2.4.1]{DZ98}),
  \begin{equation}
    \pi(e^{\lambda f})\le \frac{1}{1+\sigma^2}\, e^{-\lambda \sigma^2} +
    \frac{\sigma^2}{1+\sigma^2}\, e^{\lambda }.
  \end{equation}
  Inserting this bound back into \eqref{e:excessa} and optimizing over
  $\lambda $ as in \cite[Corollary 2.4.7]{DZ98}, which amounts to choose
  \begin{equation}
    e^\lambda  =   \frac 1 {\sigma ^2}\cdot \frac{\gamma +\sigma ^2}{1-\gamma }
    \le 4,
  \end{equation}
  we obtain
  \begin{equation}
    \mathbb P \Big[\sum_{j<n} f(X_j)\ge \gamma n\Big]\le
    4 \exp\Big\{ - \Big\lfloor \frac{n}{\tau }-1\Big\rfloor
    \Big(
    H\Big(\frac{\gamma +\sigma ^2}{1+\sigma ^2}\Big|\frac{\sigma
      ^2}{1+\sigma ^2}\Big) - \log (1+\varepsilon)\Big)\Big\},
  \end{equation}
  where $H(x|p)=x\log\frac xp + (1-x)\log \frac{1-x}{1-p}$. Observing
  finally that for every $\sigma^2 \in (0,1)$ and $\gamma \in (0,\sigma^2)$
  \begin{equation}
    H\Big(\frac{\gamma +\sigma ^2}{1+\sigma ^2}\Big|\frac{\sigma
      ^2}{1+\sigma ^2}\Big) \ge  \frac
    {\gamma^2}{3\sigma ^2}
  \end{equation}
  and $\log (1+\varepsilon ) \le \varepsilon \le \gamma^2/(6\sigma^2)$,
  we obtain the claim of the theorem.
\end{proof}

For continuous-time Markov chains we have an analogous statement.

\begin{corollary}
  \label{t:appcont}
  Let $(X_t)_{t\ge 0}$ be a continuous-time Markov chain on a finite
  state space $\Sigma $ with generator $L$, initial distribution $\nu $,
  mixing time $T$, and invariant distribution $\pi $. Then for every $t>0$,
  every every function $f:\Sigma \to [-1,1]$ with $\pi (f)=0$ and
  $\pi (f^2)\le \sigma ^2$, and for $\gamma \le \sigma ^2\wedge \frac 12$
  \begin{equation}
    \mathbb P \Big[
      \int_0^t  f(X_s)\, \d s  \ge
      \gamma t  \Big]
    \le
     4
     \exp\Big\{-  \Big\lfloor\frac t {k(\gamma )T}-1\Big\rfloor
       \frac{\gamma^2}{6\sigma^2 } \Big\},
  \end{equation}
  with $k(\gamma )$ as in Theorem~\ref{t:appdisc}.
\end{corollary}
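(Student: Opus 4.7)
The plan is to reduce Corollary~\ref{t:appcont} to the discrete-time Theorem~\ref{t:appdisc} by a discretize-and-average argument. First I would set $\tau = k(\gamma) T$ and $m = \lfloor t/\tau \rfloor$, and for each offset $u \in [0,\tau)$ introduce the sampled discrete chain $\tilde X^{(u)}_j := X_{u+j\tau}$, $j\ge 0$. By the Markov property this is a discrete-time chain with transition matrix $P_\tau = e^{\tau L}$ and invariant measure $\pi$. Sub-multiplicativity of the total-variation distance (standardly, $d(s+t)\le 2d(s)d(t)$) combined with the defining inequality $\|P_T(x,\cdot)-\pi\|_{TV}\le 1/4$ of the continuous-time mixing time yields $\|P_\tau(x,\cdot)-\pi\|_{TV}\le 2^{-k(\gamma)} = \pi_\star\gamma^2/(6\sigma^2)$ uniformly in $x$, which is exactly the one-step near-stationarity estimate exploited in the proof of Theorem~\ref{t:appdisc}.

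Next I would bridge between the continuous integral and the sampled sums via Fubini,
\begin{equation*}
\int_0^{m\tau} f(X_s)\,ds \;=\; \int_0^\tau \sum_{j=0}^{m-1} f(\tilde X^{(u)}_j)\,du,
\end{equation*}
and note that the leftover boundary contribution $\int_{m\tau}^t f(X_s)\,ds$ is at most $\tau$ in absolute value since $|f|\le 1$. Jensen's inequality applied to the convex function $x\mapsto \exp(\lambda x/\tau)$ converts the integral average over $u$ into an arithmetic average:
\begin{equation*}
\exp\Bigl(\tfrac{\lambda}{\tau}\int_0^{m\tau} f(X_s)\,ds\Bigr)
\;\le\; \tfrac{1}{\tau}\int_0^\tau \exp\Bigl(\lambda\sum_{j=0}^{m-1} f(\tilde X^{(u)}_j)\Bigr)\,du.
\end{equation*}
After an exponential Chebyshev step, this reduces the whole estimate to bounding $\mathbb{E}\bigl[\exp\bigl(\lambda\sum_{j<m} f(\tilde X^{(u)}_j)\bigr)\bigr]$ uniformly in $u$.

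For each fixed $u$ that moment-generating-function bound is exactly the one already established in the proof of Theorem~\ref{t:appdisc}: iterating the Markov property peels off a factor $\pi(e^{\lambda f})(1+\varepsilon)$ per step with $\varepsilon \le \gamma^2/(6\sigma^2)$, Bennett's lemma controls $\pi(e^{\lambda f})$, and the optimal choice $e^\lambda = (\gamma+\sigma^2)/\bigl(\sigma^2(1-\gamma)\bigr) \le 4$ delivers the per-step rate $\gamma^2/(3\sigma^2) - \log(1+\varepsilon) \ge \gamma^2/(6\sigma^2)$. The floor function $\lfloor t/(k(\gamma)T) - 1\rfloor$ in the final bound absorbs simultaneously the boundary integral on $[m\tau,t]$ and the uncontrolled contribution of the initial summand $f(\tilde X^{(u)}_0)$, whose law may differ from $\pi$.

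I expect no genuine difficulty: the entire argument is a line-by-line translation of the discrete proof, the Fubini identity being the sole new ingredient. The main bookkeeping point is matching the constant $6$ in the exponent, which forces $\|P_\tau - \pi\|_{TV}$ to be bounded by precisely $\gamma^2/(6\sigma^2)$; this is the reason for the specific choice $k(\gamma) = -\log_2\bigl(\pi_\star\gamma^2/(6\sigma^2)\bigr)$ inherited from the discrete theorem.
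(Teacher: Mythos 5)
Your proof is correct, but it takes a genuinely different route from the paper's. The paper's argument is a limiting discretization: it applies Theorem~\ref{t:appdisc} as a black box to the $\delta$-skeleton $Y^\delta_n = X_{\delta n}$ with $n=\delta^{-1}t$, uses $T(\delta)=T\delta^{-1}(1+o(1))$ so that $n/(k(\gamma)T(\delta))\to t/(k(\gamma)T)$, and then lets $\delta\to 0$, invoking the a.s. convergence of the Riemann sum $\delta\sum_{j<t\delta^{-1}} f(X_{j\delta})\to\int_0^t f(X_s)\,\d s$ (which uses that $\Sigma$ is finite, hence the jump rates are bounded). You instead work directly at the fixed scale $\tau=k(\gamma)T$ and replace the finite average over offsets $k<\tau$ in the proof of Theorem~\ref{t:appdisc} by the continuum Fubini identity $\int_0^{m\tau} f(X_s)\,\d s = \int_0^\tau \sum_{j<m} f(X_{u+j\tau})\,\d u$ together with Jensen over $u\in[0,\tau)$. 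This avoids any passage to the limit and is arguably cleaner and more self-contained. The trade-off is modularity: your argument requires re-opening the proof of the discrete theorem to extract the per-block moment-generating-function bound (since the sampled chain $\tilde X^{(u)}$ has transition matrix $e^{\tau L}$, which already mixes in one step, so Theorem~\ref{t:appdisc} cannot simply be invoked with the right constants), whereas the paper reuses Theorem~\ref{t:appdisc} wholesale. Both yield the stated bound; one small bookkeeping point in your version is that $k(\gamma)$ need not be an integer, so the submultiplicativity bound on $\|P_\tau(x,\cdot)-\pi\|_{TV}$ implicitly needs a ceiling, but this is a cosmetic issue shared with the paper's discrete proof.
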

\begin{proof}
  The proof is a discretization argument: Consider a discrete-time Markov
  chain $Y^\delta_{n} = X_{\delta n}$. The mixing time $T(\delta )$ of
  $Y^\delta $ satisfies $T(\delta )=T \delta^{-1}(1+o(1))$ as
  $\delta \to 0$. The previous theorem applied with $n=\delta^{-1} t$,
  then implies
  \begin{equation}
    \mathbb P \Big[\delta \sum_{j< t \delta^{-1}} f(X_{j\delta })\ge \gamma
      t\Big]
    \le 4 \exp\Big\{-  \Big\lfloor\frac t {k(\gamma )T}-1\Big\rfloor
      \frac{\gamma^2}{ 6\sigma^2 } \Big\}.
  \end{equation}
  Taking $\delta \to 0$ and using the fact that $\Sigma $ is finite (that
    is the transition rates are bounded from below) yields the claim.
\end{proof}

Finally, let $h:\Sigma \to \mathbb R$ be an arbitrary function such that
$\Var_\pi (h)\le \sigma ^2$. Set
\begin{equation}
  f = (h - \pi(h))/ 2 \| h \|_\infty,
\end{equation}
so that $\|f\|_\infty \le 1$, $\pi(f)=0$ and
$\pi (f^2) \le \sigma ^2/(4 \|h\|_\infty^2)$. The corollary applied with
$\gamma = \delta \pi(h)/2 \| h \|_\infty$ then directly implies
\begin{equation}
  \label{e:apph}
  \mathbb P \Big[\int_0^t h(X_s)\, \d s - t \pi (h)
    \ge \delta t \pi (h)\Big]
  \le 4 \exp\Big\{-  \Big\lfloor\frac t {k'(\delta  )T}-1\Big\rfloor
    \frac{\delta^2 \pi (h)^2}{6 \sigma^2 } \Big\}
\end{equation}
with
\begin{equation}
  k'(\delta ) = -\log_2\big(\delta^2  \pi (h)^2 \pi_\star/(6 \sigma^2)\big)
\end{equation}
whenever
\begin{equation}
  \label{e:apphcond}
  \delta \le \frac{\sigma^2}{2 \pi (h) \|h\|_\infty} \wedge 1.
\end{equation}


\end{document}